\numberwithin{equation}{section}
\numberwithin{figure}{section}
\newtheorem{theorem}{Theorem}[section]
\newenvironment{proof}{{ \it Proof:\quad}}{\hfill $\blacksquare$\par}
\newtheorem{lemma}{Lemma}[section]
\newtheorem{remark}{Remark}[section]
\newtheorem{proposition}{Proposition}[section]
\newtheorem{corollary}{Corollary}[section]
\newtheorem{definition}{Definition}[section]
\newcommand{\f}{\frac}
\newcommand{\lf}{\left}
\newcommand{\rg}{\right}
\title{Stability of the phase separation state for compressible Navier-Stokes/Allen-Cahn system}
\author{Yazhou Chen\thanks{Department of Mathematics, College of Mathematics and Physics, Beijing University
		of Chemical Technology, Beijing 100029,
		P R  China, (chenyz@mail.buct.edu.cn).}
	\and Hakho Hong\thanks{Institute of Mathematics, State Academy of Sciences, Pyongyang,
		D P R Korea (hhhong@star-co.net.kp and hhong@amss.ac.cn).}
	\and Xiaoding Shi\thanks{Corresponding author. Department of Mathematics,  College of Mathematics and Physics, Beijing University of Chemical Technology, Beijing 100029,
		P R  China (shixd@mail.buct.edu.cn) }
}
\date{}
\begin{document}
	\maketitle

\begin{abstract}
This paper is concerned with the large time behavior of the Cauchy problem for Navier-Stokes/Allen-Cahn system  describing the interface motion of immiscible two-phase flow in 3-D. The existence and uniqueness of global solutions and the stability of the phase separation state is proved under the small initial perturbations. Moreover, the optimal time decay rates are obtained for higher-order spatial derivatives of density, velocity and phase. Our results implies that if the immiscible two-phase flow is initially located near the phase separation state, then under small perturbation conditions, the solution exists globally and decays algebraically to the complete separation state of the two-phase flow, that is,  there will be no interface fracture, vacuum, shock wave, mass concentration at any time, and the interface thickness tends to zero as the time $t\rightarrow+\infty$.
	
	\vspace{.20cm}\noindent\textbf{MSC 2020:}
35B40, 35B65, 35L65, 76N05, 76N10, 76T10.
	
	\vspace{.20cm} \noindent\textbf{Keywords:} Navier-Stokes/Allen-Cahn	system, existence, uniqueness, large time behavior, decay rate.
\end{abstract}


\section{Introduction}
\setcounter{equation}{0}
\hspace{2em}Two-phase flows or multi-phase flows are important in many industrial applications,
for instance, in aerospace, chemical engineering, micro-technology and so on. They have
attracted studies from many   engineers, geophysicists and astrophysicists. In this paper, we study a diffusive interface model which is coupled with the  compressible barotropic Navier-Stokes equations and Allen-Cahn equation, called the compressible  barotropic Navier-Stokes/Allen-Cahn system as follows (see \cite{HMR 12}):
\begin{equation}\label{h11}\left\{\begin{aligned}
&\displaystyle\rho_{t}+{\rm div}(\rho \mathbf{u})= 0, \\
&\displaystyle\lf(\rho \mathbf{u}\rg)_t+{\rm div}(\rho \mathbf{u}\otimes \mathbf{u})
={\rm div}\mathbb{T},\\
&\displaystyle\lf(\rho \phi\rg)_t+{\rm div}(\rho \mathbf{u}\phi)
=-\mu,\\
&\displaystyle\rho\mu=\rho\f{\partial f}{\partial\phi}-{\rm div}\lf(\rho\f{\partial f}{\partial\nabla\phi}\rg),
\end{aligned}\right.
\end{equation} where $\mathbf{x}=(x_1,x_2,x_3)\in  \mathbb{R}^3$ represents the spatial variable, $t>0$ represents the time variable, $\mathrm{div}$ and $\nabla$ are the divergence operator and gradient operator respectively. $\rho=\rho(\mathbf{x},t), \mathbf{u}=\mathbf{u}(\mathbf{x},t)=(u_1, u_2, u_3)(\mathbf{x},t)$  and $\phi=\phi(\mathbf{x},t)$ denote the density,
the velocity and the concentration difference of the mixture fluids respectively, and while $\mu$ is the chemical potential, $f$ is the phase-phase interfacial free energy density, here we consider its
common form as following (see Lowengrub-Truskinovsky \cite{LT 98},  Heida-M\'alek-Rajagopal \cite{HMR 12})
\begin{equation}\label{h12}
f(\rho, \phi, \nabla\phi)=\f{1}{4\epsilon}\lf(\phi^2-1\rg)^2+\f{\epsilon}{2\rho}|\nabla\phi|^2,
\end{equation}where $\epsilon>0$ is the thickness of the interface between the phases. The Cauchy stress-tensor $\mathbb{T}$
is represented by
\begin{equation}\label{h13}
\mathbb{T}=\nu \lf(\nabla \mathbf{u}+\nabla^\top \mathbf{u}\rg)+\lambda {\rm div}\mathbf{u}\mathbb{I}-P\mathbb{I}-\rho\nabla\phi\otimes\f{\partial f}{\partial\nabla\phi},
\end{equation}where $\mathbb{I}$ is the unit matrix, $\top$ represents the transpose of a matrix, and $\nu, \lambda$ are viscosity coefficients satisfying
\begin{equation}\label{h14}
\nu>0, \quad \lambda+\f{2}{3}\nu\geq 0.
\end{equation}
The total pressure is
\begin{equation}\label{h15}
P=p(\rho)-\f{\epsilon}{2}|\nabla\phi|^2,
\end{equation} which implies the sum of fluid pressure and capillary pressure. In this paper, we assume that the pressure $p$ in \eqref{h15} is the smooth function of $\rho$, and it holds that
\begin{equation}\label{h110}
p'(\rho)>0 \quad  \text{for any}\,\,\,\rho>0.
\end{equation}
Substituting \eqref{h12}, \eqref{h13} and \eqref{h15}  into \eqref{h11}, then \eqref{h11} is simplified  as
\begin{equation}\label{h17}\left\{\begin{aligned}
&\displaystyle\rho_{t}+{\rm div}(\rho \mathbf{u})= 0, \\
&\displaystyle\begin{aligned}\rho \mathbf{u}_t+\rho (\mathbf{u}\cdot\nabla) \mathbf{u}&+\nabla p(\rho)
=\nu\Delta\mathbf{u}+(\nu+\lambda)\nabla{\rm div}\mathbf{u}\\
&\displaystyle-\epsilon{\rm div}\big( \nabla\phi\otimes\nabla\phi -\f{1}{2}|\nabla\phi|^2\mathbb{I}\big),\end{aligned}\\
&\rho \phi_t+\rho \mathbf{u}\cdot\nabla\phi
=-\mu, \\
&\displaystyle\rho\mu=\f{\rho}{\epsilon}\lf(\phi^3-\phi\rg)-\epsilon\Delta\phi.
\end{aligned}\right.
\end{equation}

\begin{remark}
The  phase function $\phi$ is introduced to identify the two fluids ($\{ \mathbf{x}:\phi(\mathbf{x},t) = 1\}$ is occupied by fluid 1 and $\{ \mathbf{x}:\phi(\mathbf{x},t) = -1\}$ by fluid 2). Physically, the function $\phi$ and the total density are constructed as follows: for compressible immiscible two-phase flow, Take any volume element $V$ in the flow, $M_i$ is assumed to be the mass of the components in the representative material volume $V$, $\phi_i=\frac{\rho_i}{\rho}$ the mass concentration, $\rho_i=\frac{M_i}{V}$  the apparent mass density of the fluid $i~(i=1,2)$. The total density is given by $\rho=\rho_1+\rho_2$, and the difference of the two components for the fluid mixture $\phi=\phi_1-\phi_2$. We also call $\phi$ the phase function or phase field. Obviously, $\phi$  describes the distribution of the interface.
\end{remark}

\vspace{0.2cm} During the past decade, the mathematical study for the Navier-Stokes/Allen-Cahn system has been extensively
studied.
For a diffuse interface model of two viscous
fluids which lead to the  incompressible Navier-Stokes/Allen-Cahn system,
a first result  on existence of
axisymmetric solutions was obtained by Xu-Zhao-Liu \cite{XZL 10}. In the case with matched density, Zhao-Guo-Huang \cite{ZGH 11} studied the existence of weak solution in 3D, well-posedness of
strong solution in 2D and the vanishing
viscosity limit. Also, we refer to \cite{GG 10-1, GG 10-2, M 12} for  the asymptotic behavior and attractors. Recently, Favre-Schimperna \cite{FS 19} showed the existence of weak solution in 3D and well-posedness of
strong solution in 2D on a incompressible model with inertial effects. For the incompressible model with different densities, Li-Ding-Huang \cite{LDM 16} established a blow-up criterion
for  strong solutions and  Li-Huang \cite{LM 18} studied the existence and uniqueness of local strong solutions in 3D case. Moreover, by using an energetic variational approach, Jiang-Li-Liu \cite{JLL 17} derived a different model of Navier-Stokes/Allen-Cahn, then proved the existence of weak solutions in 3D,
the well-posedness of strong solutions in 2D, and studied the long time behavior of the strong solutions.

For the initial boundary value problem of the compressible barotropic model \eqref{h17}  in 3-D bounded domain,
Feireisl-Petzeltov\'a-Rocca-Schimperna \cite{FPRS 10} developed a rigorous existence theory  based on the concept of weak solution for the compressible Navier-Stokes system
introduced by Lions for the adiabatic exponent of pressure $\gamma>6$. This result
was recently extended to $\gamma>2$ by Chen-Wen-Zhu \cite{CWZ 19} . Freist\"uler \cite{F 14} showed that the possibility of traveling waves corresponding to phase boundaries arises during a phase transition at a critical temperature under natural assumptions.
For the problem obtained by linearizing the system \eqref{h17} around a traveling waves, Kotschote \cite{K 17} proved the results on local well-posedness and a detailed description of the point and essential spectrum.
For the Cauchy problem of the 3-D compressible Navier-Stokes/Allen-Cahn system, Zhao \cite{Zhaoxp} studied the global well-posedness and time-decay rates of solutions via a refined pure energy method, in which the equilibrium of the concentration difference $\phi$ takes the value $0$.
For the 1-D problem of the system \eqref{h17} in bounded interval,  Ding-Li-Luo \cite{DLL 13} proved the existence and uniqueness of global classical solution, the existence
of weak solutions and the existence of unique strong solution for initial
data $\rho_0$ without vacuum states.
Later,  Chen-Guo \cite{CG 17}  established the global existence and uniqueness of
strong and classical solutions  by using the
energy estimates, when the initial vacuum is allowed. Recently, Ding-Li-Tang \cite{DLT 19}  obtained the similar result as in \cite{DLL 13} for the problem with free
boundary. Also,
Luo-Yin \cite{LY 18}  and Yin-Zhu \cite{YZ 19} proved the asymptotic stability toward the rarefaction wave of Cachy problem  in 1-D, and the combination of stationary solution and rarefaction wave for the initial boundary value problem in half line, respectively.

On the other hand, for the full compressible model, Kotschote \cite{K 12}  derived the more general model,  and  proved the existence and uniqueness of
local strong solutions on a problem with a mixed boundary condition in bounded domain.
Recently, Chen-He-Huang-Shi	\cite{CHFS 20-1, CHFS 20-2}  studied the global strong solutions for the 1-D Cauchy problem and initial boundary value problem, respectively.
	

In this paper, we focus on the existence of the global smooth solutions and the stability of the phase separation state for the Cauchy problem of the system  \eqref{h17} in $\mathbb{R}^3$ with the initial condition
\begin{equation}\label{h19}
(\rho, \mathbf{u}, \phi)(\mathbf{x},0)=(\rho_0, \mathbf{u}_0, \phi_0)(\mathbf{x}),\quad \mathbf{x}\in \mathbb{R}^3.\end{equation}
As the space variable tends to infinity, we assume
\begin{equation}\label{hh19}
		\lim_{|\mathbf{x}|\rightarrow\infty}(\rho_0, \mathbf{u}_0)(\mathbf{x})= (\bar{\rho}, 0),\quad\text{and}\quad  \lim_{|\mathbf{x}|\rightarrow\infty} |\phi_0(\mathbf{x})|= 1,
\end{equation}
where  $\bar{\rho}>0$  is the given positive constant.

\begin{remark}
Compared with the results in \cite{Zhaoxp} about the global well-posedness and time-decay rates of solutions near the equilibrium state of the concentration difference of the mixture fluids $\phi=0$, we give the global solution and the similar time-decay rates near the equilibrium state of $|\phi|=1$, which represents the state of phase separation. More precisely, the condition $\eqref{hh19}$ can be used to describe immiscible two-phase flows in different phase fields when $\mathbf{x}\rightarrow\pm\infty$, so it can be used to describe the phenomenon of the phase separation in immiscible two-phase flow. From this perspective, the condition $\eqref{hh19}$ can be seen as an essential improvement on the condition (13) in \cite{LY 18} and (1.15) in \cite{YZ 19} which can be only described the disturbance near single-phase flow.
\end{remark}

Finally, we point out there are some mathematical results for the compressible Navier-Stokes-Cahn-Hilliard system which is another diffusive interface model  describing the motion of a
mixture of two compressible viscous
fluids (see  \cite{AF 08, KZ 15, CHMS 18}  and references therein).

\vspace{0.2cm}\noindent\textbf{Notation.} In this paper,
$L^p(\mathbb{R}^3)$ and $W_p^k(\mathbb{R}^3)$ denote the usual Lebesgue and Sobolev spaces on $\mathbb{R}^3$, with norms  $\|\cdot\|_{L^p}$ and $\|\cdot\|_{W_p^k}$, respectively. When $p=2$, we denote $W_p^k(\mathbb{R}^3)$ by $H^k(\mathbb{R}^3)$ with  the norm  $\|\cdot\|_{H^k}$ and $\|\cdot\|_{H^0}=\|\cdot\|$ will be used to denote the usual
$L^2-$norm.  The notation $\|(A_1,A_2, \cdots, A_l)\|_{H^k}$ means the summation of $\|A_i\|_{H^k}$ from $i=1$ to $i=l$. For an integer $m$, the symbol $\nabla^m$ denotes the summation of all terms $D^{\alpha}$ with the multi-index $\alpha$
satisfying $|\alpha|=m$. We use $C, c$ to denote the constants which are
independent of $\mathbf{x},t$
and may change from line to line. We also omit the spatial domain
$\mathbb{R}^3$ in integrals for convenience.
For $d\times d$-matrices $F, H$, denote $
F:H=\sum_{i,j=1}^{d}F_{ij}H_{ij}$, $|F|\equiv(F:F)^{1/2}$.  For vectors $a$ and $b$, we denote their tensor product by $a\otimes b:=(a_i
b_j)_{d\times d}$.  We will employ the notation $a\lesssim b$ to mean that $a\leq  Cb$ for a universal constant $C>0$ that only depends on the parameters coming from the problem.

\vspace{0.2cm}In order to establish the negative Sobolev estimates, we should review the following useful
results. To this end, let us first introduce the following necessary definition.
\begin{definition}\label{def21}
	For $s\in\mathbb{R}$, $\dot{H}^s(\mathbb{R}^3)$ is defined as the homogeneous Sobolev space of $f,$ with the following norm:
	$$
	\|f\|_{\dot{H}^{s}}=\|\Lambda^s f\|,
	$$where $\Lambda^s$ is defined by
	$$(\Lambda^s f)(\mathbf{x})=\f{1}{(2\pi)^3}\int_{\mathbb{R}^3}|\xi|^s \hat{f}(\xi)e^{2\pi i \mathbf{x}\cdot \xi}d\xi, $$ where $\hat{f}(\xi)$ is the Fourier transform of $f$,
	$\hat{f}(\xi)\overset{\mathrm{def}}= \displaystyle \int_{\mathbb{R}^3}f(\mathbf{x})e^{-2\pi i \mathbf{x}\cdot \xi} d\mathbf{x}.$
\end{definition}

Moreover, we need the following standard results which will be used extensively in our estimates.
\begin{lemma} \label{lem21} (Gagliardo-Nirenberg inequality, \cite{N 59} or \cite[Lemma A.1]{W 12})
	Let $l,s$ and $k$ be any real numbers satisfying $0\leq
	l,s<k$, and let  $p, r, q \in [1,\infty]$ and $\f{l}{k}\leq
	\theta\leq 1$ such that
	$$\frac{l}{3}-\frac{1}{p}=\lf(\frac{s}{3}-\frac{1}{r}\rg)(1-\theta)+\lf(\frac{k}{3}-\frac{1}{q}\rg)\theta.
	$$ Then, for any $u\in W_q^k(\mathbb{R}^3),$  we have
	\begin{equation}
	\label{h20}
	\|\nabla^l u\|_{L^p}\lesssim \|\nabla^s u\|_{L^r}^{1-\theta}\|\nabla^k
	u\|_{L^q}^{\theta}.
	\end{equation}	
\end{lemma}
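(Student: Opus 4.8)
The plan is to recognize this as the classical Gagliardo--Nirenberg interpolation inequality and to organize the proof around frequency localization, with scaling as the guiding principle. First I would note that the dimensional balance condition is precisely the homogeneity relation that makes \eqref{h20} scale-invariant: replacing $u$ by $u_\lambda(\mathbf{x})=u(\lambda\mathbf{x})$ multiplies $\|\nabla^m u\|_{L^a}$ by $\lambda^{m-3/a}$, and equating the powers of $\lambda$ on the two sides of \eqref{h20} reproduces exactly the stated identity linking $l,s,k,p,r,q$ and $\theta$. This confirms that the hypothesis cannot be weakened and dictates how the two factors must be balanced.

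Second, I would perform a homogeneous Littlewood--Paley decomposition $u=\sum_{j\in\mathbb{Z}}\dot\Delta_j u$, where $\dot\Delta_j$ localizes the frequency to the annulus $|\xi|\sim 2^j$; this is legitimate because $u\in W_q^k(\mathbb{R}^3)$ furnishes the requisite decay and the series converges modulo polynomials. On each dyadic block the Bernstein inequalities (together with the boundedness of $\dot\Delta_j$ on every $L^a$, its kernel being in $L^1$) yield two complementary bounds,
$$\|\nabla^l\dot\Delta_j u\|_{L^p}\lesssim 2^{j\alpha}\,\|\nabla^s\dot\Delta_j u\|_{L^r},\qquad \|\nabla^l\dot\Delta_j u\|_{L^p}\lesssim 2^{j\beta}\,\|\nabla^k\dot\Delta_j u\|_{L^q},$$
with $\alpha=l-s+3(\tfrac1r-\tfrac1p)$ and $\beta=l-k+3(\tfrac1q-\tfrac1p)$. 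The constraints $0\le l,s<k$ and $\tfrac{l}{k}\le\theta\le1$, via the balance identity, force $\alpha$ and $\beta$ to have opposite signs; for definiteness take $\alpha>0>\beta$ (the roles interchange when $\tfrac{k}{3}-\tfrac1q<\tfrac{s}{3}-\tfrac1r$). Thus the first bound controls low frequencies by $\|\nabla^s u\|_{L^r}$ and the second controls high frequencies by $\|\nabla^k u\|_{L^q}$.

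Third, I would split the sum at an integer threshold $N$, estimate $\sum_{j\le N}$ by the first (geometrically convergent) bound and $\sum_{j>N}$ by the second, to obtain $\|\nabla^l u\|_{L^p}\lesssim 2^{N\alpha}\|\nabla^s u\|_{L^r}+2^{N\beta}\|\nabla^k u\|_{L^q}$, and then optimize in $N$. Minimizing the right-hand side over real $N$ produces the product $\|\nabla^s u\|_{L^r}^{1-\theta}\|\nabla^k u\|_{L^q}^{\theta}$ with interpolation exponent $\theta=\alpha/(\alpha-\beta)$. Writing $L=\tfrac{l}{3}-\tfrac1p$, $S=\tfrac{s}{3}-\tfrac1r$, $K=\tfrac{k}{3}-\tfrac1q$, one checks $\alpha=3(L-S)$, $\beta=3(L-K)$, so that $\theta=(L-S)/(K-S)$, which is exactly the $\theta$ singled out by the balance condition; this closes the estimate on the generic range of exponents.

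The main obstacle I anticipate is the behavior at the endpoints and critical thresholds. The geometric summation and Bernstein estimates above are transparent whenever $\alpha>0>\beta$ strictly, i.e.\ for $0<\theta<1$ with $S\ne K$, but the degenerate cases $\theta=1$ (a pure embedding $\|\nabla^l u\|_{L^p}\lesssim\|\nabla^k u\|_{L^q}$), $\theta=l/k$, and the borderline exponents $p,r,q\in\{1,\infty\}$ make one of the series fail to converge and must be handled separately---by substituting BMO or Besov norms at the $L^\infty$ endpoint, or by Nirenberg's original one-dimensional slicing combined with H\"older's inequality. Since the result is quoted from \cite{N 59} and \cite[Lemma A.1]{W 12}, I would invoke those references for the endpoint bookkeeping and present the frequency-localization argument above as the conceptual core.
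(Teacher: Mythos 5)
The paper offers no proof of Lemma \ref{lem21} at all: it is quoted as a known result from Nirenberg \cite{N 59} and from \cite[Lemma A.1]{W 12}, so there is no internal argument to compare against, and the question is only whether your sketch would stand on its own. In substance it would, for the generic range of exponents: the scaling computation correctly identifies the balance condition as the homogeneity constraint, the two Bernstein bounds with exponents $\alpha=3(L-S)$ and $\beta=3(L-K)$ are the right complementary estimates, the sign condition $\alpha>0>\beta$ is indeed equivalent to $0<\theta<1$, and optimizing the splitting frequency $N$ reproduces exactly $\theta=(L-S)/(K-S)$, which matches the $\theta$ of the statement. Two points need more care than the sketch gives them. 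First, the Bernstein inequality $\|\dot\Delta_j f\|_{L^p}\lesssim 2^{3j(1/r-1/p)}\|\dot\Delta_j f\|_{L^r}$ holds only for $p\ge r$ (one cannot lower the integrability of a frequency-localized function on all of $\mathbb{R}^3$), so the assertion that both complementary bounds are available for every admissible triple $(p,r,q)$ is not automatic; it happens to be harmless for every invocation of \eqref{h20} in this paper, where one always interpolates an $L^p$ norm with $p\in\{2,3,6,3/s,\infty\}$ between $L^2$-based quantities, but a self-contained proof of the lemma as stated would have to treat $p<\max(r,q)$ by a different device (e.g.\ Nirenberg's slicing). Second, the triangle-inequality summation over dyadic blocks and the geometric convergence of the two half-sums genuinely fail at $\theta=0$, $\theta=1$ and at the $L^1,L^\infty$ endpoints; you acknowledge this and defer to \cite{N 59} and \cite{W 12}, which is acceptable given that the paper itself treats the lemma as a black box, but it means your argument is a proof of the open-range case only, not of the full statement.
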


\begin{lemma} \label{lem22}  (\cite[Lemma 2.5]{PG 16})  Let $f(\varphi)$ and $f(\sigma, w)$ be  smooth functions of $\varphi$ and $(\varphi, w)$, respectively, with bounded
	derivatives of any order, and $\|\varphi\|_{L^\infty(\mathbb{R}^3)}\lesssim 1$. Then for any integer $m\geq 1$, we have
	 	\begin{equation}
	 \label{hh20}\begin{aligned}
	 &	\|\nabla^m f(\varphi)\|_{L^p}\leq C \|\nabla^m \varphi\|_{L^p},
	 \\
	 &\|\nabla^m f(\varphi,w)\|_{L^p}\leq C  \|\nabla^m (\varphi,w)\|_{L^p},
	 \end{aligned}
\end{equation} for any $1\leq p\leq \infty$, where $C$	may depend on $f$ and $m$.	
\end{lemma}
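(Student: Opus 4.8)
The plan is to reduce both estimates to a single Moser-type product bound, obtained by expanding the derivatives of the composite function through the generalized chain rule (Fa\`a di Bruno formula) and then interpolating each resulting factor with the Gagliardo-Nirenberg inequality of Lemma \ref{lem21}. I would treat the scalar case $f(\varphi)$ first, since the two-variable case is structurally identical once the multivariate chain rule replaces the scalar one.

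First I would expand $\nabla^m f(\varphi)$ as a finite sum of terms of the form
$$f^{(N)}(\varphi)\,\prod_{j=1}^{N}\nabla^{m_j}\varphi,\qquad \sum_{j=1}^{N} m_j=m,\quad m_j\geq 1,\quad 1\leq N\leq m.$$
Because $f$ has bounded derivatives of every order and $\|\varphi\|_{L^\infty}\lesssim 1$, each prefactor obeys $|f^{(N)}(\varphi)|\leq C$ pointwise, so it suffices to bound $\big\|\prod_{j}\nabla^{m_j}\varphi\big\|_{L^p}$ by $\|\nabla^m\varphi\|_{L^p}$ for every admissible multi-index. The case $N=1$ (so $m_1=m$) is immediate, since $\|f'(\varphi)\nabla^m\varphi\|_{L^p}\leq\|f'\|_{L^\infty}\|\nabla^m\varphi\|_{L^p}$. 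For $N\geq 2$, where each $m_j<m$, I would pick H\"older exponents $p_j$ defined by $\f{1}{p_j}=\f{m_j}{mp}$, which indeed satisfy $\sum_j\f{1}{p_j}=\f{1}{p}$, so that $\big\|\prod_j\nabla^{m_j}\varphi\big\|_{L^p}\leq\prod_j\|\nabla^{m_j}\varphi\|_{L^{p_j}}$ by H\"older. On each factor I would invoke Lemma \ref{lem21} with parameters $(l,s,k,r,q,\theta)=(m_j,0,m,\infty,p,\theta_j)$ where $\theta_j:=\f{m_j}{m}$; the defining exponent relation of Lemma \ref{lem21} then collapses exactly to $\f{1}{p_j}=\f{m_j}{mp}$, and the admissibility constraints $0\leq m_j<m$ and $\f{m_j}{m}\leq\theta_j\leq 1$ hold (with equality on the left). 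This yields $\|\nabla^{m_j}\varphi\|_{L^{p_j}}\lesssim\|\varphi\|_{L^\infty}^{1-\theta_j}\|\nabla^m\varphi\|_{L^p}^{\theta_j}$. Multiplying over $j$ and using $\sum_j\theta_j=1$ together with $\sum_j(1-\theta_j)=N-1$ and $\|\varphi\|_{L^\infty}\lesssim 1$ gives $\big\|\prod_j\nabla^{m_j}\varphi\big\|_{L^p}\lesssim\|\nabla^m\varphi\|_{L^p}$, and summing the finitely many terms establishes the first inequality.

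For the two-variable estimate I would run the identical scheme with the multivariate Fa\`a di Bruno formula, writing $\nabla^m f(\varphi,w)$ as a finite sum of terms $\partial_\varphi^{a}\partial_w^{b} f(\varphi,w)\,\prod_i\nabla^{m_i}\varphi\,\prod_l\nabla^{n_l}w$ with $\sum_i m_i+\sum_l n_l=m$. The joint derivative of $f$ is again bounded pointwise, and each differential factor is interpolated by Lemma \ref{lem21} between its $L^\infty$ norm and the top-order norm $\|\nabla^m(\varphi,w)\|_{L^p}$, with H\"older exponents assembled so that their reciprocals again total $\f{1}{p}$.

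The step I expect to be the main obstacle is the interpolation bookkeeping in this two-variable expansion: one must check that the exponents drawn from all the $\varphi$- and $w$-factors simultaneously sum to $\f{1}{p}$, that every factor lands in the admissible range of Lemma \ref{lem21}, and that the endpoint cases $p=1$ and $p=\infty$ (where all $p_j=p$ and the Gagliardo-Nirenberg step degenerates to a direct $L^\infty$/top-order interpolation) are handled uniformly. A subtler point is that the lower-order $w$-factors produce powers of $\|w\|_{L^\infty}$; these are harmless provided $w$ is likewise controlled in $L^\infty$, which is the standing situation in every application of the lemma (where $(\varphi,w)$ is a small bounded perturbation), so that such powers are absorbed into the constant $C$. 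Once these verifications are in place, the sum of finitely many bounded terms yields $\|\nabla^m f(\varphi,w)\|_{L^p}\leq C\|\nabla^m(\varphi,w)\|_{L^p}$, completing the proof.
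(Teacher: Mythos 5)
The paper does not actually prove this lemma --- it is quoted verbatim from \cite[Lemma 2.5]{PG 16} --- and your argument is precisely the standard Moser-type proof underlying the cited result (Fa\`a di Bruno expansion, H\"older with exponents $1/p_j=m_j/(mp)$, then Gagliardo--Nirenberg interpolation of each factor between $\|\varphi\|_{L^\infty}$ and $\|\nabla^m\varphi\|_{L^p}$ with $\theta_j=m_j/m$, the exponents summing correctly since $\sum_j m_j=m$); it is correct, including at the endpoints $p=1,\infty$. Your side remark about the two-variable case is also accurate: the interpolation produces powers of $\|w\|_{L^\infty}$, so the hypothesis as printed should be read as $\|(\varphi,w)\|_{L^\infty(\mathbb{R}^3)}\lesssim 1$, which is how the lemma is used throughout the paper.
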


By the Parseval theorem and H\"older's inequality, it is easy to check  the following result (see \cite{W 12}).
\begin{lemma} \label{lem24}
	Let $s\geq 0$ and $l\geq 0$. Then, we have
	\begin{equation}
\label{hg24}\|\nabla^lf\|\lesssim\|\nabla^{l+1}f\|^{1-\theta}\|f\|^{\theta}_{\dot{H}^{-s}},\quad \text{with}\,\,\,\theta=\f{1}{l+s+1}.\end{equation}
\end{lemma}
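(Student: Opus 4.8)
The plan is to move everything to the frequency side via the Parseval theorem and then interpolate the frequency weight $|\xi|^{2l}$ between the two weights $|\xi|^{2(l+1)}$ and $|\xi|^{-2s}$ by a single application of H\"older's inequality. First, using the Fourier convention of Definition \ref{def21}, the relation $\widehat{\nabla f}(\xi)=2\pi i\xi\,\hat f(\xi)$, and Parseval's identity $\|g\|=\|\hat g\|$, I would record (up to harmless powers of $2\pi$ and dimensional constants coming from the sum over multi-indices)
$$
\|\nabla^l f\|^2\simeq\int_{\mathbb{R}^3}|\xi|^{2l}|\hat f(\xi)|^2\,d\xi,\qquad
\|\nabla^{l+1} f\|^2\simeq\int_{\mathbb{R}^3}|\xi|^{2(l+1)}|\hat f(\xi)|^2\,d\xi,
$$
together with the exact identity $\|f\|_{\dot H^{-s}}^2=\int_{\mathbb{R}^3}|\xi|^{-2s}|\hat f(\xi)|^2\,d\xi$ furnished by Definition \ref{def21}. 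These constants are absorbed into the symbol $\lesssim$.

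Next I would look for exponents $a,b\ge 0$ with $a+b=1$ realizing the pointwise factorization
$$
|\xi|^{2l}|\hat f|^2=\big(|\xi|^{2(l+1)}|\hat f|^2\big)^{a}\,\big(|\xi|^{-2s}|\hat f|^2\big)^{b}.
$$
Matching the power of $|\xi|$ forces $2(l+1)a-2sb=2l$, which together with $a+b=1$ yields
$$
a=\frac{l+s}{l+s+1}=1-\theta,\qquad b=\frac{1}{l+s+1}=\theta.
$$
Since $l,s\ge 0$, both exponents lie in $[0,1]$, so $1/a$ and $1/b$ form a conjugate H\"older pair, and the choice reproduces exactly the stated $\theta=1/(l+s+1)$.

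Applying H\"older's inequality with these exponents to the frequency integral then gives
$$
\int|\xi|^{2l}|\hat f|^2\,d\xi\le\Big(\int|\xi|^{2(l+1)}|\hat f|^2\,d\xi\Big)^{1-\theta}\Big(\int|\xi|^{-2s}|\hat f|^2\,d\xi\Big)^{\theta},
$$
that is, $\|\nabla^l f\|^2\lesssim\big(\|\nabla^{l+1} f\|^2\big)^{1-\theta}\big(\|f\|_{\dot H^{-s}}^2\big)^{\theta}$; taking square roots yields \eqref{hg24}. There is no serious obstacle here: the only points requiring care are the elementary algebra that pins down $a$ and $b$ and the verification that these exponents remain admissible for all $l,s\ge 0$, including the degenerate endpoint $l=s=0$ (where $\theta=1$ and the inequality collapses to the identity $\|f\|=\|f\|_{\dot H^{0}}$). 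Implicit throughout is the assumption $f\in\dot H^{-s}$, which guarantees that $\int|\xi|^{-2s}|\hat f|^2\,d\xi<\infty$ so that the right-hand side is meaningful.
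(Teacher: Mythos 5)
Your argument is correct and is precisely the one the paper intends: the lemma is stated there with the one-line justification ``by the Parseval theorem and H\"older's inequality,'' and your frequency-side factorization of $|\xi|^{2l}|\hat f|^2$ with exponents $a=1-\theta$, $b=\theta$ solving $2(l+1)a-2sb=2l$, $a+b=1$ is exactly that computation carried out in full. Nothing further is needed.
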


If $s\in (0, 3)$, $\Lambda^{-s}g$ is the Riesz potential. Then, we have the following $L^p$ type inequality by the Hardy-Littlewood-Sobolev theorem (see \cite[pp. 119, Theorem 1]{S 70}):
\begin{lemma} \label{lem25}
	Let $0<s<3, 1<p<q<\infty$ and $\f{1}{q}+\f{s}{3}=\f{1}{p}$. Then, we have
		\begin{equation}
	\label{hg25}\|\Lambda^{-s}f\|_{L^q}\lesssim\|f\|_{L^p}.\end{equation}
\end{lemma}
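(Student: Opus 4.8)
The plan is to realize $\Lambda^{-s}$ as the Riesz potential and then run the classical Hardy--Littlewood--Sobolev argument based on the Hardy--Littlewood maximal function. Since $0<s<3$, the Fourier multiplier $|\xi|^{-s}$ corresponds on $\mathbb{R}^3$ to convolution against the kernel $c_s|\mathbf{x}|^{-(3-s)}$, so that $\Lambda^{-s}f(\mathbf{x})=c_s\int_{\mathbb{R}^3}|\mathbf{x}-\mathbf{y}|^{-(3-s)}f(\mathbf{y})\,d\mathbf{y}$. By homogeneity it suffices to treat $\|f\|_{L^p}=1$. First I would split the kernel at an as-yet-undetermined radius $R>0$, writing $\Lambda^{-s}f=I_1+I_2$, where $I_1$ is the integral over $\{|\mathbf{y}|<R\}$ (after the change of variable $\mathbf{y}\mapsto\mathbf{x}-\mathbf{y}$) and $I_2$ the integral over $\{|\mathbf{y}|\geq R\}$.

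For the local part $I_1$ I would decompose the annuli $\{2^{-j-1}R\leq|\mathbf{y}|<2^{-j}R\}$ dyadically; on each annulus the kernel is comparable to $(2^{-j}R)^{-(3-s)}$, and the integral of $|f|$ over the enclosed ball is dominated by $(2^{-j}R)^3\,Mf(\mathbf{x})$, where $Mf$ is the Hardy--Littlewood maximal function. Summing the geometric series in $j$ (convergent precisely because $s>0$) yields the pointwise bound $|I_1|\lesssim R^s\,Mf(\mathbf{x})$. For the tail $I_2$ I would apply H\"older with exponent $p'$; the resulting integral $\int_{|\mathbf{y}|\geq R}|\mathbf{y}|^{-(3-s)p'}\,d\mathbf{y}$ converges exactly when $(3-s)p'>3$, which is equivalent to $s<3/p$, and this is guaranteed by $1/q=1/p-s/3>0$. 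A direct computation of the tail integral then gives $|I_2|\lesssim R^{\,s-3/p}\,\|f\|_{L^p}$.

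Combining the two estimates produces $|\Lambda^{-s}f(\mathbf{x})|\lesssim R^{s}\,Mf(\mathbf{x})+R^{\,s-3/p}\,\|f\|_{L^p}$ for every $R>0$. The crux is then to optimize in $R$: balancing the two terms forces the choice $R=(\|f\|_{L^p}/Mf(\mathbf{x}))^{p/3}$, which after substitution and the bookkeeping identity $p/q=1-sp/3$ (a restatement of the scaling relation $1/q+s/3=1/p$) gives the pointwise inequality $|\Lambda^{-s}f(\mathbf{x})|\lesssim\|f\|_{L^p}^{1-p/q}\,(Mf(\mathbf{x}))^{p/q}$. Raising this to the $q$-th power and integrating, the $R$-exponents collapse and I am left with $\|\Lambda^{-s}f\|_{L^q}^{q}\lesssim\|f\|_{L^p}^{\,q-p}\,\|Mf\|_{L^p}^{\,p}$.

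The last and decisive step is to invoke the Hardy--Littlewood maximal theorem, $\|Mf\|_{L^p}\lesssim\|f\|_{L^p}$, which holds precisely because $p>1$; this closes the estimate to $\|\Lambda^{-s}f\|_{L^q}\lesssim\|f\|_{L^p}$. I expect the main obstacle to be twofold: correctly tracking the exponents so that the scaling relation makes the final powers cancel, and the fact that the argument genuinely requires $p>1$ --- at the endpoint $p=1$ the maximal theorem fails and one must instead prove a weak-type $(1,q)$ bound via Chebyshev and interpolate, but that degenerate case is excluded here by the hypothesis $1<p<q<\infty$.
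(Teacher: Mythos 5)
Your argument is correct and complete: it is the classical Hedberg pointwise estimate $|\Lambda^{-s}f(\mathbf{x})|\lesssim \|f\|_{L^p}^{1-p/q}\,(Mf(\mathbf{x}))^{p/q}$, and every exponent checks out --- the dyadic sum converges because $s>0$, the tail integral converges because $1/q=1/p-s/3>0$ forces $(3-s)p'>3$, and the scaling relation makes the powers of $R$ cancel after optimization. The paper itself offers no proof of this lemma; it simply cites Stein's book (Chapter V, Theorem 1), where the standard argument splits the kernel in the same way but then bounds the distribution function of $\Lambda^{-s}f$ to obtain a weak-type $(p,q)$ estimate and concludes by Marcinkiewicz interpolation. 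Your route replaces that interpolation step with the pointwise maximal-function inequality, which is arguably cleaner; the price is the same in both cases, namely the restriction $p>1$ (in Stein's proof it enters through the interpolation endpoints, in yours through the strong $L^p$ boundedness of $M$), and you correctly flag that the endpoint $p=1$ would only yield a weak-type bound. Two cosmetic remarks: you announce the normalization $\|f\|_{L^p}=1$ but then carry $\|f\|_{L^p}$ through anyway (harmless), and the identification of $\Lambda^{-s}$ with the Riesz potential holds up to a multiplicative constant under the paper's Fourier conventions, which is all the stated inequality requires.
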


\vspace{0.2cm}Now our main results are given as following:

\begin{theorem}  \label{theo11}
	Assume that \eqref{h14}, \eqref{h110}, and 
\begin{equation}\label{rho}
 (\rho_0-\bar{\rho}, \mathbf{u}_0)\in
	H^{3}(\mathbb{R}^3),\quad\inf_{\mathbf{x}\in \mathbb{R}^3}\rho_0(\mathbf{x})>0,
\end{equation}
\begin{equation}\label{phi}
\nabla\phi_0\in
	H^{2}(\mathbb{R}^3),\quad\phi_0^2-1\in L^2(\mathbb{R}^3). 
\end{equation}
	Then, there exists a  positive constant  $\delta>0$  such that if
	\begin{equation}\label{g17}
	\big\|(\rho_0-\bar{\rho}, \mathbf{u}_0)\big\|_{H^{3}} +\| \nabla\phi_0\big\|_{H^{2}}+\|\phi_0^2-1\big\|\leq \delta,
	\end{equation} then
	the Cauchy problem \eqref{h17}-\eqref{hh19} admits a unique solution $(\rho,
	\mathbf{u}, \phi)$ on $[0,\infty)$ satisfying
	$$(\rho-\bar{\rho}, \mathbf{u}) \in C([0,\infty),
H^3(\mathbb{R}^3)),\ \phi^2-1\in C([0,\infty),L^{2}(\mathbb{R}^3)),\ \nabla\phi \in C([0,\infty),H^2(\mathbb{R}^3)),$$
	$$\nabla\rho\in L^2(0,\infty;H^2(\mathbb{R}^3)),\ \  \nabla\phi \in L^2(0,\infty;H^3(\mathbb{R}^3)),\ \ \nabla \mathbf{u}\in L^2(0,\infty;H^3(\mathbb{R}^3)),\ -1\leq\phi\leq1.
	$$
	and
	\begin{equation}\label{h18}
		\begin{aligned}
	&\sup_{t\in\mathbb{R}_+}\Big\{\big\|(\rho-\bar{\rho}, \mathbf{u})(t)\|^2_{H^3}+ \|\nabla \phi(t)\big\|^2_{H^2}+ \big\|\phi^2(t)-1\big\|^2\Big\}\\
&\quad+\int_0^{+\infty}\Big(\big\|\nabla\rho\big\|^2_{H^2}+\big\|\nabla\mathbf{u},\nabla\phi\big\|^2_{H^3} \Big)d\tau\\
	&\quad\leq C\Big(\big\|\rho_0-\bar{\rho}\big\|^2_{H^3}+\big\|\mathbf{u}_0\big\|^2_{H^3}+ \big\|\nabla \phi_0\big\|^2_{H^2}+\|\phi_0^2-1\|^2\Big).
	\end{aligned}\end{equation} where $C$ is the positive constant independent of
	$\mathbf{x}, t$ and  $ \delta$.

Moreover, if  $(\rho_0-\bar{\rho}, \mathbf{u}_0, \nabla\phi_0, \phi_0^2-1)\in \dot{H}^{-s}(\mathbb{R}^3)$ for some $s\in [0,\f{3}{2})$, then
	\begin{equation}
	\label{h124}
	\big\|(\rho-\bar{\rho}, \mathbf{u}, \nabla\phi, \phi^2-1)(t)\big\|_{\dot{H}^{-s}}^2\leq C_0,
	\end{equation} and
		\begin{equation}
	\label{hg124}
	\big\|\nabla^l(\rho-\bar{\rho}, \mathbf{u})(t)\big\|_{H^{3-l}}^2+	\big\|\nabla^l(\nabla\phi, \phi^2-1)(t)\big\|_{H^{2-l}}^2\leq C_0(1+t)^{-(l+s)},
	\end{equation}
for $l=0,1,2$, where $\dot{H}^{-s}(\mathbb{R}^3)$ denotes the homogeneous negative Sobolev space.
\end{theorem}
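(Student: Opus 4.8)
The plan is to prove \eqref{h18} by a global energy method and then obtain \eqref{h124}--\eqref{hg124} by combining uniform negative Sobolev bounds with Sobolev interpolation. First I would pass to perturbation variables $\sigma=\rho-\bar\rho$, $\mathbf{u}$, and --- since $\phi$ has no single limit at infinity (it approaches $\pm1$) --- measure the phase through $\nabla\phi$ and $\psi:=\phi^2-1$. A key algebraic reduction is that the capillary stress collapses, $\mathrm{div}\big(\nabla\phi\otimes\nabla\phi-\tfrac12|\nabla\phi|^2\mathbb{I}\big)=\Delta\phi\,\nabla\phi$, and that $-\epsilon\Delta\phi=\rho\mu-\tfrac{\rho}{\epsilon}\phi\psi$, so the capillary force in the momentum balance is expressed through the chemical potential $\mu$ plus the gradient term $\tfrac{\rho}{\epsilon}\phi\psi\,\nabla\phi$; this is precisely the structure that makes the coupling terms cancel in the energy identity. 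Near the wells $\phi=\pm1$ one has $\phi^3-\phi=\phi\psi$ with $\phi^2\approx1$, so the double-well nonlinearity furnishes a genuine zeroth-order damping of $\psi$, the analogue of the velocity/density dissipation for the fluid part.

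Second, I would carry out the a priori estimates. A basic estimate follows by testing the momentum equation with $\mathbf{u}$, the mass equation with the pressure potential, and the phase equation with $\mu$; the Hamiltonian structure cancels the capillary and convective cross-terms and yields control of $\|(\sigma,\mathbf{u})\|^2+\|\nabla\phi\|^2+\|\psi\|^2$ with dissipation $\|\nabla\mathbf{u}\|^2+\|\mu\|^2$. Applying $\nabla^k$ for $k=1,2,3$ and testing gives the higher-order estimates, where Lemma \ref{lem22} handles the composite and product nonlinearities and Lemma \ref{lem21} together with the smallness \eqref{g17} lets one absorb every nonlinear contribution into the dissipation. The density dissipation $\|\nabla\sigma\|_{H^2}^2$, not produced directly, is recovered by the standard cross-term trick of estimating $\tfrac{d}{dt}\int\nabla^k\mathbf{u}\cdot\nabla\nabla^k\sigma$ and invoking $p'(\bar\rho)>0$ from \eqref{h110}. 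Summing these with suitable small weights produces an energy $\widetilde{\mathcal E}(t)$ equivalent to the left side of \eqref{h18} with $\tfrac{d}{dt}\widetilde{\mathcal E}+c\mathcal D\le0$; a continuity argument based on local existence and \eqref{g17} extends the solution to $[0,\infty)$ and yields \eqref{h18}, while $-1\le\phi\le1$ follows from the maximum principle for the Allen-Cahn equation.

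Third, for the decay I would propagate the negative norm \eqref{h124} by applying $\Lambda^{-s}$ to the equations and estimating in $\dot H^{-s}$, controlling the nonlinear terms with the Hardy-Littlewood-Sobolev inequality (Lemma \ref{lem25}) and Lemma \ref{lem21}; the restriction $s\in[0,\tfrac32)$ is exactly what keeps the required Lebesgue exponents admissible, giving the uniform bound $\|(\sigma,\mathbf{u},\nabla\phi,\psi)(t)\|_{\dot H^{-s}}\le C_0$. I would then feed this into the interpolation Lemma \ref{lem24}, $\|\nabla^l f\|\lesssim\|\nabla^{l+1}f\|^{1-\theta}\|f\|_{\dot H^{-s}}^{\theta}$ with $\theta=\tfrac{1}{l+s+1}$, so that the level-$l$ energy is bounded by the dissipation times the bounded negative norm. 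Substituting into the level-$l$ energy inequality $\tfrac{d}{dt}\mathcal E_l+c\mathcal D_l\le0$ closes it into a differential inequality of the form $\tfrac{d}{dt}\mathcal E_l+C\,C_0^{-1/(l+s)}\mathcal E_l^{\,1+1/(l+s)}\le0$, whose integration gives $\mathcal E_l(t)\lesssim(1+t)^{-(l+s)}$; running $l=0,1,2$ establishes \eqref{hg124}.

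The step I expect to be the main obstacle is the consistent treatment of the phase near $|\phi|=1$: one must work throughout with $\psi=\phi^2-1$ and $\nabla\phi$ instead of $\phi$, check that the double-well term damps with the correct sign once $\phi$ is close to the wells, and verify at every derivative order --- in both the high-order energy estimates and the negative-norm estimates --- that the product and composite nonlinearities, and in particular the capillary coupling $\epsilon\Delta\phi\,\nabla\phi$ and its cancellation against the chemical-potential terms, are genuinely dominated by the available dissipation under the smallness assumption.
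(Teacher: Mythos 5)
Your proposal follows essentially the same route as the paper: perturbation variables with the phase measured through $\nabla\phi$ and $\phi^2-1$, cancellation of the capillary coupling in the basic energy identity tested with $\mathbf{u}$ and $\mu$, coercivity of the double-well term near the wells (the paper's condition $3\phi^2-1>m_0$), the cross-term trick for density dissipation, the maximum principle for $-1\le\phi\le1$, and negative Sobolev bounds via Hardy--Littlewood--Sobolev combined with Lemma \ref{lem24} and the differential inequality \eqref{h119}. The only detail you gloss over is that for $s\in(\tfrac12,\tfrac32)$ the uniform $\dot H^{-s}$ bound does not close directly and requires a bootstrap through the decay already established at $s=\tfrac12$, exactly as the paper does at the end of Subsection \ref{subs2}.
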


\vspace{0.2cm}If the initial value has a higher regularity, the above Theorem \ref{theo11} can be generalized as follows.
\begin{corollary}  \label{corol1}
 assuming that $N\geq3$, $ (\rho_0-\bar{\rho}, \mathbf{u}_0)\in	H^{3}(\mathbb{R}^3)$,  $\nabla\phi_0\in H^{2}(\mathbb{R}^3),\quad\phi_0^2-1\in L^2(\mathbb{R}^3)$, and all the other assumptions in the theorem 1.2 remain unchanged, then, the regularity of the solution which obtained in Theorem 1.2 can also be improved correspondingly, i.e.
$$(\rho-\bar{\rho}, \mathbf{u}) \in C([0,\infty),H^N(\mathbb{R}^3)),\ \phi^2-1\in C([0,\infty),L^{2}(\mathbb{R}^3)),\ \nabla\phi \in C([0,\infty),H^{N-1}(\mathbb{R}^3)),$$
	$$\nabla\rho\in L^2(0,\infty;H^{N-1}(\mathbb{R}^3)),\ \  \nabla\phi \in L^2(0,\infty;H^N(\mathbb{R}^3)),\ \ \nabla \mathbf{u}\in L^2(0,\infty;H^N(\mathbb{R}^3)),	$$
and
$$\big\|\nabla^l(\rho-\bar{\rho}, \mathbf{u})(t)\big\|_{H^{N-l}}^2+	\big\|\nabla^l(\nabla\phi, \phi^2-1)(t)\big\|_{H^{N-1-l}}^2\leq C_0(1+t)^{-(l+s)},\ l=0,\cdots,N-1.$$
\end{corollary}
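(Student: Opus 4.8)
The plan is to read Corollary \ref{corol1} as a regularity-propagation and decay-enhancement statement built on top of Theorem \ref{theo11}, rather than re-proving global existence from scratch; accordingly I interpret the hypotheses in the strengthened form $(\rho_0-\bar\rho,\mathbf{u}_0)\in H^{N}$, $\nabla\phi_0\in H^{N-1}$, $\phi_0^2-1\in L^2$, with the smallness still measured in the $H^3$ topology of \eqref{g17}. Theorem \ref{theo11} already produces a unique global solution that stays uniformly small in that $H^3$ topology, so $\|(\rho-\bar\rho,\mathbf{u})\|_{H^3}+\|\nabla\phi\|_{H^2}+\|\phi^2-1\|$ remains below a fixed small constant for all $t\geq0$. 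By Sobolev embedding this controls $\|(\rho-\bar\rho,\mathbf{u},\nabla\phi)\|_{L^\infty}$ and, since $-1\le\phi\le1$, also $\|\phi\|_{L^\infty}\le1$. These uniform bounds are exactly what is needed to apply the composition and product estimates of Lemma \ref{lem21} and Lemma \ref{lem22} at every derivative level, and they keep the coefficients $1/\rho$, $1/\rho^2$ and the nonlinearities tame throughout.

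First I would establish the higher regularity by induction on the order $k$ of spatial derivatives, $3\le k\le N$, the base case $k=3$ being Theorem \ref{theo11}. Assuming the estimates through order $k-1$, I apply $\nabla^k$ to the system, test against the corresponding component, and integrate. Each nonlinear term is organized so that the factor carrying the top-order derivative of the dissipative variables is paired either with the small $H^3$ quantity or with a lower-order factor already controlled by the induction hypothesis, the remaining commutators being distributed by the Gagliardo--Nirenberg inequality \eqref{h20} and bounded via Lemma \ref{lem22}. After absorbing the top-order contributions into the viscous and diffusive dissipation, this yields a closed differential inequality
\begin{equation*}
\frac{d}{dt}\mathcal{E}_k(t)+c\,\mathcal{D}_k(t)\le 0,
\end{equation*}
where $\mathcal{E}_k$ is equivalent to $\|\nabla^k(\rho-\bar\rho,\mathbf{u})\|^2+\|\nabla^k\phi\|^2+\|\nabla^{k-1}(\phi^2-1)\|^2$ and the dissipation $\mathcal{D}_k$ controls $\|\nabla^{k+1}(\mathbf{u},\phi)\|^2+\|\nabla^{k}(\rho-\bar\rho)\|^2$; note that the density gains only a same-order bound, recovered through the pressure coupling in the momentum equation, whereas $\mathbf{u}$ and $\phi$ gain one extra derivative from parabolicity. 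Integrating in time gives the persistence of $H^N$ regularity and the time-integrability statements, completing the improved existence part.

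For the decay rates I would reuse the negative Sobolev machinery of the theorem. The bound $\|(\rho-\bar\rho,\mathbf{u},\nabla\phi,\phi^2-1)\|_{\dot H^{-s}}\le C_0$ in \eqref{h124} propagates by the same argument as in Theorem \ref{theo11}: applying $\Lambda^{-s}$ to the equations and bounding the Riesz potentials of the quadratic nonlinearities by Lemma \ref{lem25}, an estimate that is independent of how many positive derivatives the solution carries. With this uniform negative norm in hand, I apply Lemma \ref{lem24} at each level $l=0,\dots,N-1$: the interpolation $\|\nabla^l f\|\lesssim\|\nabla^{l+1}f\|^{1-\theta}\|f\|^{\theta}_{\dot H^{-s}}$ with $\theta=\tfrac{1}{l+s+1}$ converts the dissipation into the lower bound $\mathcal{D}_l\gtrsim \mathcal{E}_l^{\,1+1/(l+s)}$, turning the energy inequality into $\frac{d}{dt}\mathcal{E}_l+C\,\mathcal{E}_l^{\,1+1/(l+s)}\le0$, which integrates to the algebraic rate $\mathcal{E}_l\lesssim(1+t)^{-(l+s)}$. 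Collecting these bounds over the relevant derivative orders yields \eqref{hg124} with $H^3$ replaced by $H^N$ and $l$ ranging up to $N-1$.

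The main obstacle I anticipate lies in the high-order nonlinear estimates, because the phase variable $\phi$ itself is of order one while only $\nabla\phi$ and $\phi^2-1$ are small; one therefore cannot treat $\phi$ as a perturbation. The estimates must instead exploit the precise algebraic structure, in particular the factorization $\phi^3-\phi=\phi(\phi^2-1)$, which exposes the small factor $\phi^2-1$, and the quadratic capillary tensor $\nabla\phi\otimes\nabla\phi-\tfrac12|\nabla\phi|^2\mathbb{I}$, which is quadratic in the small quantity $\nabla\phi$. Guaranteeing that in every term of the $\nabla^k$ energy identity at least one factor is small while only a single factor carries the dangerous top-order derivative is the delicate bookkeeping that makes the higher-order estimates close; the density-dependent coefficients $1/\rho$ and $1/\rho^2$ are handled by writing them as smooth functions of $\rho-\bar\rho$ and invoking Lemma \ref{lem22}.
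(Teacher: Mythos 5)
Your proposal is correct and takes essentially the same route as the paper: the paper proves the case $N=3$ in detail (the level-$k$ energy inequalities of Lemmas \ref{lem31}--\ref{lem34} and \ref{lem51}, the negative Sobolev estimates of Lemma \ref{lem41}, and the interpolation of Lemma \ref{lem24} converting dissipation into $\mathcal{E}_l^{1+1/(l+s)}$ as in \eqref{h119}), and the corollary is understood to follow by running exactly this machinery with $N$ in place of $3$, the smallness being measured only in the $H^3$ topology as stressed in Remark \ref{r1}. Your reading of the hypotheses as $(\rho_0-\bar{\rho},\mathbf{u}_0)\in H^{N}$, $\nabla\phi_0\in H^{N-1}$ (correcting what is evidently a typo in the statement) is the intended one, and your inductive bookkeeping --- pairing each top-order factor with a quantity small in $H^3$, exploiting $\phi^3-\phi=\phi(\phi^2-1)$ and Lemma \ref{lem22} for the coefficients $1/\rho$, $1/\rho^2$ --- mirrors the paper's treatment of the terms $I_4$--$I_8$ and $I_{16}$--$I_{21}$.
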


\vspace{0.2cm} By Lemma \ref{lem25}, we obtain that for $p\in(1,2]$, $L^p(\mathbb{R}^3)\subset \dot{H}^{-s}(\mathbb{R}^3)$ with $s=3\lf(\f{1}{p}-\f{1}{2}\rg)\in [0, \f{3}{2})$. Then by Theorem \ref{theo11}, we have the following corollary of the usual $L^p-L^2$ type of optimal decay results:
\begin{corollary}  \label{corol2}
Under the assumptions of Corollary \ref{corol1} except that we replace the $\dot{H}^{-s}$ assumption by that $(\rho_0-\bar{\rho}, \mathbf{u}_0, \nabla\phi_0, \phi_0^2-1)\in L^p(\mathbb{R}^3)$ for some $p\in(1,2]$, then the following decay results hold:
	\begin{equation}
\label{h126}
	\big\|\nabla^l(\rho-\bar{\rho}, \mathbf{u})(t)\big\|_{H^{N-l}}^2+\big\|\nabla^l(\nabla\phi, \phi^2-1)(t)\big\|_{H^{N-1-l}}^2\lesssim  (1+t)^{-l-3\lf(\f{1}{p}-\f{1}{2}\rg)},
\end{equation}
 for $l=0,1,\cdots,N-1$.
\end{corollary}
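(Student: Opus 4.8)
The plan is to deduce this corollary directly from Corollary \ref{corol1} by showing that the hypothesis $(\rho_0-\bar{\rho}, \mathbf{u}_0, \nabla\phi_0, \phi_0^2-1)\in L^p(\mathbb{R}^3)$ with $p\in(1,2]$ implies the negative Sobolev regularity $(\rho_0-\bar{\rho}, \mathbf{u}_0, \nabla\phi_0, \phi_0^2-1)\in \dot{H}^{-s}(\mathbb{R}^3)$ for the specific value $s=3(\f{1}{p}-\f{1}{2})$. Once this single embedding is in hand, the claimed decay is nothing but the rate $(1+t)^{-(l+s)}$ already furnished by Corollary \ref{corol1}, with this value of $s$ substituted in.

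First I would dispose of the endpoint $p=2$ separately: there $s=0$, so $\dot{H}^{-s}=\dot{H}^{0}=L^2$, and the $L^p$ assumption coincides with the required $\dot{H}^{-s}$ assumption, needing no further argument. For $p\in(1,2)$ I would apply Lemma \ref{lem25} with the choice $q=2$. Its admissibility conditions $0<s<3$ and $1<p<q<\infty$ hold because $q=2$ and $s=3(\f{1}{p}-\f{1}{2})$ sweeps out $(0,\f{3}{2})\subset(0,3)$ as $p$ ranges over $(1,2)$; the balance condition $\f{1}{q}+\f{s}{3}=\f{1}{p}$ is exactly $\f{1}{2}+(\f{1}{p}-\f{1}{2})=\f{1}{p}$. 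Hence by Definition \ref{def21} and Lemma \ref{lem25}, applied to each scalar component $g$ of $(\rho_0-\bar{\rho}, \mathbf{u}_0, \nabla\phi_0, \phi_0^2-1)$,
$$
\|g\|_{\dot{H}^{-s}}=\|\Lambda^{-s}g\|\lesssim \|g\|_{L^p},
$$
so finiteness of the $L^p$ norms of the data guarantees finiteness of their $\dot{H}^{-s}$ norms with $s=3(\f{1}{p}-\f{1}{2})\in[0,\f{3}{2})$.

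With the $\dot{H}^{-s}$ hypothesis of Corollary \ref{corol1} thus verified for this particular $s$, I would simply invoke that corollary's decay estimate, which gives
$$
\big\|\nabla^l(\rho-\bar{\rho}, \mathbf{u})(t)\big\|_{H^{N-l}}^2+\big\|\nabla^l(\nabla\phi, \phi^2-1)(t)\big\|_{H^{N-1-l}}^2\leq C_0(1+t)^{-(l+s)}
$$
for $l=0,\dots,N-1$; substituting $s=3(\f{1}{p}-\f{1}{2})$ yields exactly \eqref{h126}. There is no genuine obstacle in this argument: its entire content is the Hardy–Littlewood–Sobolev embedding of $L^p$ into the homogeneous negative Sobolev space together with the matching of the decay exponent. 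All the substantive analysis—the uniform-in-time energy estimates, the control of the negative-order norm along the flow, and the interpolation inequality of Lemma \ref{lem24} that converts these into algebraic-in-time rates—has already been carried out in the proof of Theorem \ref{theo11} and its higher-regularity extension stated in Corollary \ref{corol1}.
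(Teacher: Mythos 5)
Your proposal is correct and is essentially the paper's own argument: the paper likewise derives Corollary \ref{corol2} by noting (via Lemma \ref{lem25}) that $L^p(\mathbb{R}^3)\subset \dot{H}^{-s}(\mathbb{R}^3)$ with $s=3\left(\frac{1}{p}-\frac{1}{2}\right)\in[0,\frac{3}{2})$ and then invoking the decay rate $(1+t)^{-(l+s)}$ from Theorem \ref{theo11} and Corollary \ref{corol1}. Your extra care with the endpoint $p=2$ and the verification of the exponent balance are fine but do not change the route.
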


The followings are several remarks for Theorem \ref{theo11} and Corollary \ref{corol1}-\ref{corol2}.

\begin{remark}\label{r1}
1) For the global existence of the solution, we only assume that the $H^3$-norm
of initial data is small, while the higher-order Sobolev norms can be arbitrarily large, as in several works for compressible fluid flows (see \cite{W 12, TZ 11, GTY 16}).

2) Notice that	the decay rate \eqref{h126}  of global solution is optimal in the sense that it coincides with
the decay rate of solutions to the linearized system
$$\left\{\begin{aligned}
&\sigma_t+\bar{\rho}{\rm div} \mathbf{u}=0,\\
&\mathbf{u}_t-\nu\bar{\rho}^{-1}\Delta\mathbf{u}-(\nu+\lambda)\bar{\rho}^{-1}\nabla{\rm
	div}
\mathbf{u}+\f{p'(\bar{\rho})}{\bar{\rho}}\nabla\sigma+\frac{\epsilon}{\bar\rho}\nabla\phi\Delta\phi=0,\\
& \phi_t-\f{\epsilon}{\bar{\rho}^2}\Delta\phi=0.\end{aligned}\right.
$$(cf. \cite[Remark 1.2]{GTY 16}).
\end{remark}

\begin{remark}

1) Note that we obtain the $L^2$ optimal decay rate of the higher-order spatial derivatives of the solution in  Corollary \ref{corol2}. Then the general optimal $L^q (q> 2)$ decay rates of the solution follow by the Gagliardo-Nirenberg inequality (see Lemma \ref{lem21}). For instance, it follows from \eqref{h126} that
$$\big\|(\rho-\bar{\rho}, \mathbf{u}, \nabla\phi, \phi^2-1)(t)\big\|_{L^q}^2\lesssim (1+t)^{-3\lf(\f{1}{p}-\f{1}{q}\rg)},\ \mathrm{for}\ 2<q\leq \infty.$$

2) The constraint
$s<\f{3}{2} $ in Theorem \ref{theo11}  comes from applying Lemma \ref{lem25}  to estimate the nonlinear terms when doing
the negative Sobolev estimates. This
in turn restricts $p>1$ in Corollary \ref{corol2} by our method. Note that the nonlinear estimates would not work for $s\geq\f{3}{2}$.

3) Using  the Green function, a priori estimates and the Fourier splitting method developed by Schonbek \cite{S 95}, we can prove Corollary \ref{corol2} with $p=1$ by the same lines as in \cite[Section 4]{GTY 16}.

\end{remark}

\begin{remark}
By using the Sobolev embedding theorem, it can be obtained that the initial data $\inf\limits_{\mathbf{x}\in \mathbb{R}^3}\phi_0^2>\frac{1}{3}$ when $\delta$ in \eqref{g17} is small enough. From a physical point of view, we know that $\phi_0=\pm\frac{\sqrt{3}}3$ are the critical points of the  phase-phase interfacial free energy density $f$, $\inf\limits_{\mathbf{x}\in \mathbb{R}^3}\phi_0^2>\frac{1}{3}$ means that the initial phase field is in the stable region, that is, in a state near phase separation. Theorem \ref{theo11} implies that if the immiscible two-phase flow is initially located near the phase separation state, then under small perturbation conditions, the solution exists globally and decays algebraically to the complete separation state of the two-phase flow, that is, the interface thickness tends to zero as the time $t\rightarrow+\infty$.
	
\end{remark}
As we know,  for the Cauchy problem of the compressible fluid flows in $\mathbb{R}^3$, there exist  several works to study the global well-posedness and algebraic decay estimates of smooth solutions: we refer to  \cite{GW 11} for Navier-Stokes system, \cite{W 12} for Navier-Stokes-Poisson system, \cite{TZ 11} for Navier-Stokes-Kortweg system and \cite{GTY 16} for nematic liquid crystal flows.  To prove Theorem \ref{theo11}, we will use the energy method  developed by \cite{GW 11}, which relies essentially on  the following  two main steps:

Step 1. Energy estimates at $l-$th level:
\begin{equation}
\label{h117}\begin{aligned}
\f{d}{dt}\mathcal{E}_l(t)&+\big\|\nabla ^{l+1} \rho(t)\big\|^2_{H^{3-l-1}}+ \big\|\nabla^{l+1}\mathbf{u}(t)\big\|^2_{H^{3-l}}+ \big\|\nabla^{l+1} \phi(t)\big\|^2_{H^{3-l}}\leq 0,
\end{aligned}
\end{equation}for any $0\leq l\leq 3$, where
$$\mathcal{E}_l(t) \backsimeq \big\|\nabla^l(\rho-\bar{\rho}, \mathbf{u})(t)\big\|^2_{H^{3-l}}+\big\|\nabla^{l+1} \phi(t)\big\|^2_{H^{3-1-l}}+\big\|\phi^2-1\big\|^2,$$
and $A\backsimeq   B$ means $CA\leq B\leq \f{1}{C}A$ for a generic constant $C>0.$

Step 2. Negative Sobolev norm estimate:
\begin{equation}
\label{h118}
\f{d}{dt}\mathcal{E}_{-s}(t)\lesssim \Big(\big\|\nabla\rho\big\|_{H^2}^2+
\big\|\nabla(\mathbf{u}, \nabla \phi)\big\|_{H^1}^2+\big\|\nabla(\phi^2-1)\big\|^2\Big)\mathcal{E}_{-s}(t),\end{equation}for $0 < s \leq \f{1}{2},$ and \begin{equation}
\label{hh118}
\f{d}{dt}\mathcal{E}_{-s}(t)\lesssim \big\|(\rho-\bar{\rho}, \mathbf{u}, \phi^2-1, \nabla\phi)\big\|^{s-\f{1}{2}}\Big(
\big\|\nabla(\rho, \mathbf{u},  \nabla\phi)\big\|_{H^1}+\big\|\nabla(\phi^2-1)\big\|\Big)^{\f{5}{2}-s}\mathcal{E}_{-s}(t),\end{equation}for $\f{1}{2}< s<\f{3}{2},$  where
$$\mathcal{E}_{-s}=\big\|\Lambda^{-s} (\rho-\bar{\rho})\big\|^2+\big\|\Lambda^{-s} \mathbf{u}\big\|^2+\big\|\Lambda^{-s} \nabla\phi\big\|^2+\big\|\Lambda^{-s} (\phi^2-1)\big\|^2. $$
If we prove \eqref{h117}, then it is easy to show that there exits a  solution of the system \eqref{h17}-\eqref{hh19} satisfying \eqref{h18} by the continuation argument of local solution (Subsection \ref{subs1}). Moreover, by using \eqref{h118} or \eqref{hh118}, and  Lemma \ref{lem24} and the following estimate on  differential inequality
\begin{equation}
	\label{h119}\f{df(t)}{dt}+c_0(f(t))^{1+\f{1}{l+s}}\leq 0\Rightarrow f(t)\leq \lf(f(0)^{-\f{1}{l+s}}+\f{c_0t}{l+s}\rg)^{-(l+s)}\lesssim (1+t)^{-(l+s)},\end{equation}
we can get the decay estimates \eqref{hg124}  (Subsection \ref{subs2}). Therefore, the estimates \eqref{h117}-\eqref{hh118} are essential in the proof of Theorem \ref{theo11}.

 Here, we briefly review some  difficulties and key analytical techniques in deriving \eqref{h117}-\eqref{hh118}, compared with previous works  in \cite{W 12, TZ 11, GTY 16}.
The main difficulty comes from the Allen-Cahn  equation $\eqref{h17}_{3,4}$, rewritten it as a second order nonlinear parabolic equation
\begin{equation}
\label{h120}
\phi_t-\f{\epsilon}{\rho^2}\Delta\phi-\f{1}{\epsilon\rho}\lf(1-\phi^2\rg)\phi=- \mathbf{u}\cdot\nabla\phi,\end{equation}where  the strong nonlinear term $-\lf(1-\phi^2\rg)\phi$ makes a trouble for desired estimates  because  $\|\phi(t)\|_{L^\infty(\mathbb{R}^3)}$
is not small and $\phi\notin L^p(\mathbb{R}^3)$ for any $1\leq p<\infty$ due to $\eqref{hh19}$. Moreover, the antiderivative of this nonlinear term, that is the phase-phase interfacial free energy density, is not a convex function, this makes it difficult to obtain the upper bound estimate for $L^2([0,T])$ norm of  $\|\nabla\phi(t)\|$. On the other hand, the coupling between the Navier-Stokes equations $\eqref{h17}_{1,2}$ and the Allen-Cahn equation  \eqref{h120} also bring trouble to get the density estimation. In order to overcome these difficulties, we first
find that $\int_0^T\|\nabla \phi(\tau)\|^2 d\tau$ can be controlled by the  small perturbations for the initial energy.
Further, we obtained the upper bound estimate of $\|\phi\|_{L^\infty(\mathbb{R}^3)}$ by using the method of energy estimates, (see Lemma \ref{lem31}).  Based on these facts, we complete the estimate \eqref{hr329} for $\nabla\phi$ at  $k-$th level, where a new $L^p-$estimate \eqref{hh20}  for nonlinear functions is used essentially together with Gagliardo-Nirenberg inequality \eqref{h20}  (Lemma \ref{lem34}). By the same lines as in \cite{W 12, TZ 11, GTY 16}, we derive the estimate \eqref{hr316} and \eqref{hr326}  for $(\rho-\bar{\rho}, \mathbf{u})$ at  $k-$th level (Lemma \ref{lem32} and Lemma \ref{lem33}). Combining Lemmas  \ref{lem31}-\ref{lem33}, we could obtain \eqref{h117} (see Subsection \ref{subs1}).
Next, multiplying  \eqref{h120} by $2\phi$ and applying $\Lambda^{-s}$  to the resulting equality \eqref{hr330}, we could obtain the following type of inequality
$$\f{d}{dt}\big\|\Lambda^{-s} \lf(\phi^2-1,\nabla\phi \rg)\big\|^2+\big\|\Lambda^{-s} \Delta\phi\big\|^2+\big\|\Lambda^{-s} \nabla\lf(\phi^2-1\rg)\big\|^2\lesssim \cdots \cdots, $$which is a key point in completing the negative Sobolev norm estimates \eqref{h118} and \eqref{hh118} (see \eqref{hh44} and Section \ref{sec2}).


\section{The local existence and a series of energy estimates}\label{sec2}
\setcounter{equation}{0}
This section is devoted to establish the local existence for the solutions of the the Cauchy problem \eqref{h17}-\eqref{hh19}, and to derive the a series of energy estimates which  play a key role in the process of extending the local solution to the global solution.
For any interval $I\subset[0,\infty)$, and $\forall m>0,M>0$,   we suppose  that
$(\sigma, \mathbf{u}, \phi)\in X_{m,M}(I)$ is the solution to the system
\eqref{h17}-\eqref{hh19}, where the solution space $X_{m,M}(I)$  is defined as follows
\begin{equation}\label{h310}
\begin{aligned}
X_{m,M}(I)=&\Big\{(\sigma,\mathbf{u}, \phi)\Big|\,(\sigma,\mathbf{u})\in C(\emph{I};H^3(\mathbb{R}^3)),\,\,\,\nabla\phi \in C(I;H^2(\mathbb{R}^3)),\\
& \phi^2-1\in C(I;L^{2}(\mathbb{R}^3)),\,\,\,\nabla\sigma \in L^2(I;H^2(\mathbb{R}^3)),
\\
&\nabla\phi \in L^2(I;H^3(\mathbb{R}^3)),\,\,\,\nabla \mathbf{u}\in L^2(I;H^3(\mathbb{R}^3))\\
&\sup_{ t \in I}\lf(\| (\sigma,\mathbf{u})\|_{H^{3}}+\| \nabla\phi\|_{H^{2}}+\|\phi^2(t)-1\|\rg)\leq M,\\
& \inf_{\mathbf{x}\in \mathbb{R}^3, t\in I}\phi^2(\mathbf{x},t)-\frac{1}{3}>m,\inf_{\mathbf{x}\in \mathbb{R}^3, t\in I}\rho(\mathbf{x},t)>m
\Big\}.
\end{aligned}\end{equation}
We are now in a position to establish the existence and uniqueness for the local strong solutions$(\rho,\mathbf{u},\phi)$ of the Cauchy problem \eqref{h17}-\eqref{hh19}.
\begin{proposition}  \label{pro311}
	\textbf{(local existence). }Assume that \eqref{h14},\eqref{h110},\eqref{rho}-\eqref{phi}.   Let $\|(\rho_0-\bar{\rho}, \mathbf{u}_0)\|_{H^{3}} +\| \nabla\phi_0\|_{H^{2}}+\|\phi_0^2-1\|\leq M$, $\inf\limits_{\mathbf{x}\in \mathbb{R}^3}\phi_0^2(\mathbf{x})-\frac{1}{3}>m>0$ and $\inf\limits_{\mathbf{x}\in \mathbb{R}^3 }\rho_0(\mathbf{x})>m>0$, then there exists $T^*$ small enough, such that, the Cauchy problem \eqref{h17}-\eqref{hh19} admits a unique solution $(\rho,	\mathbf{u}, \phi)\in X_{\frac{m}{2},2M}\big([0,T^*]\big)$. satisfying
$$(\rho-\bar{\rho}, \mathbf{u}) \in C([0,T^*];H^3(\mathbb{R}^3)),\ \nabla\phi \in C([0,T^*];H^2(\mathbb{R}^3)), \phi^2-1\in C([0,T^*];L^{2}(\mathbb{R}^3)),$$
$$\nabla\rho\in L^2([0,T^*];H^2(\mathbb{R}^3)), \nabla\phi \in L^2([0,T^*];H^3(\mathbb{R}^3)),\ \nabla \mathbf{u}\in L^2([0,T^*];H^3(\mathbb{R}^3)).$$
	\end{proposition}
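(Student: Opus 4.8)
The plan is to construct the local solution by a standard linearization--iteration scheme adapted to the phase-separation setting, in which the natural unknowns are $\sigma=\rho-\bar\rho$, $\mathbf u$, together with $\nabla\phi$ and $\phi^2-1$ rather than $\phi$ itself (since $\phi\notin L^p$ by \eqref{hh19}). Starting from the constant iterate $(\sigma^0,\mathbf u^0,\phi^0)\equiv(\sigma_0,\mathbf u_0,\phi_0)$, I would define $(\sigma^{n+1},\mathbf u^{n+1},\phi^{n+1})$ by solving the decoupled linear problems obtained from \eqref{h17} by freezing coefficients and nonlinear right-hand sides at the previous iterate: a linear transport equation $\p_t\sigma^{n+1}+\mathbf u^n\cdot\nabla\sigma^{n+1}+(\bar\rho+\sigma^n)\,\mathrm{div}\,\mathbf u^n=0$ for the density; a linear parabolic system for $\mathbf u^{n+1}$ with leading coefficient $\nu/\rho^n$ and forcing built from $\nabla p(\rho^n)$ and the capillary stress $\epsilon\,\mathrm{div}\big(\nabla\phi^n\otimes\nabla\phi^n-\f12|\nabla\phi^n|^2\mathbb I\big)$; and the linear parabolic equation $\p_t\phi^{n+1}-\f{\epsilon}{(\rho^n)^2}\Delta\phi^{n+1}=\f{1}{\epsilon\rho^n}(1-(\phi^n)^2)\phi^n-\mathbf u^n\cdot\nabla\phi^{n+1}$ read off from \eqref{h120}. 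Each subproblem is solvable in the stated class by classical transport/parabolic theory, provided $\rho^n$ is bounded away from zero.

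Next I would establish uniform bounds: assuming $(\sigma^n,\mathbf u^n,\phi^n)\in X_{m/2,2M}([0,T^*])$, I would show the same membership for the $(n{+}1)$-st iterate once $T^*$ is small. The high-order norm $\|(\sigma^{n+1},\mathbf u^{n+1})\|_{H^3}+\|\nabla\phi^{n+1}\|_{H^2}+\|(\phi^{n+1})^2-1\|$ is controlled by $H^3$ energy estimates on the linear problems: applying $\nabla^k$ ($0\le k\le 3$) to each equation, pairing with the appropriate quantity, integrating by parts, and bounding every product by Lemma \ref{lem21} together with the composite-function estimate \eqref{hh20} of Lemma \ref{lem22} (which is precisely what lets me estimate $\nabla^k\big((1-\phi^2)\phi\big)$ and $\nabla^k p(\rho)$ without $\phi\in L^p$). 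A Gr\"onwall argument then yields $M(T^*)\le C e^{CT^*(1+M)}M$, so the ball of radius $2M$ is preserved for $T^*$ small. The two pointwise constraints are treated separately: writing $\rho^{n+1}$ along the characteristics of $\mathbf u^n$ gives $\rho^{n+1}(\mathbf x,t)=\rho_0\exp\!\big(-\!\int_0^t\mathrm{div}\,\mathbf u^n\,ds\big)$, so $\inf\rho^{n+1}>m/2$ once $\int_0^{T^*}\|\mathrm{div}\,\mathbf u^n\|_{L^\infty}\,dt$ is small, while $\|\phi^{n+1}-\phi_0\|_{L^\infty}\le\int_0^{T^*}\|\p_t\phi^{n+1}\|_{L^\infty}\,dt$ is small, preserving $\inf\big((\phi^{n+1})^2-\f13\big)>m/2$.

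I would then prove convergence by showing the scheme is contractive in a low-order norm. Estimating the differences $(\sigma^{n+1}-\sigma^n,\mathbf u^{n+1}-\mathbf u^n,\phi^{n+1}-\phi^n)$ in $L^2\times L^2\times H^1$ (so that $\nabla(\phi^{n+1}-\phi^n)$ closes the transport and capillary couplings), the uniform high-order bounds let me absorb every commutator and difference of nonlinearities, giving $\sup_{[0,T^*]}\mathcal D^{n+1}\le C T^*\sup_{[0,T^*]}\mathcal D^n$ with $\mathcal D^n$ the low-order difference energy; for $T^*$ small this is a contraction, so the iterates converge strongly in the low norm. Interpolating against the uniform $X_{m/2,2M}$-bounds upgrades this to convergence in every intermediate norm, and weak-$*$ lower semicontinuity places the limit in $X_{m/2,2M}$; the limit solves \eqref{h17}--\eqref{hh19}, and time-continuity is recovered in the standard way. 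Uniqueness follows from the same difference estimate applied to two solutions and Gr\"onwall.

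The hard part, and where this argument departs from the Navier--Stokes--Poisson/Korteweg and liquid-crystal templates of \cite{W 12, TZ 11, GTY 16}, is the treatment of the phase field. Because $\phi$ neither decays nor lies in any $L^p$, every estimate involving $\phi$ must be routed through $\nabla\phi$ and $\phi^2-1$, and the strongly nonlinear reaction term $\f{1}{\epsilon\rho}(1-\phi^2)\phi$ must be differentiated via \eqref{hh20} rather than by naive products. Equally delicate is propagating the two infimum constraints $\inf\rho>m/2$ and $\inf(\phi^2-\f13)>m/2$ for positive time, since both the parabolic coefficient $\epsilon/\rho^2$ and the well-posedness of the velocity subproblem degenerate if these are lost; keeping $T^*$ uniform across the whole iteration while respecting these constraints is the principal technical obstacle.
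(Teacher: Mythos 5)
The paper does not actually prove Proposition \ref{pro311}: it states only that the result ``can be obtained by the Schauder's fixed point method'' and omits all details. Your proposal therefore cannot be matched against a written argument, but it is a coherent and essentially complete sketch along a genuinely different (and more constructive) route, namely a linear iteration closed by a contraction estimate in a low-order norm, i.e.\ Banach rather than Schauder. The trade-off is the usual one: Schauder's theorem only requires compactness of the solution map on a convex set and so tolerates cruder estimates, but it must be supplemented by a separate uniqueness argument; your contraction scheme demands the additional low-norm difference estimate (which does close here, since the uniform $X_{m/2,2M}$ bounds control all the coefficients and commutators), and in exchange delivers uniqueness and strong convergence of the iterates for free. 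Your identification of the genuinely nonstandard points --- routing every estimate of the phase field through $\nabla\phi$ and $\phi^2-1$ because $\phi\notin L^p(\mathbb{R}^3)$, using \eqref{hh20} for the cubic reaction term, and propagating the two infimum constraints $\inf\rho>m/2$ and $\inf(\phi^2-\tfrac13)>m/2$ uniformly over the iteration --- is exactly where the ``standard'' proof has to be adapted to this system, and your mechanisms for the constraints (the characteristic representation of $\rho^{n+1}$ and the bound $\|\phi^{n+1}-\phi_0\|_{L^\infty}\le\int_0^{T^*}\|\partial_t\phi^{n+1}\|_{L^\infty}\,dt$, the latter finite because $\Delta\phi^{n+1}\in L^2(0,T^*;H^2)\hookrightarrow L^1(0,T^*;L^\infty)$) are sound. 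Two small points deserve care in a full write-up: the Gr\"onwall closure for $\|\sigma^{n+1}\|_{H^3}$ involves $\nabla^4\mathbf{u}^n$, which is only $L^2$ in time, so the smallness must come from a $\sqrt{T^*}$ factor rather than a pointwise-in-time bound; and the time-continuity of the top-order norm of $\sigma$ (a transport quantity) requires the usual mollification argument rather than following directly from the energy inequality.
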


Proposition \eqref{pro311}  can be obtained by the Schauder's fixed point method.  The proof is standard and we omit here.
Now, in order to get the global solution, on the basis of the existence and uniqueness of local solutions, we will give the a prior estimate by the following lemmas in this section.
Setting $\sigma=\rho-\bar{\rho},$
and using   $${\rm div}\lf(\nabla\phi\otimes \nabla\phi\rg)=\nabla\lf(\f{|\nabla\phi|^2}{2}\rg)+\nabla\phi\Delta\phi,$$  we
reformulate the system \eqref{h17} as
\begin{equation}\label{342}\left\{\begin{aligned}
&\sigma_t+\bar{\rho}{\rm div} \mathbf{u}=g_1,\\
&\mathbf{u}_t-\nu\bar{\rho}^{-1}\Delta\mathbf{u}-(\nu+\lambda)\bar{\rho}^{-1}\nabla{\rm
	div}
\mathbf{u}+\f{p'(\bar{\rho})}{\bar{\rho}}\nabla\sigma+\frac{\epsilon}{\bar\rho}\nabla\phi\Delta\phi=\mathbf{g}_2,\\
&\rho \phi_t+\rho \mathbf{u}\cdot\nabla\phi
=-\mu, \quad   \rho\mu=\f{\rho}{\epsilon}\lf(\phi^3-\phi\rg)-\epsilon\Delta\phi,\end{aligned}\right.
\end{equation}where $g_1$ and $\mathbf{g}_2$ are defined respectively by
\begin{equation}\label{343}\begin{aligned}
&g_1=-{\rm div}(\sigma \mathbf{u}),\\
&\mathbf{g}_2=-
(\mathbf{u}\cdot\nabla)\mathbf{u}+h_1(\sigma)\nabla\sigma
-h_2(\sigma)\Big(\nu\Delta\mathbf{u}+(\nu+\lambda)\nabla{\rm
	div} \mathbf{u}-\epsilon\nabla\phi\Delta\phi\Big),\end{aligned}
\end{equation}
here $h_1(\sigma)=\f{p'(\bar{\rho})}{\bar{\rho}}-\f{p'(\rho)}
{\rho }$ and $h_2(\sigma)=\bar{\rho}^{-1}-\rho^{-1}$.
Considering the definition \eqref{h310}, combining with Sobolev embedding theorem $H^2\hookrightarrow C^0$, we can choose $M_0>0$, such that, $\forall0<M<M_0$,
\begin{equation}\label{h315}
\frac{\bar\rho}{2}\leq\rho(\mathbf{x},t)\leq 2\bar{\rho}, \qquad 3\phi^2-1>m_0\overset{\text{def}}{=}\inf_{\mathbf{x}\in\mathbb{R}^3}(3\phi_0^2-1).
\end{equation}

The following lemma 3.1 is the basic energy inequality and the  gradient estimation about the phase field $\phi$.
\begin{lemma}\label{lem31}  Under the  assumption \eqref{h310}, it holds   that
	\begin{equation}
	\label{hr32}
	\begin{aligned}
	\|(\sigma, \mathbf{u}, \phi^2-1, \nabla\phi)(t)\|^2+\int_0^t \|(\mu, \nabla \mathbf{u},\Delta\phi,\nabla\phi ) \|^2d\tau\leq  C_0 \|(\sigma, \mathbf{u}, \phi^2-1, \nabla\phi)(0)\|^2.
	\end{aligned}\end{equation}
and
	\begin{equation}
	\label{hr33}
	\|\phi(t)\|_{L^\infty}\lesssim 1,	\end{equation} where  \begin{equation}
	\label{energy}
	G(\rho)=\rho\int_{\bar{\rho}}^\rho\f{p(z)-p(\bar{\rho})}{z^2}dz, \quad\rho>0.
	\end{equation}
	\end{lemma}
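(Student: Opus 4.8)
The plan is to derive one clean dissipative energy law for the system \eqref{h17} and then to read off the missing $\nabla\phi$ and $\Delta\phi$ dissipation from the chemical potential $\mu$, crucially using the stability bound $3\phi^2-1>m_0>0$ recorded in \eqref{h315}. I would work with the natural total energy
$$E(t)=\int\lf(\f12\rho|\mathbf{u}|^2+G(\rho)+\f{\rho}{4\epsilon}(\phi^2-1)^2+\f{\epsilon}{2}|\nabla\phi|^2\rg)d\mathbf{x},$$
where $G$ is as in \eqref{energy} and satisfies $\rho G'(\rho)-G(\rho)=p(\rho)-p(\bar\rho)$ and $G''(\rho)=p'(\rho)/\rho$.

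First I would test the momentum equation $\eqref{h17}_2$ with $\mathbf{u}$, using the continuity equation $\eqref{h17}_1$ to turn the convective and $\rho_t$ contributions into $\f{d}{dt}\int(\f12\rho|\mathbf{u}|^2+G(\rho))$; this produces the viscous dissipation $-\nu\|\nabla\mathbf{u}\|^2-(\nu+\lambda)\|{\rm div}\,\mathbf{u}\|^2$ together with the capillary work $\epsilon\int(\nabla\phi\otimes\nabla\phi-\f12|\nabla\phi|^2\mathbb{I}):\nabla\mathbf{u}$, which after one integration by parts equals $-\epsilon\int(\mathbf{u}\cdot\nabla\phi)\Delta\phi$. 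Simultaneously, testing the Allen-Cahn pair $\eqref{h17}_{3,4}$ with $\mu$ gives $-\|\mu\|^2$ on the right; on the left, after using $\eqref{h17}_1$ to rewrite the $\rho_t(\phi^2-1)^2$ term and cancelling the two $\f{\rho}{\epsilon}(\phi^2-1)\phi\,\mathbf{u}\cdot\nabla\phi$ contributions, it becomes $\f{d}{dt}\int\rho f+\epsilon\int(\mathbf{u}\cdot\nabla\phi)\Delta\phi$. The decisive point is that the two coupling integrals $\mp\epsilon\int(\mathbf{u}\cdot\nabla\phi)\Delta\phi$ cancel exactly, leaving the closed law
$$\f{d}{dt}E(t)+\nu\|\nabla\mathbf{u}\|^2+(\nu+\lambda)\|{\rm div}\,\mathbf{u}\|^2+\|\mu\|^2=0.$$

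To reach \eqref{hr32} I would extract $\|\nabla\phi\|^2$ and $\|\Delta\phi\|^2$ from $\|\mu\|^2$. Multiplying $\rho\mu=\f{\rho}{\epsilon}(\phi^3-\phi)-\epsilon\Delta\phi$ by $\Delta\phi$ and integrating by parts gives
$$-\int\mu\Delta\phi=\f1\epsilon\int(3\phi^2-1)|\nabla\phi|^2+\epsilon\int\f1\rho(\Delta\phi)^2\geq\f{m_0}{\epsilon}\|\nabla\phi\|^2+\f{\epsilon}{2\bar\rho}\|\Delta\phi\|^2,$$
where the lower bound uses $3\phi^2-1>m_0$ from \eqref{h315} and the upper density bound. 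Since the left side is $\leq\|\mu\|\,\|\Delta\phi\|$, absorbing $\|\Delta\phi\|$ by Young's inequality yields $\|\nabla\phi\|^2+\|\Delta\phi\|^2\lesssim\|\mu\|^2$. Integrating the energy law in time and combining with this pointwise-in-time bound controls $\int_0^t\|(\mu,\nabla\mathbf{u},\Delta\phi,\nabla\phi)\|^2d\tau$ by $E(0)$; and the coercivity $E(t)\backsimeq\|(\sigma,\mathbf{u},\phi^2-1,\nabla\phi)(t)\|^2$ follows from $\f{\bar\rho}2\leq\rho\leq2\bar\rho$ and the Taylor expansion of $G$ at $\bar\rho$ (where $G(\bar\rho)=G'(\bar\rho)=0$, $G''(\bar\rho)=p'(\bar\rho)/\bar\rho>0$), delivering \eqref{hr32}.

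For the $L^\infty$ bound \eqref{hr33} I would argue on the equation for $v=\phi^2-1$ obtained by multiplying \eqref{h120} by $2\phi$ and using $2\phi\Delta\phi=\Delta v-2|\nabla\phi|^2$, namely
$$v_t+\mathbf{u}\cdot\nabla v-\f{\epsilon}{\rho^2}\Delta v=-\f{2\epsilon}{\rho^2}|\nabla\phi|^2-\f{2}{\epsilon\rho}v(v+1),$$
whose source is $\leq 0$ wherever $v$ is large and positive. Testing with $v_+^{2p-1}$, using the good sign of the diffusion and nonlinearity together with $\|{\rm div}\,\mathbf{u}\|_{L^\infty}\lesssim\|\nabla\mathbf{u}\|_{H^2}$ (Sobolev) to control the transport term, and letting $p\to\infty$ (the variable coefficient being harmless since $\f{\bar\rho}2\leq\rho\leq2\bar\rho$ and $\nabla\rho\in L^2$), bounds $\|v_+(t)\|_{L^\infty}$ by its initial value, hence $\|\phi(t)\|_{L^\infty}\lesssim1$. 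I expect the main obstacle to be precisely the step $\|\nabla\phi\|^2+\|\Delta\phi\|^2\lesssim\|\mu\|^2$: because $\phi\notin L^p$ and the double-well free energy is non-convex, the $\nabla\phi$-dissipation is not furnished by the energy law alone, and it is the sign condition $3\phi^2-1>m_0$ (the phase lying in the stable region) that renders the displayed identity coercive and rescues the estimate.
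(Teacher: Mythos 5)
Your proposal is correct and, in its essentials, coincides with the paper's proof: the same closed energy law obtained by testing the momentum equation with $\mathbf{u}$ and the Allen--Cahn pair with $\mu$ (the two capillary coupling integrals $\mp\epsilon\int(\mathbf{u}\cdot\nabla\phi)\Delta\phi\,d\mathbf{x}$ cancel exactly, giving \eqref{h38}), the same coercivity of $G$ near $\bar\rho$ via \eqref{h330}, and, decisively, the same extraction of $\|\nabla\phi\|^2+\|\Delta\phi\|^2\lesssim\|\mu\|^2$ by pairing the constitutive relation for $\mu$ with $-\Delta\phi$, which is coercive precisely because of the sign condition $3\phi^2-1>m_0$ recorded in \eqref{h315} — you correctly identify this as the crux. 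There are two genuine, though minor, deviations. First, you divide the $\mu$-relation by $\rho$ before testing, so your identity carries the clean term $\epsilon\int\rho^{-1}(\Delta\phi)^2\,d\mathbf{x}$ and no density commutator; the paper instead tests $\rho\mu=\frac{\rho}{\epsilon}(\phi^3-\phi)-\epsilon\Delta\phi$ directly and must absorb the extra term $I_3=\frac{1}{\epsilon}\int(\phi^2-1)\phi\,\nabla\phi\cdot\nabla\sigma\,d\mathbf{x}$ using the smallness of $M$ in \eqref{h310}; your variant is marginally cleaner. Second, for \eqref{hr33} the paper multiplies \eqref{phi111} by $2\phi$ to get \eqref{phi222}, whose right-hand side is nonpositive, and invokes the parabolic maximum principle of Pol\'a\v{c}ik to conclude $\phi^2-1\le 0$ outright; your Moser/Stampacchia iteration on $(\phi^2-1)_+$ reaches the same conclusion more hands-on, at the cost of the bookkeeping you allude to — the transport term contributes only a factor $\exp\big(\frac{1}{2p}\int_0^t\|\mathrm{div}\,\mathbf{u}\|_{L^\infty}d\tau\big)\to 1$, but the variable diffusion coefficient $\epsilon/\rho^2$ creates a $\nabla\rho$ commutator that needs $\|\nabla\rho\|_{L^\infty}$ (not just $\nabla\rho\in L^2$) to absorb; this nuisance disappears entirely if, as in the paper, you keep the equation in the undivided form $\rho^2 v_t+\rho^2\mathbf{u}\cdot\nabla v-\epsilon\Delta v+\cdots$ with a constant-coefficient Laplacian. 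Both routes are sound and yield the stated lemma.
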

\begin{proof} Noticing that $$\rho G'(\rho)=G(\rho)+(p(\rho)-p(\bar{\rho})),\quad \rho G''(\rho)=p'(\rho),$$
$$G(\rho)_{t}+{\rm div}(G(\rho) \mathbf{u})+(p(\rho)-p(\bar{\rho})){\rm div}\mathbf{u}=
0,$$ and using
$\eqref{h17}_2$, we have
\begin{equation}\label{3.9-3}\begin{aligned}
		&\f{d}{dt}\int\lf(\f{1}{2}\rho
		\mathbf{u}^2+G(\rho)\rg)d\mathbf{x}+\nu\int|\nabla \mathbf{u}|^2d\mathbf{x}\\
		&+(\nu+\lambda)\int |{\rm div}\mathbf{u}|^2d\mathbf{x}-\epsilon\int\mathbf{u}\cdot\nabla\phi\Delta\phi d\mathbf{x}=0.\end{aligned}
\end{equation}
Multiplying $\eqref{342}_3$  by $\mu$ and using $\eqref{342}_4$ yields that
\begin{equation}\label{h37}\f{\epsilon}{2}\f{d}{dt}\int|\nabla\phi|^2d\mathbf{x}+\f{1}{4\epsilon}\f{d}{dt}\int\rho\lf(\phi^2-1\rg)^2d\mathbf{x} +\int|\mu |^2d\mathbf{x}=-\epsilon\int \mathbf{u}\cdot \nabla\phi \Delta\phi d\mathbf{x},
\end{equation}where we used
$$\f{1}{4}\int\lf(\phi^2-1\rg)^2{\rm div}(\rho \mathbf{u})d\mathbf{x}=-\int\rho \mathbf{u}\cdot \nabla\phi \lf(\phi^3-\phi\rg)d\mathbf{x}.$$
Adding \eqref{3.9-3} and \eqref{h37}, and using \eqref{h14},
we get
\begin{equation}\label{h38}\f{d}{dt}\int\lf(\f{1}{2}\rho
\mathbf{u}^2+G(\rho)+\frac{\epsilon}{2}|\nabla
\phi|^2+\frac{\rho}{4\epsilon}(\phi^2-1)^2\rg)d\mathbf{x} +\nu\|\nabla\mathbf{u}\|^2+\|
\mu \|^2\leq 0.
\end{equation}
Using  \eqref{energy}, \eqref{h310} and \eqref{h315}, we have
\begin{equation}\label{h330}
c_{\bar{\rho}}(\rho-\bar{\rho})^2\leq G(\rho)\leq C_{\bar{\rho}}(\rho-\bar{\rho})^2.\end{equation}
Therefore, by \eqref{h38},
\eqref{h330} and \eqref{h315}, we obtain from \eqref{h38} that
\begin{equation}
\label{hh32}\begin{aligned}
&\|(\sigma, \mathbf{u}, \phi^2-1, \nabla\phi)(t)\|^2+\int_0^t \|(\mu, \nabla \mathbf{u})  \|^2d\tau\leq  C_0 \|(\sigma, \mathbf{u}, \phi^2-1, \nabla\phi)(0)\|^2.
\end{aligned}
\end{equation}
We rewrite $\eqref{342}_{3,4}$ as
\begin{equation}\label{phi111} 
\rho^2\phi_t+ \rho^2\mathbf{u}\cdot\nabla\phi=\epsilon\Delta\phi-\frac{\rho}{\epsilon}(\phi^3-\phi).
\end{equation}
Multiplying it by $2\phi$, we have
\begin{equation}\label{phi222} 
\rho^2(\phi^2-1)_t+ \rho^2\mathbf{u}\cdot\nabla(\phi^2-1)-\epsilon\Delta(\phi^2-1)+\frac{2\rho}{\epsilon}(\phi^2-1)=-2\epsilon|\nabla\phi|^2-\frac{2\rho}{\epsilon}(\phi^2-1)^2\leq 0.
\end{equation}
By using the maximum principle for parabolic equation (see Lemma 2.1 in \cite{p2005}) and \eqref{h315}, we obtain
\begin{equation}\label{phi333}
\displaystyle \phi^2-1\leq 0,
\end{equation} which yields \eqref{hr33}.
On the other hand, multiplying $\eqref{342}_{4}$ by $-\Delta\phi$, we have
\begin{equation}\label{hg320}
\begin{aligned}
&\epsilon\|\Delta\phi\|^2+\f{1}{\epsilon}\underline{\int \rho(3\phi^2-1)|\nabla\lf(\phi\rg)|^2d\mathbf{x}}_{I_1}
\\&
=-\underline{\int\rho \mu \Delta \phi d\mathbf{x}}_{I_2}-\f{1}{\epsilon}\underline{\int  \lf(\phi^2-1\rg) \phi\nabla\phi\cdot\nabla\sigma d\mathbf{x}}_{I_3}.
\end{aligned}
\end{equation}
For the estimates  on $I_{i}(i=1,2,3)$, we have
$$\begin{aligned}
&I_1\geq\frac{\bar\rho c_0}{4\epsilon}\int\|\nabla\phi\|^2d\mathbf{x},\qquad I_2\overset{\eqref{h315}}\leq2\bar{\rho} \|\mu\| \|\Delta\phi\| \leq \f{\epsilon}{4}\|\Delta\phi\|^2+\f{4\bar{\rho}^2}{\epsilon}\|\mu\| ^2,\\
&
I_3 \lesssim\|\lf(\phi^2-1\rg)\|_{L^6}\|\phi\nabla\phi\|\|\nabla\sigma\|_{L^3}\overset{\eqref{h310}}\lesssim M\|\nabla\phi\|^2.
\end{aligned}
$$
Substituting the estimates  on $I_{i}(i=1,2,3)$ into \eqref{hg320}, and using \eqref{h315}, for $M$ suitable small, we get
\begin{equation}\label{hg321}
\epsilon^2\|\Delta\phi\|^2+\|\nabla\phi\|^2\leq \|\mu\| ^2.
\end{equation}
By \eqref{h38} and \eqref{hg321}, we get \eqref{hr32}.
The proof of Lemma \ref{lem31} is completed.\end{proof}

\vspace{0.2cm} The following lemma is a higher-order estimate of the gradient of the phase field $\phi$.
\begin{lemma}\label{lem34}  Under the  the  assumption \eqref{h310}, it holds   that
	\begin{equation}
	\label{hr329}
	\f{d}{dt}\|\nabla^{k+1} \phi\|^2+\|\nabla^{k+2}\phi\|^2	\lesssim M\lf(\|\nabla^{k+1}	\sigma\|^2+ \|\nabla^{k+1}\phi\|^2 +	\|\nabla^{k+2} \mathbf{u}\|^2\rg),
	\end{equation}for $k=1, 2$.
\end{lemma}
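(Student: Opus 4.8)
The plan is to derive the higher-order estimate for $\nabla\phi$ by applying the differential operator $\nabla^{k+1}$ to the reformulated Allen-Cahn equation \eqref{phi111}, namely
$$\rho^2\phi_t+\rho^2\mathbf{u}\cdot\nabla\phi=\epsilon\Delta\phi-\f{\rho}{\epsilon}(\phi^3-\phi),$$
then testing the resulting identity against $\nabla^{k+1}\phi$ and integrating over $\mathbb{R}^3$. The principal linear structure comes from the diffusion term: integrating $\nabla^{k+1}(\epsilon\Delta\phi)$ against $\nabla^{k+1}\phi$ (after moving $\rho^2$ appropriately) produces the dissipation $\|\nabla^{k+2}\phi\|^2$ on the left-hand side, while the time-derivative term yields $\f{d}{dt}\|\nabla^{k+1}\phi\|^2$ up to commutator corrections. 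The essential strategy is thus to show that every remaining term — the variable coefficient $\rho^2$ hitting $\phi_t$, the transport term $\rho^2\mathbf{u}\cdot\nabla\phi$, and the reaction term $-\f{\rho}{\epsilon}(\phi^3-\phi)$ — contributes only quantities bounded by $M$ times the dissipative norms $\|\nabla^{k+1}\sigma\|^2+\|\nabla^{k+1}\phi\|^2+\|\nabla^{k+2}\mathbf{u}\|^2$.

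First I would handle the top-order diffusion and time-derivative terms by the standard integration-by-parts/commutator decomposition, writing $\rho^2=\bar\rho^2+(\rho^2-\bar\rho^2)$ so the constant part gives the clean leading-order identity and the fluctuation $(\rho^2-\bar\rho^2)$, which is $O(\sigma)$ and hence $O(M)$ in $L^\infty$ by Sobolev embedding and Lemma~\ref{lem31}, gets absorbed into the right-hand side. For the commutators arising from distributing $\nabla^{k+1}$ across the products $\rho^2\phi_t$ and $\rho^2\mathbf{u}\cdot\nabla\phi$, the key tool is the Gagliardo-Nirenberg inequality \eqref{h20} to interpolate intermediate derivatives, combined with the uniform smallness of the low-order norms guaranteed by \eqref{h310}. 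The transport term $\rho^2\mathbf{u}\cdot\nabla\phi$ is estimated by counting derivatives: when all $k+1$ derivatives fall on $\nabla\phi$ one uses the antisymmetry of the convective structure and the smallness of $\|\mathbf{u}\|_{H^3}\lesssim M$, and the off-diagonal distributions are controlled by products of $H^2$-norms times the top-order dissipation.

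The genuinely delicate step, as the authors themselves emphasize in the introduction, is the reaction term $-\f{\rho}{\epsilon}(\phi^3-\phi)=\f{\rho}{\epsilon}(1-\phi^2)\phi$. Here the obstacle is that $\phi$ itself is \emph{not} small in $L^\infty$ and does not belong to any $L^p$ space for finite $p$ because of the far-field condition \eqref{hh19} forcing $|\phi|\to1$. The resolution is to exploit Lemma~\ref{lem22}: since $\phi^2-1\in L^2$ and $\|\phi\|_{L^\infty}\lesssim1$ by \eqref{hr33}, one writes the nonlinearity as a smooth function composed with $\phi$ and applies the $L^p$ bound \eqref{hh20}, which gives $\|\nabla^{k+1}(\text{nonlinear term})\|_{L^p}\lesssim\|\nabla^{k+1}\phi\|_{L^p}$ with constants depending only on the bounded $L^\infty$-norm of $\phi$ and not on decay at infinity. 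In other words, \emph{all} derivatives of the reaction term are controlled by derivatives of $\phi$ alone, sidestepping the absence of integrability of $\phi$. Matching $p$ via Gagliardo-Nirenberg to close against $\|\nabla^{k+1}\phi\|^2$ and $\|\nabla^{k+2}\phi\|^2$, and absorbing the $\|\nabla^{k+2}\phi\|^2$ contribution into the left-hand dissipation (possible precisely because the prefactor is $O(M)$), then yields \eqref{hr329}. I expect this interplay between the nonlinear $L^p$-estimate \eqref{hh20} and the Gagliardo-Nirenberg interpolation \eqref{h20} to be the crux of the argument.
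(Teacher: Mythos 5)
Your overall architecture is the same as the paper's: an energy estimate on the parabolic equation for $\phi$, Gagliardo--Nirenberg interpolation \eqref{h20} for the commutators, and the composition estimate \eqref{hh20} together with the smallness of $\phi^2-1$ for the reaction term. The paper, however, works with the \emph{normalized} equation \eqref{hg330}, obtained by dividing \eqref{phi111} by $\rho^2$, applies $\nabla^k$ and tests against $-\Delta\nabla^k\phi$; this makes the time-derivative term exactly $\tfrac12\tfrac{d}{dt}\|\nabla^{k+1}\phi\|^2$ with no corrections, and puts the variable coefficient only on the dissipative term $\tfrac{\epsilon}{\rho^2}\Delta\phi$, where it is harmless. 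By instead keeping $\rho^2\phi_t$ and applying $\nabla^{k+1}$, you generate commutators $\nabla^{j}(\rho^2)\,\nabla^{k+1-j}\phi_t$ for $j\geq 1$, which involve derivatives of $\phi_t$ that are not among the dissipative norms on the right of \eqref{hr329}; closing them requires substituting the equation back in for $\phi_t$ (bringing in $\nabla^{k+3-j}\phi$, etc.). This is fixable but is an avoidable complication you do not address.

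The more substantive gap is in the reaction term. The conclusion \eqref{hr329} carries the small prefactor $M$ on \emph{all} right-hand terms, and this is essential: in the summed inequality \eqref{h54} the contribution $\|\nabla^{k+1}\phi\|^2$ (e.g.\ $\|\nabla^2\phi\|^2$ for $k=1$) is absorbed by the dissipation $\|\Delta\phi\|^2$ coming from Lemma \ref{lem31} only because its coefficient is $O(M)$. Your proposed mechanism --- applying \eqref{hh20} to the full composite $f(\phi)=\phi^3-\phi$ to get $\|\nabla^{k+1}f(\phi)\|_{L^p}\lesssim\|\nabla^{k+1}\phi\|_{L^p}$ --- produces a constant depending only on $\|\phi\|_{L^\infty}\lesssim 1$, i.e.\ an $O(1)$ prefactor, not $O(M)$. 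The paper instead keeps the factor $\phi^2-1$ explicit, writes the reaction term as $\tfrac{\phi^2-1}{\epsilon\rho}\,\phi$, and distributes derivatives by Leibniz (the terms $I_6^1$, $I_6^2$, $I_6^3$ in \eqref{hh326}): when $\phi^2-1$ is undifferentiated one uses $\|\phi^2-1\|_{L^3}\lesssim\|\phi^2-1\|^{1/2}\|\nabla(\phi^2-1)\|^{1/2}\lesssim M$ via \eqref{hr33} and \eqref{h310}, and when it is differentiated one uses $\nabla(\phi^2-1)=2\phi\nabla\phi$ with $\|\nabla\phi\|_{H^2}\leq M$. You mention the smallness of $\phi^2-1$ in $L^2$, but your stated route does not actually exploit it to produce the factor $M$; as written, the estimate you would obtain is too weak to be absorbed in the global scheme.
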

\begin{proof} We rewrite $\eqref{342}_{3,4}$ as
\begin{equation}\label{hg330} \phi_t+ \mathbf{u}\cdot\nabla\phi-\f{\epsilon}{\rho^2}\Delta\phi+ \f{\phi^2-1}{\epsilon\rho}\phi=0.\end{equation}
Applying $\nabla^k$ to \eqref{hg330}  and
multiplying it by $-\Delta \nabla^k \phi$, we have
\begin{equation}\label{hg328}\begin{aligned}
\f{1}{2}&\f{d}{dt}\|\nabla^{k+1}\phi\|^2+\int\f{\epsilon}{\rho^2}|\nabla^{k}\Delta \phi|^2d\mathbf{x}\\
&= \underline{\int \nabla^k (\mathbf{u}\cdot\nabla\phi) \Delta \nabla^k\phi d\mathbf{x}}_{I_{4}}-\epsilon\underline{\sum_{1\leq l\leq k}
	C_{k}^{l}\int \nabla^{l}\lf(\f{1}{\rho^2}\rg) \nabla^{k-l}\Delta\phi \nabla^{k}\Delta\phi d\mathbf{x}}_{I_{5}}\\
&+\f{1}{\epsilon}\underline{\sum_{0\leq l\leq k}
	C_{k}^{l}\int \nabla^{l}\lf(\f{\phi^2-1}{\rho}\rg) \nabla^{k-l} \phi \nabla^{k}\Delta\phi d\mathbf{x}}_{I_{6}}.
\end{aligned}
\end{equation}
We estimate  $I_{i} (i=4,5, 6)$. For  $I_{4}$, we have
\begin{eqnarray*}
I_{4}&\displaystyle=\sum_{0\leq l\leq k}C^l_{k}\int \nabla^{l}\mathbf{u}\cdot\nabla^{k-l+1}\phi \Delta \nabla^k\phi d\mathbf{x} \\
&\displaystyle\lesssim \sum_{0\leq l\leq k} \|\nabla^{l}\mathbf{u}\cdot\nabla^{k-l+1}\phi\|\|\nabla^{k+2}\phi\|.
\end{eqnarray*}
If $l\leq \lf[\f{k+1}{2}\rg]$,   we get
$$
\begin{aligned}
&\displaystyle\|\nabla^{l}\mathbf{u}\cdot\nabla^{k-l+1}\phi\|\lesssim \|\nabla^{l}\mathbf{u}\|_{L^3}\|\nabla^{k-l+1}\phi\|_{L^6} \\
&\displaystyle\overset{\eqref{h20}}\lesssim \|\nabla^\alpha\mathbf{u}\|^{1-\f{l}{k+1}}\|\nabla^{k+2}\mathbf{u}\|^{\f{l}{k+1}}\|\nabla\phi\|^{\f{l}{k+1}}\|\nabla^{k+2}\phi\|^{1-\f{l}{k+1}}
\\
&\displaystyle\overset{\eqref{h310}}\lesssim M\|\nabla^{k+2}\mathbf{u}\|^{\f{l}{k+1}}\|\nabla^{k+2}\phi\|^{1-\f{l}{k+1}}\lesssim M\lf(\|\nabla^{k+2}\mathbf{u}\|+\|\nabla^{k+2}\phi\|\rg),
\end{aligned}$$
where $\alpha$ is defined by
\begin{eqnarray*}
&&\displaystyle\frac{l-1}{3}=\lf(\frac{\alpha}{3}-\frac{1}{2}\rg)\lf(1-\f{l}{k+1}\rg)+\lf(\frac{k+2}{3}-\frac{1}{2}\rg)\f{l}{k+1} \\
&&\Longrightarrow \alpha=\f{3}{2}+\f{l}{2(k+1-l)}\in \lf[\f{3}{2}, \f{5}{2}\rg].
\end{eqnarray*}
If $\lf[\f{k+1}{2}\rg]+1\leq l\leq k$ (if $k\leq \lf[\f{k+1}{2}\rg]+1$, then it's nothing in this case, and hereafter, etc.), we get
$$
\begin{aligned}
&\displaystyle\|\nabla^{l}\mathbf{u}\cdot\nabla^{k-l+1}\phi\|\lesssim \|\nabla^{l}\mathbf{u}\|_{L^6}\|\nabla^{k-l+1}\phi\|_{L^3} \\
&\displaystyle\overset{\eqref{h20}}\lesssim \|\mathbf{u}\|^{1-\f{l+1}{k+2}}\|\nabla^{k+2}\mathbf{u}\|^{\f{l+1}{k+2}}\|\nabla^\alpha\phi\|^{\f{l+1}{k+2}}\|\nabla^{k+2}\phi\|^{1-\f{l+1}{k+2}}
\\
&\displaystyle\overset{\eqref{h310}}\lesssim M\|\nabla^{k+2}\mathbf{u}\|^{\f{l+1}{k+2}}\|\nabla^{k+2}\phi\|^{1-\f{l+1}{k+2}}\lesssim M\lf(\|\nabla^{k+2}\mathbf{u}\|+\|\nabla^{k+2}\phi\|\rg),
\end{aligned}$$
where $\alpha$ is defined by
\begin{eqnarray*}
&&\displaystyle   \frac{k-l}{3}=\lf(\frac{\alpha}{3}-\frac{1}{2}\rg)\f{l+1}{k+2}+\lf(\frac{k+2}{3}-\frac{1}{2}\rg)\lf(1-\f{l+1}{k+2}\rg) \\
&&\displaystyle \Longrightarrow \alpha=\f{3(k+2)}{2(l+1)}\in \lf[\f{3}{2},3\rg].
  \end{eqnarray*}
Therefore, we obtain
\begin{equation}\label{hr323}
|I_{4}|\lesssim  M\lf(\|\nabla^{k+2}\mathbf{u}\|^2+\|\nabla^{k+2}\phi\|^2\rg).\end{equation}
Also, for  $I_{5}$, we have
\begin{eqnarray*}
I_{5}&\displaystyle=\sum_{1\leq l\leq k}C_{k}^{l}\int \nabla^{l}\lf(\f{1}{\rho^2}\rg) \nabla^{k-l}\Delta\phi \nabla^{k}\Delta\phi d\mathbf{x}\\
 &\displaystyle\lesssim \sum_{1\leq l\leq k} \| \nabla^{l}\lf(\f{1}{\rho^2}\rg) \nabla^{k-l}\Delta\phi\|\|\nabla^{k+2}\phi\|.
\end{eqnarray*}
If $1\leq l\leq \lf[\f{k+1}{2}\rg]$,   we get
$$
\begin{aligned}
&\displaystyle\|\nabla^{l}\lf(\f{1}{\rho^2}\rg) \nabla^{k-l}\Delta\phi\|\overset{\eqref{hh20}}\lesssim \|\nabla^{l}\sigma\|_{L^3}\|\nabla^{k-l+2}\phi\|_{L^6} \\
&\displaystyle\overset{\eqref{h20}}\lesssim \|\nabla^\alpha\sigma\|^{1-\f{l-1}{k+1}}\|\nabla^{k+1}\sigma\|^{\f{l-1}{k+1}}\|\nabla\phi\|^{\f{l-1}{k+1}}\|\nabla^{k+2}\phi\|^{1-\f{l-1}{k+1}}
\\
&\displaystyle\overset{\eqref{h310}}\lesssim M\|\nabla^{k+1}\sigma\|^{\f{l-1}{k+1}}\|\nabla^{k+2}\phi\|^{1-\f{l-1}{k+1}}\lesssim M\lf(\|\nabla^{k+1}\sigma\|+\|\nabla^{k+2}\phi\|\rg),
\end{aligned}$$
where $\alpha$ is defined by
\begin{eqnarray*}
 &&\displaystyle \frac{l-1}{3}=\lf(\frac{\alpha}{3}-\frac{1}{2}\rg)\lf(1-\f{l-1}{k+1}\rg)+\lf(\frac{k+1}{3}-\frac{1}{2}\rg)\f{l-1}{k+1} \\
 &&\displaystyle\Longrightarrow \alpha=\f{3}{2}+\f{l-1}{2(k+2-l)}\in \lf[\f{3}{2}, \f{5}{2}\rg].
\end{eqnarray*}
If $\lf[\f{k+1}{2}\rg]+1\leq l\leq k$, we get
$$
\begin{aligned}
&\|\nabla^{l}\lf(\f{1}{\rho^2}\rg) \nabla^{k-l}\Delta\phi\|\overset{\eqref{hh20}}\lesssim \|\nabla^{l}\sigma\|_{L^6}\|\nabla^{k-l+2}\phi\|_{L^3} \\
&\overset{\eqref{h20}}\lesssim \|\sigma\|^{1-\f{l+1}{k+1}}\|\nabla^{k+1}\sigma\|^{\f{l+1}{k+1}}\|\nabla^\alpha\phi\|^{\f{l+1}{k+1}}\|\nabla^{k+2}\phi\|^{1-\f{l+1}{k+1}}
\\
&\overset{\eqref{h310}}\lesssim M\|\nabla^{k+1}\sigma\|^{\f{l+1}{k+1}}\|\nabla^{k+2}\phi\|^{1-\f{l+1}{k+1}}\lesssim M\lf(\|\nabla^{k+1}\sigma\|+\|\nabla^{k+2}\phi\|\rg),
\end{aligned}$$
where $\alpha$ is defined by
\begin{eqnarray*}
&&\displaystyle\frac{k-l+1}{3}=\lf(\frac{\alpha}{3}-\frac{1}{2}\rg)\f{l+1}{k+1}+\lf(\frac{k+2}{3}-\frac{1}{2}\rg)\lf(1-\f{l+1}{k+1}\rg) \\
&&\displaystyle\Longrightarrow \alpha=1+\f{k+1}{2(l+1)}\in \lf[\f{3}{2},3\rg].
\end{eqnarray*}
Therefore,  we obtain
\begin{equation}\label{hr333}
|I_{5}|\lesssim  M\lf(\|\nabla^{k+1}\sigma\|^2+\|\nabla^{k+2}\phi\|^2\rg).\end{equation}
To estimate $I_{6}$, we rewrite it as
\begin{equation}\label{hh326}\begin{aligned}
I_{6}&=-\underline{\int\lf(\f{\phi^2-1}{\rho}\rg)|\nabla^{k+1}\phi|^2 d\mathbf{x}}_{I_{6}^1}-\underline{\int\nabla\lf(\f{\phi^2-1}{\rho}\rg)\nabla^{k}\phi\cdot\nabla^{k+1}\phi d\mathbf{x}}_{I_{6}^2}\\
&\qquad+\f{1}{\epsilon}\underline{\sum_{1\leq l\leq k}
	C_{k}^{l}\int \nabla^{l}\lf(\f{\phi^2-1}{\rho}\rg) \nabla^{k-l} \phi \nabla^{k}\Delta\phi d\mathbf{x}}_{I_{6}^3}.\end{aligned}
\end{equation}
For  $I_{6}^1$, we have
$$\begin{aligned}
I_{6}^1&\displaystyle\lesssim  \|\phi^2-1\|_{L^3}\|\nabla^{k+1}\phi\|\|\nabla^{k+1}\phi\|_{L^6}\\
&\displaystyle\overset{\eqref{h20}}\lesssim  \|\phi^2-1\|^{\f{1}{2}}\|\nabla\lf(\phi^2-1\rg)\|^\f{1}{2}\|\nabla^{k+1}\phi\|\|\nabla^{k+2}\phi\|\\
&\displaystyle\overset{\eqref{hr33}, \eqref{h310} }\lesssim M\lf(\|\nabla^{k+1}\phi\|^2+\|\nabla^{k+2}\phi\|^2\rg).
\end{aligned}
$$
For  $I_{6}^2$, we have
$$\begin{aligned}
I_{6}^2&\lesssim {\Big\|}\nabla\lf(\f{\phi^2-1}{\rho}\rg){\Big\|}  \|\nabla^{k}\phi\|_{L^6}\|\nabla^{k+1}\phi\|_{L^3}\\
&\overset{\eqref{hh20}, \eqref{h20}}\lesssim  \lf(\|\nabla\phi\|+\|\nabla\sigma\|\rg) \|\nabla^{k+1}\phi\|^\f{3}{2}\|\nabla^{k+2}\phi\|^\f{1}{2}\\
&\overset{\eqref{h310}}\lesssim M\lf(\|\nabla^{k+1}\phi\|^2 +\|\nabla^{k+2}\phi\|^2 \rg).
\end{aligned}
$$
For  $I_{6}^3$, we have
$$
I_{6}^3\lesssim \sum_{1\leq l\leq k} \|\nabla^{l}\lf(\f{\phi^2-1}{\rho}\rg) \nabla^{k-l} \phi\|\|\nabla^{k+2}\phi\|.
$$
If $1\leq l\leq \lf[\f{k}{2}\rg]$,   we get
$$
\begin{aligned}
&\|\nabla^{l}\lf(\f{\phi^2-1}{\rho}\rg) \nabla^{k-l} \phi\|\overset{\eqref{hh20}, \eqref{hr33}}\lesssim \lf(\|\nabla^{l}\sigma\|_{L^3}+\|\nabla^{l}\phi\|_{L^3}\rg)\|\nabla^{k-l}\phi\|_{L^6} \\
&\overset{\eqref{h20}}\lesssim\lf( \|\nabla^\alpha\sigma\|^{1-\f{l}{k}}\|\nabla^{k}\sigma\|^{\f{l}{k}}+ \|\nabla^\alpha\phi\|^{1-\f{l}{k}}\|\nabla^{k}\phi\|^{\f{l}{k}}\rg)\|\nabla\phi\|^{\f{l}{k}}\|\nabla^{k+1}\phi\|^{1-\f{l}{k}}
\\
&\overset{\eqref{h310}}\lesssim M\lf(\|\nabla^{k+1}\sigma\|+\|\nabla^{k+1}\phi\|\rg),
\end{aligned}$$
where $\alpha$ is defined by
$$
\frac{l-1}{3}=\lf(\frac{\alpha}{3}-\frac{1}{2}\rg)\lf(1-\f{l}{k}\rg)+\lf(\frac{k+1}{3}-\frac{1}{2}\rg)\f{l}{k}\Longrightarrow \alpha=\f{1}{2}+\f{l}{2k}\in \lf[\f{1}{2}, 1\rg].$$
If $\lf[\f{k}{2}\rg]+1\leq l\leq k$, we get
$$
\begin{aligned}
&\|\nabla^{l}\lf(\f{\phi^2-1}{\rho}\rg) \nabla^{k-l} \phi\|\overset{\eqref{hh20}, \eqref{hr33}}\lesssim \lf(\|\nabla^{l}\sigma\|_{L^6}+\|\nabla^{l}\phi\|_{L^6}\rg)\|\nabla^{k-l}\phi\|_{L^3} \\
&\overset{\eqref{h20}}\lesssim\lf(\|\sigma\|^{1-\f{l+1}{k+1}}\|\nabla^{k+1}\sigma\|^{\f{l+1}{k+1}}+\|\phi\|^{1-\f{l+1}{k+1}}\|\nabla^{k+1}\phi\|^{\f{l+1}{k+1}}\rg)\|\nabla^\alpha\phi\|^{\f{l+1}{k+1}}\|\nabla^{k+1}\phi\|^{1-\f{l+1}{k+1}}
\\
&\overset{\eqref{h310}}\lesssim M\lf(\|\nabla^{k+1}\sigma\|+\|\nabla^{k+1}\phi\|\rg),
\end{aligned}$$
where $\alpha$ is defined by
\begin{eqnarray*}
  &&\displaystyle\frac{k-l-1}{3}=\lf(\frac{\alpha}{3}-\frac{1}{2}\rg)\f{l+1}{k+1}+\lf(\frac{k+1}{3}-\frac{1}{2}\rg)\lf(1-\f{l+1}{k+1}\rg) \\
&&\displaystyle\Longrightarrow \alpha=\f{k+1}{2(l+1)}\in \lf[\f{1}{2},2\rg].
\end{eqnarray*}
Therefore,  we obtain
$$I_{6}^3\lesssim M\lf(\|\nabla^{k+1}\phi\|^2 +\|\nabla^{k+2}\phi\|^2 +\|\nabla^{k+1}\sigma\|^2 \rg).$$
Therefore,  we obtain from \eqref{hh326} and the estimates on $I_{9}^i (i=1,2,3)$ that
\begin{equation}\label{hh330}
|I_{6}|\lesssim  M\lf(\|\nabla^{k+1}\sigma\|^2+\|\nabla^{k+1}\phi\|^2+\|\nabla^{k+2}\phi\|^2\rg).\end{equation}
Substituting \eqref{hr323}, \eqref{hr333} and \eqref{hh330} into \eqref{hg328}, we have \eqref{hr329}.The proof of Lemma \ref{lem34} is completed.\end{proof}

The following lemma 3.3 is a higher-order estimate of  $(\sigma, \mathbf{u})$.
\begin{lemma}\label{lem32}  Under the  the  assumption \eqref{h310}, it holds   that,  for $k=0,1,2,3$,
	\begin{equation}
	\label{hr316}
	\f{d}{dt}\lf(\|\nabla^k\mathbf{u}\|^2
	+\|\nabla^k\sigma\|^2\rg)+\|\nabla^{k+1} \mathbf{u}\|^2\lesssim M\lf(\|\nabla^{k}
	\sigma\|^2+\|\nabla^{k+1}\phi\|^2\rg).
	\end{equation}
\end{lemma}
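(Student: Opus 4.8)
The plan is to carry out a Guo--Wang type $k$-th order energy estimate on the reformulated system \eqref{342}, treating the Allen--Cahn coupling term $\frac{\epsilon}{\bar\rho}\nabla\phi\Delta\phi$ as a forcing. First I would apply $\nabla^k$ to $\eqref{342}_1$ and $\eqref{342}_2$, then test the density equation against $\frac{p'(\bar\rho)}{\bar\rho^2}\nabla^k\sigma$ and the momentum equation against $\nabla^k\mathbf{u}$, and add the two identities. The weight $\frac{p'(\bar\rho)}{\bar\rho^2}$ is chosen precisely so that, after one integration by parts, the two linear coupling contributions $p'(\bar\rho)\bar\rho^{-1}\int\nabla^k\mathrm{div}\,\mathbf{u}\,\nabla^k\sigma$ and $\frac{p'(\bar\rho)}{\bar\rho}\int\nabla^{k+1}\sigma\cdot\nabla^k\mathbf{u}$ cancel. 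The viscous terms, after integration by parts, produce the nonnegative dissipation $\frac{\nu}{\bar\rho}\|\nabla^{k+1}\mathbf{u}\|^2+\frac{\nu+\lambda}{\bar\rho}\|\nabla^k\mathrm{div}\,\mathbf{u}\|^2$, which by \eqref{h14} is bounded below by $\frac{\nu}{\bar\rho}\|\nabla^{k+1}\mathbf{u}\|^2$. Since $\frac12\|\nabla^k\mathbf{u}\|^2+\frac{p'(\bar\rho)}{2\bar\rho^2}\|\nabla^k\sigma\|^2\backsimeq\|\nabla^k\mathbf{u}\|^2+\|\nabla^k\sigma\|^2$, it remains to bound the coupling term and the nonlinear contributions of $g_1,\mathbf{g}_2$ by $M\big(\|\nabla^k\sigma\|^2+\|\nabla^{k+1}\phi\|^2\big)$, up to a small multiple of $\|\nabla^{k+1}\mathbf{u}\|^2$ that is then absorbed on the left.

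The coupling term $\frac{\epsilon}{\bar\rho}\int\nabla^k(\nabla\phi\Delta\phi)\cdot\nabla^k\mathbf{u}$ is the crux, and I expect it to be the main obstacle. For $k\geq1$ I would integrate by parts once to move a derivative onto $\mathbf{u}$, rewriting it as $-\frac{\epsilon}{\bar\rho}\int\nabla^{k-1}(\nabla\phi\Delta\phi)\cdot\nabla^{k+1}\mathbf{u}$; this keeps the highest $\phi$-derivative at order $k+1$ (never $k+2$) and replaces the harmless factor $\nabla^k\mathbf{u}$ by the dissipative quantity $\nabla^{k+1}\mathbf{u}$. Expanding $\nabla^{k-1}(\nabla\phi\Delta\phi)$ by the Leibniz rule and estimating each piece with the Gagliardo--Nirenberg inequality \eqref{h20} so that a low-order factor bounded by $\|\nabla\phi\|_{H^2}\leq M$ is always split off, one gets $\|\nabla^{k-1}(\nabla\phi\Delta\phi)\|\lesssim M\|\nabla^{k+1}\phi\|$, whence Young's inequality gives $\leq\eta\|\nabla^{k+1}\mathbf{u}\|^2+CM\|\nabla^{k+1}\phi\|^2$. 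For $k=0$ the same term is treated by integrating by parts directly in $\int\nabla\phi\Delta\phi\cdot\mathbf{u}$ to rewrite it through $\int|\nabla\phi|^2\mathrm{div}\,\mathbf{u}$ and $\int\partial_j\phi\,\partial_i\phi\,\partial_iu_j$, and using $\||\nabla\phi|^2\|\leq\|\nabla\phi\|_{L^\infty}\|\nabla\phi\|\lesssim M\|\nabla\phi\|$ via $H^2\hookrightarrow L^\infty$; this again gives $\leq\eta\|\nabla\mathbf{u}\|^2+CM\|\nabla\phi\|^2$. The quadratic structure of $\nabla\phi\Delta\phi$ is exactly what furnishes the smallness factor $M$.

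The nonlinear terms coming from $g_1=-\mathrm{div}(\sigma\mathbf{u})$ and $\mathbf{g}_2$ are handled by the same lines as in \cite{W 12, TZ 11, GTY 16}. The only delicate point is to avoid a surviving $\|\nabla^{k+1}\sigma\|^2$, which is neither dissipative nor admissible on the right-hand side: in $\frac{p'(\bar\rho)}{\bar\rho^2}\int\nabla^k g_1\,\nabla^k\sigma$ the top-order term $\int\mathbf{u}\cdot\nabla^{k+1}\sigma\,\nabla^k\sigma$ is integrated by parts to $-\frac12\int\mathrm{div}\,\mathbf{u}\,|\nabla^k\sigma|^2\lesssim M\|\nabla^k\sigma\|^2$, and in $\int\nabla^k(h_1(\sigma)\nabla\sigma)\cdot\nabla^k\mathbf{u}$ the piece carrying $\nabla^{k+1}\sigma$ is integrated by parts onto $\mathbf{u}$ and bounded, using $\|h_1(\sigma)\|_{L^\infty}\lesssim\|\sigma\|_{H^2}\lesssim M$, by $\eta\|\nabla^{k+1}\mathbf{u}\|^2+CM\|\nabla^k\sigma\|^2$. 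All remaining commutator pieces are controlled by Leibniz, Lemma \ref{lem21}, Lemma \ref{lem22} and the a priori bound \eqref{h310}, producing $M$ times $\|\nabla^k\sigma\|^2$, $\|\nabla^{k+1}\phi\|^2$, or an absorbable $\|\nabla^{k+1}\mathbf{u}\|^2$. Finally, choosing $\eta$ small enough to absorb every $\eta\|\nabla^{k+1}\mathbf{u}\|^2$ into the dissipation and using the equivalence of the energy functional yields \eqref{hr316} for $k=0,1,2,3$.
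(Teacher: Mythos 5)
Your proposal is correct and follows essentially the same route as the paper: apply $\nabla^k$ to $\eqref{342}_{1,2}$, test against $\frac{p'(\bar\rho)}{\bar\rho^2}\nabla^k\sigma$ and $\nabla^k\mathbf{u}$ so the linear cross terms cancel and the viscous dissipation appears, bound the nonlinear $g_1,\mathbf{g}_2$ contributions as in \cite{W 12}, and treat the coupling term $\int\nabla^k(\nabla\phi\Delta\phi)\cdot\nabla^k\mathbf{u}$ by one integration by parts plus Leibniz and Gagliardo--Nirenberg so that only $\nabla^{k+1}\phi$ and an absorbable $\nabla^{k+1}\mathbf{u}$ survive. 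The only cosmetic difference is that the paper estimates the $k=1$ coupling term directly via $\|\nabla\phi\|_{L^\infty}\|\nabla^2\phi\|\|\nabla^2\mathbf{u}\|$ rather than integrating by parts, which yields the same bound.
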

\begin{proof} Applying $\nabla^k $ to $\eqref{342}_1$ and $\eqref{342}_2$ yields respectively that
\begin{equation}\label{347}
\begin{aligned}
& \nabla^k \sigma_{t}+\bar{\rho}{\rm div}\nabla^k \mathbf{u}=\nabla^k  g_1, \\
&\nabla^k  \mathbf{u}_t-\frac{\nu}{\bar{\rho}}\,\Delta \nabla^k  \mathbf{u}
-\frac{(\nu+\lambda)}{\bar{\rho}}\nabla^{k+1}  {\rm div}\mathbf{u}+\f{p'(\bar{\rho})}
{\bar{\rho}}\nabla^{k+1}\sigma+\frac{\epsilon}{\bar\rho} \nabla^k(\nabla\phi \Delta \phi)
=\nabla^k\mathbf{g}_2.
\end{aligned}
\end{equation}
Multiplying $\eqref{347}_2$  by $\nabla^k
\mathbf{u}$,  and using $\eqref{347}_1$ and \eqref{343},  we have
\begin{equation}\label{3412}\begin{aligned}
\f{1}{2}\f{d}{dt}\lf(\|\nabla^k\mathbf{u}\|^2
+\f{p'(\bar{\rho})}{\bar{\rho}^2}\|\nabla^k\sigma\|^2\rg)+\int\lf(\frac{\nu }{\bar{\rho}}| \nabla^{k+1}\mathbf{u}|^2
+\frac{(\nu+\lambda)}{\bar{\rho}}|{\rm div}\nabla^k\mathbf{u}|^2\rg)d\mathbf{x}=I_{7},
\end{aligned}
\end{equation}where
\begin{equation}\label{hh333}\begin{aligned}
&I_{7}=\f{p'(\bar{\rho})}{\bar{\rho}^2}\underline{\int \nabla^{k}\sigma \nabla^{k}{\rm div}(\sigma\mathbf{u})d\mathbf{x}}_{I_{7}^1}-\underline{\int \nabla^k[(\mathbf{u},\nabla)\mathbf{u}-h_1(\sigma)\nabla\sigma]\cdot \nabla^k\mathbf{u}d\mathbf{x}}_{I_{7}^2}\\
&\qquad-\underline{\int \nabla^k[h_2(\sigma)\lf(\nu\Delta\mathbf{u}+(\nu+\lambda)\nabla{\rm
		div} \mathbf{u}-\epsilon\nabla\phi\Delta\phi\rg)]\cdot \nabla^k\mathbf{u}d\mathbf{x}}_{I_{7}^3}\\
&\qquad-\frac{\epsilon}{\bar\rho}\underline{\int
	\nabla^k(\nabla\phi \Delta \phi)\cdot \nabla^k\mathbf{u}d\mathbf{x}}_{I_{7}^4}
+\underline{\epsilon\int \nabla^k \big[h_2(\sigma)\nabla\phi\Delta\phi\big]\cdot \nabla^k\mathbf{u}d\mathbf{x}}_{I_{7}^5},\end{aligned}\end{equation}
\vspace{0.2cm} We will give the energy estimate of  $I_{7}$ below.
By the same lines as in \cite[Lemma 2.1]{W 12}, we can derive that
$$
I_{7}^i\lesssim  M\lf(\|\nabla^{k}
\sigma\|^2+
\|\nabla^{k+1} \mathbf{u}\|^2\rg)\quad \text{for}\,\,\, i=1, 2, 3.$$
For $I_{7}^4$,   it is easy to check that
\begin{eqnarray*}
     &\displaystyle I_{7}^4\lesssim  \|\nabla\phi\|\|\nabla^2\phi\|_{L^\infty}\|\nabla\mathbf{u}\|\\
   &\displaystyle\qquad\qquad\qquad\qquad\overset{\eqref{h310}}\lesssim  M\lf(\|\nabla\phi\|^2+\|\nabla \mathbf{u}\|^2\rg),\qquad \mathrm{for} \ k=0.\end{eqnarray*}
 and
 \begin{eqnarray*}
  &\displaystyle I_{7}^4\lesssim  \|\nabla\phi\|_{L^\infty}\|\nabla^{2}\phi\|\|\nabla^{2}\mathbf{u}\| \\
  &\displaystyle\qquad\qquad\qquad \overset{\eqref{h310}}\lesssim  M\lf(\|\nabla^{2} \phi\|^2+\|\nabla^{2} \mathbf{u}\|^2\rg),\qquad \mathrm{for} \ k=1.
 \end{eqnarray*}
When $k\geq 2$,
using  Leibniz formula yields that
\begin{eqnarray*}
  &\displaystyle I_{7}^4=-\int \nabla^{k-1}[\nabla\phi \Delta \phi]\cdot \nabla^{k+1}\mathbf{u}d\mathbf{x} \\
  &\displaystyle\qquad\qquad\lesssim \sum_{0\leq l\leq k-1} \|\nabla^{l+1}\phi\nabla^{k-l+1}\phi\|\|\nabla^{k+1}\mathbf{u}\|.
\end{eqnarray*}
If $l\leq \lf[\f{k+1}{2}\rg]$,   we get
$$
\begin{aligned}
&\|\nabla^{l+1}\phi\nabla^{k-l+1}\phi\|\lesssim \|\nabla^{l+1}\phi\|_{L^3}\|\nabla^{k-l+1}\phi\|_{L^6}  \\
&\overset{\eqref{h20}}\lesssim \|\nabla^\alpha\phi\|^{1-\f{l-1}{k-1}}\|\nabla^{k+1}\phi\|^{\f{l-1}{k-1}}\|\nabla^2\phi\|^{\f{l-1}{k-1}}\|\nabla^{k+1}\phi\|^{1-\f{l-1}{k-1}}\\
&\overset{\eqref{h310}}\lesssim M\|\nabla^{k+1}\phi\|,
\end{aligned}$$
where $\alpha$ is defined by
$$
\frac{l}{3}=\lf(\frac{\alpha}{3}-\frac{1}{2}\rg)\lf(1-\f{l-1}{k-1}\rg)+\lf(\frac{k+1}{3}-\frac{1}{2}\rg)\f{l-1}{k-1}\rightarrow \alpha=2-\f{k-1}{2(k-l)}\in \lf[1, 2\rg].$$
If $\lf[\f{k+1}{2}\rg]+1\leq l\leq k-1$,   we get
$$
\begin{aligned}
&\displaystyle\|\nabla^{l+1}\phi\nabla^{k-l+1}\phi\|\lesssim \|\nabla^{l+1}\phi\|_{L^6}\|\nabla^{k-l+1}\phi\|_{L^3}  \\
&\displaystyle\overset{\eqref{h20}}\lesssim \|\nabla\phi\|^{1-\f{l+1}{k}}\|\nabla^{k+1}\phi\|^{\f{l+1}{k}}\|\nabla^\alpha\phi\|^{\f{l+1}{k}}\|\nabla^{k+1}\phi\|^{1-\f{l+1}{k}}\\
&\displaystyle\overset{\eqref{h310}}\lesssim M\|\nabla^{k+1}\phi\|,
\end{aligned}$$
where $\alpha$ is defined  by
$$
\frac{k-l}{3}=\lf(\frac{\alpha}{3}-\frac{1}{2}\rg)\f{l+1}{k}+\lf(\frac{k+1}{3}-\frac{1}{2}\rg)\lf(1-\f{l+1}{k}\rg)\rightarrow \alpha=1+\f{3k}{2(l+1)}\in \lf[2, 4\rg].$$
Therefore, we have
$$
I_{7}^4\lesssim  M\lf(
\|\nabla^{k+1} \phi\|^2+
\|\nabla^{k+1} \mathbf{u}\|^2\rg).$$
In a similar way, we obtain $$
I_{7}^5\lesssim  M\lf(
\|\nabla^{k+1} \phi\|^2+
\|\nabla^{k+1} \mathbf{u}\|^2\rg).$$
Substituting the estimates on $I_{7}^i (i=1,\cdots,5)$ into \eqref{hh333}, we have
\begin{equation}\label{hg332}
I_{7}\lesssim  M\lf(\|\nabla^{k}
\sigma\|^2+
\|\nabla^{k+1} \mathbf{u}\|^2+
\|\nabla^{k+1} \phi\|^2\rg).\end{equation}
Substituting \eqref{hg332} into \eqref{3412}, we have \eqref{hr316}.  The proof of Lemma \ref{lem32} is completed.
\end{proof}

\begin{lemma}\label{lem33}  Under the  the  assumption \eqref{h310}, it holds   that
	\begin{equation}
	\label{hr326}
	\begin{aligned}
	\f{d}{dt}&\int \nabla^{k} \mathbf{u}\cdot\nabla^{k+1}\sigma d\mathbf{x}+\|\nabla^{k+1} \sigma\|^2
	\\
	&\lesssim M\lf(
	\|\nabla^{k+2} \mathbf{u}\|^2+\|\nabla^{k+2}\phi\|^2\rg)+\|\nabla^{k+1} \mathbf{u}\|^2,\quad \mathrm{for}\ k=0,1,2.
	\end{aligned}\end{equation}
\end{lemma}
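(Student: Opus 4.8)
The plan is to run the standard cross-term (interpolation) argument that recovers the missing dissipation on $\nabla\sigma$, adapted here to the capillary coupling. Since $\eqref{347}_{2}$ already records $\nabla^k$ applied to the momentum equation, I would test it against $\nabla^{k+1}\sigma$ and integrate over $\mathbb{R}^3$. The decisive point is the pressure term, which produces $\frac{p'(\bar{\rho})}{\bar{\rho}}\int\nabla^{k+1}\sigma\cdot\nabla^{k+1}\sigma\,d\mathbf{x}=\frac{p'(\bar{\rho})}{\bar{\rho}}\|\nabla^{k+1}\sigma\|^2$, a genuinely positive multiple of the target dissipation by \eqref{h110} and \eqref{h315}. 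Every other term must then be shown either to assemble into the time derivative $\frac{d}{dt}\int\nabla^k\mathbf{u}\cdot\nabla^{k+1}\sigma\,d\mathbf{x}$, to be absorbable into a small fraction of $\|\nabla^{k+1}\sigma\|^2$, or to be bounded by the right-hand side of \eqref{hr326}.

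For the time-derivative contribution I would write $\int\nabla^k\mathbf{u}_t\cdot\nabla^{k+1}\sigma\,d\mathbf{x}=\frac{d}{dt}\int\nabla^k\mathbf{u}\cdot\nabla^{k+1}\sigma\,d\mathbf{x}-\int\nabla^k\mathbf{u}\cdot\nabla^{k+1}\sigma_t\,d\mathbf{x}$ and substitute $\nabla^k\sigma_t=-\bar{\rho}\,\nabla^k\mathrm{div}\,\mathbf{u}+\nabla^k g_1$ from $\eqref{347}_{1}$. Integration by parts turns the linear part into $\bar{\rho}\|\nabla^k\mathrm{div}\,\mathbf{u}\|^2$, which is controlled by $\|\nabla^{k+1}\mathbf{u}\|^2$ — exactly the $O(1)$ term allowed on the right of \eqref{hr326}. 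The contribution of $g_1=-\mathrm{div}(\sigma\mathbf{u})$ is quadratic in the perturbation, so by \eqref{h310} and the Gagliardo–Nirenberg inequality \eqref{h20} it carries a factor $M$ and is absorbable.

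The viscous terms $-\frac{\nu}{\bar{\rho}}\int\Delta\nabla^k\mathbf{u}\cdot\nabla^{k+1}\sigma\,d\mathbf{x}$ and $-\frac{\nu+\lambda}{\bar{\rho}}\int\nabla^{k+1}\mathrm{div}\,\mathbf{u}\cdot\nabla^{k+1}\sigma\,d\mathbf{x}$ pair $\nabla^{k+2}\mathbf{u}$ against $\nabla^{k+1}\sigma$; Young's inequality sends one factor into $\eta\|\nabla^{k+1}\sigma\|^2$, absorbed on the left, and leaves a constant multiple of $\|\nabla^{k+2}\mathbf{u}\|^2$, i.e. the $\|\nabla^{k+2}\mathbf{u}\|^2$ term appearing on the right of \eqref{hr326} (in the global assembly this linear term is, with a small weight, dominated by the higher-level dissipation of Lemma \ref{lem32}). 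The capillary term $\frac{\epsilon}{\bar{\rho}}\int\nabla^k(\nabla\phi\Delta\phi)\cdot\nabla^{k+1}\sigma\,d\mathbf{x}$ and the nonlinear term $\int\nabla^k\mathbf{g}_2\cdot\nabla^{k+1}\sigma\,d\mathbf{x}$ are where the smallness is harvested: writing out $\mathbf{g}_2$ from \eqref{343}, each summand is at least quadratic, so distributing $\nabla^k$ by the Leibniz rule and estimating with \eqref{h20}, the composite-function bound \eqref{hh20}, and the $L^\infty$ and energy bounds \eqref{hr33}, \eqref{hr32} of Lemma \ref{lem31}, every piece is controlled by $M\big(\|\nabla^{k+2}\mathbf{u}\|^2+\|\nabla^{k+2}\phi\|^2\big)$ plus an absorbable $\eta\|\nabla^{k+1}\sigma\|^2$; the capillary term in particular exploits that $\|\nabla\phi\|$ is of order $M$ to produce $M\|\nabla^{k+2}\phi\|^2$. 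Since many of these $\mathbf{g}_2$ estimates coincide with the bounds $I_{7}^i$ already established in the proof of Lemma \ref{lem32}, I would invoke them directly rather than re-derive them.

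Collecting all contributions and choosing $\eta$ small enough to absorb the finitely many $\eta\|\nabla^{k+1}\sigma\|^2$ terms into the good pressure term yields \eqref{hr326}. The main obstacle is the bookkeeping in the third paragraph: one must distribute the $k$ derivatives in the capillary and $\mathbf{g}_2$ nonlinearities so that, after interpolation by \eqref{h20}, every factor is either a low-order norm already bounded by $M$ in $X_{m,M}$ or one of the admissible top-order norms $\|\nabla^{k+1}\sigma\|$, $\|\nabla^{k+2}\mathbf{u}\|$, $\|\nabla^{k+2}\phi\|$ — and never a norm of $\phi$ or $\sigma$ of order higher than is available. The restriction $k\le 2$ is precisely what keeps all the required interpolation exponents in range.
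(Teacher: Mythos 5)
Your proposal follows the paper's proof essentially verbatim: you test $\eqref{347}_2$ against $\nabla^{k+1}\sigma$, extract the time derivative of the cross term together with the good pressure dissipation $\f{p'(\bar{\rho})}{\bar{\rho}}\|\nabla^{k+1}\sigma\|^2$, convert $\sigma_t$ via $\eqref{347}_1$ into $-\bar{\rho}\|{\rm div}\nabla^k\mathbf{u}\|^2$ plus a $g_1$ remainder, and estimate the capillary and $\mathbf{g}_2$ nonlinearities by the Leibniz rule, Gagliardo--Nirenberg and the composite-function bound exactly as in the paper's terms $I_{8}^1$--$I_{8}^5$. Your additional observation that the linear viscous terms contribute only an $O(1)$ (not $M$-small) multiple of $\|\nabla^{k+2}\mathbf{u}\|^2$, which is harmless because \eqref{h53} enters the final energy functional with a small weight $\eta$, is if anything more careful than the paper, whose decomposition \eqref{hg327} does not display those terms.
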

\begin{proof} Multiplying  $\eqref{347}_2$
by $\nabla^{k+1}\sigma$, and using $\eqref{347}_1$ and \eqref{343} , we have
\begin{equation}\label{3413}
\f{d}{dt}\int \nabla^{k} \mathbf{u}\cdot\nabla^{k+1}\sigma d\mathbf{x}+\f{p'(\bar{\rho})}{\bar{\rho}}\|\nabla^{k+1} \sigma\|^2-\bar{\rho}\int({\rm
	div}\nabla^{k}\mathbf{u})^2d\mathbf{x}=I_{8},
\end{equation}where
\begin{equation}\label{hg327}\begin{aligned}
&I_{8}=\underline{\int \nabla^{k}{\rm div}(\sigma\mathbf{u}) {\rm
		div}\nabla^{k}\mathbf{u}d\mathbf{x}}_{I_{8}^1}-\underline{\int \nabla^{k}\lf[
	(\mathbf{u},\nabla)\mathbf{u}-h_1(\sigma)\nabla\sigma\rg] \cdot \nabla^{k+1}\sigma d\mathbf{x}}_{I_{8}^2}\\
&\qquad-\underline{\int \nabla^{k}\lf[h_2(\sigma)\lf(\nu\Delta\mathbf{u}+(\nu+\lambda)\nabla{\rm
		div} \mathbf{u}\rg)\rg] \cdot\nabla^{k+1}\sigma d\mathbf{x}}_{I_{8}^3}\\
&\qquad-\epsilon\underline{\int \nabla^{k}\lf[\nabla\phi\Delta\phi\rg]\cdot\nabla^{k+1}\sigma d\mathbf{x}}_{I_{8}^4}
+\underline{\epsilon\int \nabla^k \big[h_2(\sigma)\nabla\phi\Delta\phi\big]\cdot \nabla^k\sigma d\mathbf{x}}_{I_{8}^5}.
\end{aligned}
\end{equation}
We   estimate  $I_{8}$.
By the same lines as in \cite[Lemma 2.2]{W 12}, we can derive that
$$\begin{aligned}
&\displaystyle I_{8}^1\lesssim  M\lf(\|\nabla^{k+1}
\sigma\|^2+
\|\nabla^{k+1} \mathbf{u}\|^2\rg),\\
&\displaystyle I_{8}^2+I_{8}^3\lesssim  M\lf(\|\nabla^{k+1}
\sigma\|^2+
\|\nabla^{k+2} \mathbf{u}\|^2\rg).
\end{aligned}
$$
For $I_{8}^4$,   it is easy to check that
$$
I_{8}^4\lesssim  \|\nabla\phi\|_{L^\infty}\|\nabla^2\phi\|\|\nabla\sigma\|\overset{\eqref{h310}}\lesssim  M\lf(\|\nabla^2\phi\|^2+
\|\nabla \sigma\|^2\rg),$$ for $k=0$.
When $k\geq1$,
using  Leibniz formula yields that
$$
I_{8}^4=\int \nabla^{k}[\nabla\phi \Delta \phi]\cdot \nabla^{k+1}\sigma d\mathbf{x}\lesssim \sum_{0\leq l\leq k} \|\nabla^{l+1}\phi\nabla^{k-l+1}\phi\|\|\nabla^{k+1}\sigma\|.$$
If $l\leq \lf[\f{k+1}{2}\rg]$,   we get
$$
\begin{aligned}
&\|\nabla^{l+1}\phi\nabla^{k-l+1}\phi\|\lesssim \|\nabla^{l+1}\phi\|_{L^3}\|\nabla^{k-l+1}\phi\|_{L^6}  \\
&\overset{\eqref{h20}}\lesssim \|\nabla^\alpha\phi\|^{1-\f{l-1}{k-1}}\|\nabla^{k+2}\phi\|^{\f{l-1}{k-1}}\|\nabla^3\phi\|^{\f{l-1}{k-1}}\|\nabla^{k+2}\phi\|^{1-\f{l-1}{k-1}}
\overset{\eqref{h310}}\lesssim M\|\nabla^{k+2}\phi\|,
\end{aligned}$$
where $\alpha$ is defined  by
$$
\frac{l}{3}=\lf(\frac{\alpha}{3}-\frac{1}{2}\rg)\lf(1-\f{l-1}{k-1}\rg)+\lf(\frac{k+2}{3}-\frac{1}{2}\rg)\f{l-1}{k-1}\rightarrow \alpha=3-\f{k-1}{2(k-l)}\in \lf[2,3\rg].$$
If $\lf[\f{k+1}{2}\rg]+1\leq l\leq k$,   we get
$$
\begin{aligned}
&\|\nabla^{l+1}\phi\nabla^{k-l+1}\phi\|\lesssim \|\nabla^{l+1}\phi\|_{L^6}\|\nabla^{k-l+1}\phi\|_{L^3}  \\
&\overset{\eqref{h20}}\lesssim \|\nabla^2\phi\|^{1-\f{l+1}{k}}\|\nabla^{k+2}\phi\|^{\f{l+1}{k}}\|\nabla^\alpha\phi\|^{\f{l+2}{k}}\|\nabla^{k+2}\phi\|^{1-\f{l+1}{k}}
\overset{\eqref{h310}}\lesssim M\|\nabla^{k+2}\phi\|,
\end{aligned}$$
where $\alpha$ is defined  by
\begin{eqnarray*}
&&\displaystyle\frac{k-l}{3}=\lf(\frac{\alpha}{3}-\frac{1}{2}\rg)\f{l+1}{k}+\lf(\frac{k+2}{3}-\frac{1}{2}\rg)\lf(1-\f{l+1}{k}\rg) \\
&&\displaystyle\Longrightarrow \alpha=2-\f{k}{2(l+1)}\in \lf[1, 2\rg].
\end{eqnarray*}
Therefore, we have
$$
I_{8}^4\lesssim  M\lf(
\|\nabla^{k+2} \phi\|^2+
\|\nabla^{k+1} \sigma\|^2\rg).$$
In a similar way, we obtain $$
I_{8}^5\lesssim  M\lf(
\|\nabla^{k+2} \phi\|^2+
\|\nabla^{k+1} \sigma\|^2\rg).$$
Substituting the estimates on $I_{8}^i (i=1,\cdots,5)$ into \eqref{hg327}, we have
\begin{equation}\label{hh332}
I_{8}\lesssim  M\lf(\|\nabla^{k+1}
\sigma\|^2+
\|\nabla^{k+1} \mathbf{u}\|^2+
\|\nabla^{k+2} \mathbf{u}\|^2+
\|\nabla^{k+2} \phi\|^2\rg).\end{equation}
Substituting \eqref{hh332} into \eqref{3413}, we have \eqref{hr326}.  The proof of Lemma \ref{lem33} is completed.\end{proof}



\vspace{0.2cm} Now in order to obtain the estimate of the decay of the solution over time, we will establish the evolution of  negative Sobolev norms on solutions to the system   \eqref{h17}-\eqref{hh19}.
\begin{lemma}\label{lem41}  Under the  assumption \eqref{h310}, it holds   that for $s\in (0, \f{1}{2}]$, we have
	\begin{equation}
		\label{hr41}\begin{aligned}
			&\f{d}{dt}\lf(\f{p'(\bar{\rho})}{\bar{\rho}^2}\|\Lambda^{-s} \sigma\|^2+\|\Lambda^{-s} \mathbf{u}\|^2+\|\Lambda^{-s} \nabla\phi\|^2+\|\Lambda^{-s} \lf(\phi^2-1\rg)\|^2\rg)\\
			&	\begin{aligned}
			\lesssim& \lf(\|\nabla\sigma\|_{H^2}^2+
			\|\nabla(\mathbf{u}, \nabla \phi)\|_{H^1}^2+\|\nabla(\phi^2-1)\|^2\rg)\times\\
			&\times \lf(\|\Lambda^{-s} \sigma\|+\|\Lambda^{-s} \mathbf{u}\|+\|\Lambda^{-s} \nabla\phi\|+\|\Lambda^{-s} \lf(\phi^2-1\rg)\|\rg)	\end{aligned}
		\end{aligned}
	\end{equation}and  for $s\in (\f{1}{2}, \f{3}{2})$, we have
	\begin{equation}
		\label{hr42}
		\begin{aligned}
			&\f{d}{dt}\lf(\f{p'(\bar{\rho})}{\bar{\rho}^2}\|\Lambda^{-s} \sigma\|^2+\|\Lambda^{-s} \mathbf{u}\|^2+\|\Lambda^{-s} \nabla\phi\|^2+\|\Lambda^{-s} \lf(\phi^2-1\rg)\|^2\rg)\\
			&	\begin{aligned}
			\lesssim &\|(\sigma, \mathbf{u}, \phi^2-1, \nabla\phi)\|^{s-\f{1}{2}}\lf(
			\|\nabla(\sigma, \mathbf{u},  \nabla\phi)\|_{H^1}+\|\nabla(\phi^2-1)\|\rg)^{\f{5}{2}-s}\times\\
			&\times\lf(\|\Lambda^{-s} \sigma\|+\|\Lambda^{-s} \mathbf{u}\|+\|\Lambda^{-s} \nabla\phi\|+\|\Lambda^{-s} \lf(\phi^2-1\rg)\|\rg).	\end{aligned}
	\end{aligned} 	\end{equation}
\end{lemma}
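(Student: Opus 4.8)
The plan is to apply $\Lambda^{-s}$ to each of the four evolution equations, pair the result in $L^2$ with the corresponding $\Lambda^{-s}$-variable, add up the four identities, and then control the nonlinear terms by the Hardy--Littlewood--Sobolev inequality (Lemma \ref{lem25}) combined with the Gagliardo--Nirenberg inequality (Lemma \ref{lem21}), Lemma \ref{lem22}, and the uniform bounds of Lemma \ref{lem31}. First I would treat the $(\sigma,\mathbf{u})$ block: apply $\Lambda^{-s}$ to $\eqref{342}_1$ and $\eqref{342}_2$ and take the inner products with $\frac{p'(\bar\rho)}{\bar\rho^2}\Lambda^{-s}\sigma$ and $\Lambda^{-s}\mathbf{u}$ respectively. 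With this weighting the linear acoustic cross terms coming from $\bar\rho\,\mathrm{div}\,\mathbf{u}$ and $\frac{p'(\bar\rho)}{\bar\rho}\nabla\sigma$ cancel, leaving $\frac12\frac{d}{dt}\bigl(\frac{p'(\bar\rho)}{\bar\rho^2}\|\Lambda^{-s}\sigma\|^2+\|\Lambda^{-s}\mathbf{u}\|^2\bigr)$ plus the nonnegative viscous dissipation, which I discard.

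For the phase field I would use the two reformulations already at hand. Pairing $\Lambda^{-s}$ of $\eqref{hg330}$ with $-\Lambda^{-s}\Delta\phi$ gives, after one integration by parts in the $\phi_t$ term, the quantity $\frac12\frac{d}{dt}\|\Lambda^{-s}\nabla\phi\|^2+\frac{\epsilon}{\bar\rho^2}\|\Lambda^{-s}\Delta\phi\|^2$; pairing $\Lambda^{-s}$ of the equation for $\phi^2-1$ obtained by multiplying $\eqref{hg330}$ by $2\phi$ with $\Lambda^{-s}(\phi^2-1)$ gives $\frac12\frac{d}{dt}\|\Lambda^{-s}(\phi^2-1)\|^2+\frac{\epsilon}{\bar\rho^2}\|\Lambda^{-s}\nabla(\phi^2-1)\|^2$ together with sign-definite damping from the term $\frac{2\phi^2}{\epsilon\rho}(\phi^2-1)$. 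Throughout, the variable coefficients $\frac{\epsilon}{\rho^2},\frac{1}{\epsilon\rho}$ are frozen at $\rho=\bar\rho$ and the remainders (e.g.\ $\frac{\epsilon}{\rho^2}-\frac{\epsilon}{\bar\rho^2}$) are moved into the nonlinear terms; the frozen-coefficient parts supply exactly the dissipation and damping just described, all of which I discard.

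The core is the nonlinear estimate. After Cauchy--Schwarz each nonlinearity reduces to bounding $\|\Lambda^{-s}(\text{product})\|$, and by Lemma \ref{lem25} with $\frac12+\frac{s}{3}=\frac{3+2s}{6}$ one has $\|\Lambda^{-s}(fg)\|\lesssim\|fg\|_{L^{6/(3+2s)}}$, after which Hölder splits the product into two Lebesgue factors whose admissible exponents depend on $s$. For $s\in(0,\frac12]$ one can always arrange the split so that each factor sits in $L^3$ or $L^6$; then the embedding $H^1\hookrightarrow L^6$ and Lemma \ref{lem22} turn everything into integer-order derivative norms, producing the clean quadratic form $\eqref{hr41}$. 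For $s\in(\frac12,\frac32)$ the required integrability forces one factor into $L^{3/s}$ with $\frac3s\in(2,6)$, where no integer embedding is available, so I would interpolate by Gagliardo--Nirenberg, $\|g\|_{L^{3/s}}\lesssim\|g\|_{L^2}^{\,s-1/2}\|\nabla g\|_{L^2}^{\,3/2-s}$, and this is precisely what spills the fractional powers in $\eqref{hr42}$.

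The main obstacle is the phase field, because $\|\phi\|_{L^\infty}$ is not small and $\phi\notin L^p$, so $\phi$ itself can never be placed in any norm; only the genuinely small, decaying quantities $\phi^2-1$ and $\nabla\phi$ may appear. The rewriting $\eqref{hg330}$ and the multiplication by $2\phi$ guarantee this structure, while the bound $\|\phi\|_{L^\infty}\lesssim1$ of Lemma \ref{lem31} together with $-1\le\phi\le1$ controls the remaining bounded factors of $\phi$. The most demanding term is the capillary coupling $\nabla\phi\,\Delta\phi$ (and its variable-coefficient variants): keeping its second-derivative factor in $L^2$ and sending the other to $L^{3/s}$, the interpolation $\|\nabla\phi\|_{L^{3/s}}\lesssim\|\nabla\phi\|_{L^2}^{\,s-1/2}\|\nabla^2\phi\|_{L^2}^{\,3/2-s}$ leaves one un-interpolated factor $\|\nabla^2\phi\|_{L^2}$, so that the high-regularity exponent becomes $(\frac32-s)+1=\frac52-s$ and the low-regularity exponent $s-\frac12$, matching $\eqref{hr42}$; every other nonlinearity yields a milder split absorbed into the same expression. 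Collecting the four blocks and dropping the nonnegative dissipation then gives $\eqref{hr41}$ and $\eqref{hr42}$.
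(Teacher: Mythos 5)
Your proposal is correct and follows essentially the same route as the paper: the same weighted pairing of $\Lambda^{-s}\eqref{342}_{1,2}$ with $\bigl(\tfrac{p'(\bar\rho)}{\bar\rho^2}\Lambda^{-s}\sigma,\Lambda^{-s}\mathbf{u}\bigr)$, the same pairings of $\Lambda^{-s}$ of \eqref{hg330} and of the $\phi^2-1$ equation with $-\Lambda^{-s}\Delta\phi$ and $\Lambda^{-s}(\phi^2-1)$, the same freezing of coefficients with sign-definite dissipation/damping discarded, and the same Hardy--Littlewood--Sobolev plus Gagliardo--Nirenberg treatment of the nonlinearities, including the correct exponent bookkeeping $\|\nabla\phi\|^{s-1/2}\|\nabla^2\phi\|^{5/2-s}$ for $s\in(\tfrac12,\tfrac32)$. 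The only imprecision is your claim that for $s\in(0,\tfrac12]$ the H\"older split lands in $L^3$ or $L^6$: the paper actually uses $L^{3/s}\times L^2$ with $3/s\ge 6$ and the interpolation $\|g\|_{L^{3/s}}\lesssim\|\nabla g\|^{1/2+s}\|\nabla^2 g\|^{1/2-s}$ followed by Young's inequality to reach the clean quadratic form \eqref{hr41}, but this is a detail that does not affect the argument.
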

\begin{proof}
Applying $\Lambda^{-s}$ to $\eqref{342}_1$ and  $\eqref{342}_2$, multiplying the resulting identities by $\f{P'(\bar{\rho})}{\bar{\rho}^2}\Lambda^{-s}\sigma$ and $\Lambda^{-s}\mathbf{u}$, respectively, summing up and using \eqref{343}, we deduce that

\begin{eqnarray}
	\label{hr44}
	\begin{aligned}
		&\f{1}{2}\f{d}{dt}\lf(\f{p'(\bar{\rho})}{\bar{\rho}^2}\|\Lambda^{-s} \sigma\|^2+\|\Lambda^{-s} \mathbf{u}\|^2\rg)+\f{\nu}{\bar{\rho}}\|\Lambda^{-s} \nabla\mathbf{u}\|^2+\f{\nu+\lambda}{\bar{\rho}}\|\Lambda^{-s} {\rm div}\mathbf{u}\|^2\\
&=-\epsilon\underline{\int \Lambda^{-s}\lf(\nabla\phi\Delta\phi\rg)\cdot\Lambda^{-s}\mathbf{u} d\mathbf{x}}_{I_{9}}
		-\f{p'(\bar{\rho})}{\bar{\rho}^2}\underline{\int \Lambda^{-s}\lf(\sigma{\rm div}\mathbf{u}+\nabla \sigma\cdot\mathbf{u}\rg)\Lambda^{-s}\sigma d\mathbf{x}}_{I_{10}}\\
		&-\underline{\int \Lambda^{-s}\lf[(\mathbf{u},\nabla)\mathbf{u}-h_1(\sigma)\nabla\sigma-h_2(\sigma)\lf(\nu\Delta\mathbf{u}+(\nu+\lambda)\nabla{\rm
				div} \mathbf{u}\rg)-\epsilon\nabla\phi\Delta\phi\rg]\cdot \Lambda^{-s}\mathbf{u}  d\mathbf{x}}_{I_{11}}.
	\end{aligned}
\end{eqnarray}
Also, we rewrite \eqref{hg330} as
\begin{equation}\label{hr330} \lf(\phi^2-1\rg)_t+ \mathbf{u}\cdot\nabla\lf(\phi^2-1\rg)-\f{\epsilon}{\rho^2}\Delta\lf(\phi^2-1\rg)+\f{2\epsilon}{\rho^2}|\nabla\phi|^2+ 2\f{\phi^2-1}{\epsilon\rho}\phi^2=0.\end{equation}Then, applying $\Lambda^{-s}$ to \eqref{hg330} and \eqref{hr330}, multiplying the resulting identities by $-\Lambda^{-s}\Delta\phi$  and $\Lambda^{-s}\lf(\phi^2-1\rg)$, respectively,  and summing up the resulting equations, we deduce that
\begin{equation}
\label{hh44}
\begin{aligned}
&\f{1}{2}\f{d}{dt}\lf(\|\Lambda^{-s} \nabla\phi\|^2+\|\Lambda^{-s} \lf(\phi^2-1\rg)\|^2\rg)+\f{\epsilon}{\bar{\rho}^2}\|\Lambda^{-s} \Delta\phi\|^2+\f{2\epsilon}{\bar{\rho}^2}\|\Lambda^{-s} \nabla\lf(\phi^2-1\rg)\|^2\\&
=
\underline{-\int \Lambda^{-s}\nabla\lf(\f{\phi^3-\phi}{\epsilon\rho}\rg)\Lambda^{-s}\nabla \phi d\mathbf{x}}_{I_{12}}\underline{-2\int \Lambda^{-s}\lf(\f{\phi^2-1}{\epsilon\rho}\phi^2\rg)\Lambda^{-s}\lf(\phi^2-1\rg) d\mathbf{x}}_{I_{13}}\\&+\underline{\int \Lambda^{-s}\nabla\lf(-\mathbf{u}\cdot\nabla\phi+h_3(\sigma)\Delta\phi\rg)\Lambda^{-s}\nabla \phi d\mathbf{x}}_{I_{14}}
\\
&+\underline{\int \Lambda^{-s}\lf[-\mathbf{u}\cdot\nabla\lf(\phi^2-1\rg)+2h_3(\sigma)\Delta\lf(\phi^2-1\rg)+\f{\epsilon}{\rho^2}|\nabla\phi|^2\rg]\Lambda^{-s}\lf(\phi^2-1\rg) d\mathbf{x}}_{I_{15}},
\end{aligned}
\end{equation}
where
$h_3(\sigma)=\f{\epsilon}{\bar{\rho}^2}-\f{\epsilon}{\rho^2}.$
In order to estimate the nonlinear terms in the right-hand side of \eqref{hr44} and \eqref{hh44}, we shall use the estimate \eqref{hg25}. This forces us to require that $s\in(0,\f{3}{2})$. If $s\in (0, \f{1}{2}]$, then $\f{1}{2}+\f{s}{3}<1$ and $\f{3}{s}\geq 6$. Then,  we have
$$\begin{aligned}
I_{9}&\lesssim \|\Lambda^{-s}\lf(\nabla\phi\Delta\phi\rg)\| \|\Lambda^{-s}\mathbf{u}\|
\\&\overset{\eqref{hg25}}\lesssim \|\nabla\phi\Delta\phi\|_{L^{\f{1}{\f{1}{2}+\f{s}{3}}}}\|\Lambda^{-s}\mathbf{u}\| \lesssim \|\nabla\phi\|_{L^\f{3}{s}}\|\nabla^2\phi\|\|\Lambda^{-s}\mathbf{u}\|
\\&
\overset{\eqref{h20}}\lesssim \|\nabla^2\phi\|^{\f{1}{2}+s}\|\nabla^3\phi\|^{\f{1}{2}-s} \|\nabla^2\phi\|\|\Lambda^{-s}\mathbf{u}\|
\lesssim\lf(\|\nabla^2\phi\|^2+\|\nabla^3\phi\|^2\rg)\|\Lambda^{-s}\mathbf{u}\|.
\end{aligned}$$
Further from this,  by the same arguments as above and in \cite[Section 3]{W 12},  we have
$$
I_{10}+I_{11}\lesssim \lf(\|\nabla
\sigma\|_{H^1}^2+
\|\nabla \mathbf{u}\|_{H^1}^2\rg)\lf(\|\Lambda^{-s} \sigma\|+\|\Lambda^{-s} \mathbf{u}\|\rg), \ \mathrm{for}\  s\in (0, \f{1}{2}].$$
The estimate on $I_{12}$ is more subtle. Next, we rewrite it as
\begin{equation}\label{hr45}
	\begin{aligned}
		&I_{12}= -\f{2}{\epsilon\bar{\rho}}\int|\nabla^{-s}\nabla \phi|^2 d\mathbf{x}+\f{2}{\epsilon}\underline{\int \Lambda^{-s}\lf[\lf( \f{1}{\bar{\rho}}-\f{1}{\rho}\rg)\nabla\phi\rg]\Lambda^{-s}\nabla \phi d\mathbf{x}}_{I_{12}^1}
		\\&-\f{3}{\epsilon}\underline{\int \Lambda^{-s}\lf( \f{\phi^2-1}{\rho}\nabla\phi\rg)\Lambda^{-s}\nabla \phi d\mathbf{x}}_{I_{12}^2}-\f{1}{\epsilon}\underline{\int \Lambda^{-s}\lf[\lf(\phi^3-\phi\rg)\nabla\lf(\f{1}{\rho}\rg)\rg]\Lambda^{-s}\nabla \phi d\mathbf{x}}_{I_{12}^3}.
\end{aligned} \end{equation}
Then, we have
$$\begin{aligned}
I_{12}^1&\lesssim {\Big\|}\Lambda^{-s}\lf[\lf( \f{1}{\rho}-\f{1}{\bar{\rho}}\rg)\nabla\phi\rg]{\Big\|} \|\Lambda^{-s}\nabla\phi\| \overset{\eqref{hg25}}\lesssim {\Big\|}\lf[\lf( \f{1}{\rho}-\f{1}{\bar{\rho}}\rg)\nabla\phi\rg]{\Big\|}_{L^{\f{1}{\f{1}{2}+\f{s}{3}}}}\|\Lambda^{-s}\nabla\phi\|\\
& \lesssim \|\sigma\|_{L^\f{3}{s}}\|\nabla\phi\| \|\Lambda^{-s}\nabla\phi\|\overset{\eqref{h20}, \eqref{hg24}}\lesssim \|\nabla\sigma\|^{\f{1}{2}+s}\|\nabla^2\sigma\|^{\f{1}{2}-s}\|\nabla^2\phi\|^{1-\theta}\|\Lambda^{-s}\nabla\phi\|^{1+\theta}
\\&
\lesssim \lf(\|\nabla\sigma\|+\|\nabla^2\sigma\|\rg) \lf(\|\nabla^2\phi\|+\|\Lambda^{-s}\nabla\phi\|\rg) \|\Lambda^{-s}\nabla\phi\|\\
&\overset{\eqref{h310}}
\lesssim\lf(\|\nabla\sigma\|_{H^1}^2+\|\nabla^2\phi\|^2\rg)\|\Lambda^{-s}\nabla\phi\|+M\|\Lambda^{-s}\nabla\phi\|^2\,\,\,\text{with}\,\,\theta=\f{1}{2+s},
\end{aligned}$$
$$\begin{aligned}
I_{12}^2&\lesssim {\Big\|}\Lambda^{-s}\lf( \f{\phi^2-1}{\rho}\nabla\phi\rg){\Big\|} \|\Lambda^{-s}\nabla\phi\|
\overset{\eqref{hg25}}\lesssim {\Big\|} \f{\phi^2-1}{\rho}\nabla\phi{\Big\|}_{L^{\f{1}{\f{1}{2}+\f{s}{3}}}}\|\Lambda^{-s}\nabla\phi\|\\
& \lesssim \|\phi^2-1\|_{L^\f{3}{s}}\|\nabla\phi\| \|\Lambda^{-s}\nabla\phi\|
\\
& \overset{\eqref{h20}, \eqref{hg24}}\lesssim \|\nabla(\phi^2-1)\|^{\f{1}{2}+s}\|\nabla^2(\phi^2-1)\|^{\f{1}{2}-s} \|\nabla^2\phi\|^{1-\theta}\|\Lambda^{-s}\nabla\phi\|^{1+\theta}
\\&
\lesssim \lf(\|\nabla(\phi^2-1)\|+\|\nabla^2(\phi^2-1)\|\rg) \lf(\|\nabla^2\phi\|+\|\Lambda^{-s}\nabla\phi\|\rg) \|\Lambda^{-s}\nabla\phi\|\\
&\overset{\eqref{h310}}
\lesssim\lf(\|\nabla(\phi^2-1)\|^2+\|\nabla^2\phi\|^2\rg)\|\Lambda^{-s}\nabla\phi\|+M\|\Lambda^{-s}\nabla\phi\|^2\,\,\,\text{with}\,\,\theta=\f{1}{2+s},
\end{aligned}
$$ and
$$\begin{aligned}
I_{12}^3&\lesssim {\Big\|}\Lambda^{-s}\lf((\phi^2-1)\phi\nabla(\f{1}{\rho})\rg){\Big\|} \|\Lambda^{-s}\nabla\phi\|
\overset{\eqref{hg25}}\lesssim {\Big\|}\lf((\phi^2-1)\phi\nabla(\f{1}{\rho})\rg){\Big\|}_{L^{\f{1}{\f{1}{2}+\f{s}{3}}}}\|\Lambda^{-s}\nabla\phi\|\\
&\overset{\eqref{hr33}} \lesssim \|\phi^2-1\|_{L^\f{3}{s}} \|\nabla\sigma\| \|\Lambda^{-s}\nabla\phi\|
\\
& \overset{\eqref{h20}}\lesssim \|\nabla(\phi^2-1)\|^{\f{1}{2}+s}\|\nabla^2(\phi^2-1)\|^{\f{1}{2}-s} \|\nabla\sigma\|\|\Lambda^{-s}\nabla\phi\|
\\&
\lesssim \lf(\|\nabla(\phi^2-1)\|+\|\nabla^2(\phi^2-1)\|\rg)\|\nabla\sigma\|\|\Lambda^{-s}\nabla\phi\|\\
&\overset{\eqref{h310}}
\lesssim\lf(\|\nabla(\phi^2-1)\|^2+\|\nabla^2\phi\|^2+\|\nabla\sigma\|^2\rg)\|\Lambda^{-s}\nabla\phi\|.
\end{aligned}$$  Therefore, we obtain from  \eqref{hr45} that
$$I_{12}+\f{1}{\epsilon\bar{\rho}}\|\Lambda^{-s}\nabla \phi\|^2 \lesssim\lf(\|\nabla\sigma\|_{H^1}^2+\|\nabla(\phi^2-1)\|^2+\|\nabla^2\phi\|^2\rg)\|\Lambda^{-s}\nabla\phi\|.$$
Similarly, for $I_{13}$, we rewrite it as
\begin{equation}\label{hh45}
\begin{aligned}
I_{13}=& -\f{2}{\epsilon\bar{\rho}}\int|\Lambda^{-s}\lf(\phi^2-1\rg)|^2 d\mathbf{x}+\f{2}{\epsilon}\underline{\int \Lambda^{-s}\lf[\lf( \f{1}{\bar{\rho}}-\f{1}{\rho}\rg)\lf(\phi^2-1\rg)\rg]\Lambda^{-s}\lf(\phi^2-1\rg) d\mathbf{x}}_{I_{13}^1}
\\&-\f{2}{\epsilon}\underline{\int \Lambda^{-s}\lf( \f{\lf(\phi^2-1\rg)^2}{\rho}\rg)\Lambda^{-s}\lf(\phi^2-1\rg) d\mathbf{x}}_{I_{13}^2}.
\end{aligned} \end{equation}
Then, we have

$$\begin{aligned}
I_{13}^1&\lesssim {\Big\|}\Lambda^{-s}\lf[\lf( \f{1}{\rho}-\f{1}{\bar{\rho}}\rg)\lf(\phi^2-1\rg)\rg]{\Big\|} \|\Lambda^{-s}\lf(\phi^2-1\rg)\|\\
& \lesssim \|\sigma\|_{L^\f{3}{s}}\|\phi^2-1\| \|\Lambda^{-s}\lf(\phi^2-1\rg)\|\\
&\overset{\eqref{h20}, \eqref{hg24}}\lesssim \|\nabla\sigma\|^{\f{1}{2}+s}\|\nabla^2\sigma\|^{\f{1}{2}-s}\|\nabla\lf(\phi^2-1\rg)\|^{1-\theta}\|\Lambda^{-s}\lf(\phi^2-1\rg)\|^{1+\theta}
\\&
\lesssim \lf(\|\nabla\sigma\|+\|\nabla^2\sigma\|\rg) \lf(\|\nabla\lf(\phi^2-1\rg)\|+\|\Lambda^{-s}\lf(\phi^2-1\rg)\|\rg) \|\Lambda^{-s}\lf(\phi^2-1\rg)\|\\
&\overset{\eqref{h310}}
\lesssim\lf(\|\nabla\sigma\|_{H^1}^2+\|\nabla\lf(\phi^2-1\rg)\|^2\rg)\|\Lambda^{-s}\lf(\phi^2-1\rg)\|+M\|\Lambda^{-s}\lf(\phi^2-1\rg)\|^2,
\end{aligned}$$ and
$$\begin{aligned}
&\displaystyle I_{13}^2\lesssim {\Big\|}\Lambda^{-s}\lf( \f{\lf(\phi^2-1\rg)^2}{\rho}\rg){\Big\|} \|\Lambda^{-s}\lf(\phi^2-1\rg)\|
 \lesssim \|\phi^2-1\|_{L^\f{3}{s}}\|\phi^2-1\| \|\Lambda^{-s}\lf(\phi^2-1\rg)\|
\\
& \displaystyle\overset{\eqref{h20}, \eqref{hg24}}\lesssim \|\nabla(\phi^2-1)\|^{\f{1}{2}+s}\|\nabla^2(\phi^2-1)\|^{\f{1}{2}-s} \|\nabla\lf(\phi^2-1\rg)\|^{1-\theta}\|\Lambda^{-s}\lf(\phi^2-1\rg)\|^{1+\theta}
\\&\displaystyle
\lesssim \lf(\|\nabla(\phi^2-1)\|+\|\nabla^2(\phi^2-1)\|\rg) \lf(\|\nabla\lf(\phi^2-1\rg)\|+\|\Lambda^{-s}\lf(\phi^2-1\rg)\|\rg) \|\Lambda^{-s}\lf(\phi^2-1\rg)\|\\
&\displaystyle\overset{\eqref{h310}}
\lesssim\lf(\|\nabla(\phi^2-1)\|^2+\|\nabla^2\phi\|^2\rg)\|\Lambda^{-s}\lf(\phi^2-1\rg)\|+M\|\Lambda^{-s}\lf(\phi^2-1\rg)\|^2,\ \mathrm{with}  \ \theta=\f{1}{2+s}.
\end{aligned}
$$Therefore, we obtain from  \eqref{hh45} that
$$
I_{13}+\f{2}{\epsilon\bar{\rho}}\|\Lambda^{-s}\lf(\phi^2-1\rg)\|^2
\lesssim\lf(\|\nabla\sigma\|_{H^1}^2+\|\nabla(\phi^2-1)\|^2+\|\nabla^2\phi\|^2\rg)\|\Lambda^{-s}\lf(\phi^2-1\rg)\|.
$$
For $I_{14}$, we can prove that
$$\begin{aligned}
&\displaystyle I_{14}\lesssim \|\Lambda^{-s}\nabla\lf(-\mathbf{u}\cdot\nabla\phi+h_3(\sigma)\Delta\phi\rg)\| \|\Lambda^{-s}\nabla\phi\|\\
&\displaystyle\overset{\eqref{hg25}}\lesssim \|\nabla\lf(-\mathbf{u}\cdot\nabla\phi+h_3(\sigma)\Delta\phi\rg)\|_{L^{\f{1}{\f{1}{2}+\f{s}{3}}}}\|\Lambda^{-s}\nabla\phi\|\\
&\displaystyle\overset{\eqref{hh20}} \lesssim\lf(\|\mathbf{u}\|_{L^\f{3}{s}} \|\nabla^2\phi\|+\|\nabla\phi\|_{L^\f{3}{s}} \|\nabla\mathbf{u}\|+\|\sigma\|_{L^\f{3}{s}} \|\nabla^3\phi\|+\|\nabla\sigma\|_{L^\f{3}{s}} \|\nabla^2\phi\|\rg)  \|\Lambda^{-s}\nabla\phi\|\\
&\displaystyle\lesssim\lf(\|\mathbf{u}\|_{L^\f{3}{s}} +\|\nabla\phi\|_{L^\f{3}{s}} +\|\sigma\|_{L^\f{3}{s}} +\|\nabla\sigma\|_{L^\f{3}{s}} \rg) (\|\nabla\mathbf{u}\|+\|\nabla^2\phi\|_{H^1}) \|\Lambda^{-s}\nabla\phi\|\\
&\displaystyle\overset{\eqref{h20}}\lesssim(\|\nabla\mathbf{u}\|^{\f{1}{2}+s}\|\nabla^2\mathbf{u}\|^{\f{1}{2}-s}+\|\nabla^2\phi\|^{\f{1}{2}+s}\|\nabla^3\phi\|^{\f{1}{2}-s}+\|\nabla\sigma\|^{\f{1}{2}+s}\|\nabla^2\sigma\|^{\f{1}{2}-s}\\
&\displaystyle\qquad+\|\nabla^2\sigma\|^{\f{1}{2}+s}\|\nabla^3\sigma\|^{\f{1}{2}-s})(\|\nabla\mathbf{u}\|+\|\nabla^2\phi\|_{H^1}) \|\Lambda^{-s}\nabla\phi\|\\
&\displaystyle \lesssim\lf(\|\nabla\mathbf{u}\|_{H^1}^2+\|\nabla^2\phi\|_{H^1}^2+\|\nabla\sigma\|_{H^2}^2\rg)\|\Lambda^{-s}\nabla\phi\|.
\end{aligned}$$
Last, for $I_{15}$, we get
$$\begin{aligned}
&\displaystyle I_{15}\lesssim {\Big \|}\Lambda^{-s}\lf[-\mathbf{u}\cdot\nabla\lf(\phi^2-1\rg)+2h_3(\sigma)\Delta\lf(\phi^2-1\rg)+\f{2\epsilon}{\rho^2}|\nabla\phi|^2\rg]{\Big \|} \|\Lambda^{-s}\lf(\phi^2-1\rg)\|\\
&\displaystyle\overset{\eqref{hg25}}\lesssim {\Big \|}-\mathbf{u}\cdot\nabla\lf(\phi^2-1\rg)+2h_3(\sigma)\Delta\lf(\phi^2-1\rg)+\f{2\epsilon}{\rho^2}|\nabla\phi|^2{\Big \|}_{L^{\f{1}{\f{1}{2}+\f{s}{3}}}}\|\Lambda^{-s}\lf(\phi^2-1\rg)\|\\
&\displaystyle \lesssim\lf(\|\mathbf{u}\|_{L^\f{3}{s}} \|\nabla\lf(\phi^2-1\rg)\|+\|\sigma\|_{L^\f{3}{s}} \|\nabla^2\lf(\phi^2-1\rg)\|+\|\nabla\phi\|_{L^\f{3}{s}} \|\nabla\phi\|\rg)  \|\Lambda^{-s}\lf(\phi^2-1\rg)\|\\
&\displaystyle \begin{aligned} \overset{\eqref{h20}}\lesssim &\lf(\|\nabla\mathbf{u}\|^{\f{1}{2}+s}\|\nabla^2\mathbf{u}\|^{\f{1}{2}-s}\|\nabla\lf(\phi^2-1\rg)\|+\|\nabla\sigma\|^{\f{1}{2}+s}\|\nabla^2\sigma\|^{\f{1}{2}-s}\|\nabla^2\phi\|\rg) \|\Lambda^{-s}\lf(\phi^2-1\rg)\|
\\&\displaystyle\overset{\eqref{hg24}}
+\|\nabla^2\phi\|^{\f{1}{2}+s}\|\nabla^3\phi\|^{\f{1}{2}-s}\|\nabla^2\phi\|^{1-\theta} \|\Lambda^{-s}\nabla\phi\|^\theta\|\Lambda^{-s}\lf(\phi^2-1\rg)\| \end{aligned}\\
&\displaystyle\begin{aligned}
\lesssim&\lf(\|\nabla\mathbf{u}\|_{H^1}^2+\|\nabla\lf(\phi^2-1\rg)\|^2+\|\nabla^2\phi\|_{H^1}^2+\|\nabla\sigma\|_{H^1}^2\rg)\|\Lambda^{-s}\lf(\phi^2-1\rg)\|\\
&\displaystyle+M\lf(\|\Lambda^{-s}\nabla\phi\|^2+\|\Lambda^{-s}\lf(\phi^2-1\rg)\|^2\rg),\ \mathrm{with}\  \theta=\f{1}{2+s}.\end{aligned}
\end{aligned}$$
Substituting the estimates on $I_{i} (i=9, \cdots, 15)$  into \eqref{hr44} and \eqref{hh44}, respectively, we obtain \eqref{hr41}.

\vspace{0.2cm} Next, we derive \eqref{hr42}. To this end, for $s\in (\f{1}{2}, \f{3}{2})$,  we shall estimate the right-hand sides of \eqref{hr44} and \eqref{hh44} in a different way. Since $s\in (\f{1}{2}, \f{3}{2})$, we have that  $\f{1}{2}+\f{s}{3}<1$ and $2<\f{3}{s}<6$.
Then,  we have
$$\begin{aligned}
I_{9}&\lesssim \cdots\cdots
\lesssim \|\nabla\phi\|_{L^\f{3}{s}}\|\nabla^2\phi\|\|\Lambda^{-s}\mathbf{u}\|
\\
&
\overset{\eqref{h20}}\lesssim \|\nabla\phi\|^{s-\f{1}{2}}\|\nabla^2\phi\|^{\f{5}{2}-s} \|\Lambda^{-s}\mathbf{u}\|.
\end{aligned}$$
Also, for $s\in (\f{1}{2}, \f{3}{2})$, by the same arguments as in \cite[Section 3]{W 12},  we have
$$
I_{10}+I_{11}\lesssim \|(\sigma, \mathbf{u})\|^{s-\f{1}{2}} \|\nabla(
\sigma, \mathbf{u})\|_{H^1}^{\f{5}{2}-s}\lf(\|\Lambda^{-s} \sigma\|+\|\Lambda^{-s} \mathbf{u}\|\rg).
$$
For the estimate on $I_{12}$, using \eqref{hr45}, we have
$$\begin{aligned}
I_{12}^1&\lesssim \cdots\cdots\lesssim \|\sigma\|_{L^\f{3}{s}}\|\nabla\phi\| \|\Lambda^{-s}\nabla\phi\|
\\&
\overset{\eqref{h20}, \eqref{hg24}}\lesssim \|\sigma\|^{s-\f{1}{2}}\|\nabla\sigma\|^{\f{3}{2}-s}\|\nabla^2\phi\|^{1-\theta}\|\Lambda^{-s}\nabla\phi\|^{1+\theta}\\
&
\lesssim\|\sigma\|^{s-\f{1}{2}}\|\nabla\sigma\|^{\f{3}{2}-s}\lf(\|\nabla^2\phi\|+\|\Lambda^{-s}\nabla\phi\|\rg) \|\Lambda^{-s}\nabla\phi\|\\
&\overset{\eqref{h310}}
\lesssim\|\sigma\|^{s-\f{1}{2}}\|\nabla\sigma\|^{\f{3}{2}-s}\|\nabla^2\phi\|\|\Lambda^{-s}\nabla\phi\|+M\|\Lambda^{-s}\nabla\phi\|^2, \,\,\,\text{with}\,\,\theta=\f{1}{2+s},
\end{aligned}$$
$$\begin{aligned}
I_{12}^2&\lesssim \cdots\cdots\lesssim \|\phi^2-1\|_{L^\f{3}{s}}\|\nabla\phi\| \|\Lambda^{-s}\nabla\phi\|
\\
& \overset{\eqref{h20}, \eqref{hg24}}\lesssim \|\phi^2-1\|^{s-\f{1}{2}}\|\nabla(\phi^2-1)\|^{\f{3}{2}-s} \|\nabla^2\phi\|^{1-\theta}\|\Lambda^{-s}\nabla\phi\|^{1+\theta}
\\&
\lesssim \|\phi^2-1\|^{s-\f{1}{2}}\|\nabla(\phi^2-1)\|^{\f{3}{2}-s} \lf(\|\nabla^2\phi\|+\|\Lambda^{-s}\nabla\phi\|\rg) \|\Lambda^{-s}\nabla\phi\|\\
&\overset{\eqref{h310}}
\lesssim\|\phi^2-1\|^{s-\f{1}{2}}\|\nabla(\phi^2-1)\|^{\f{3}{2}-s}\|\nabla^2\phi\|\|\Lambda^{-s}\nabla\phi\|+M\|\Lambda^{-s}\nabla\phi\|^2,\,\,\,\text{with}\,\,\theta=\f{1}{2+s},
\end{aligned}
$$ and
$$\begin{aligned}
I_{12}^3&\lesssim \cdots\cdots \lesssim \|\phi^2-1\|_{L^\f{d}{3}} \|\nabla\sigma\| \|\Lambda^{-s}\nabla\phi\|
\\
& \overset{\eqref{h20}}\lesssim \|\phi^2-1\|^{s-\f{1}{2}}\|\nabla(\phi^2-1)\|^{\f{3}{2}-s} \|\nabla\sigma\|\|\Lambda^{-s}\nabla\phi\|.
\end{aligned}$$  Therefore, we obtain from  \eqref{hr45} that
$$I_{12}+\f{1}{\epsilon\bar{\rho}}\|\Lambda^{-s}\nabla \phi\|^2\lesssim\|(\sigma, \phi^2-1)\|^{s-\f{1}{2}}\|\nabla(\sigma, \phi^2-1, \nabla\phi)\|^{\f{3}{2}-s}\|\Lambda^{-s}\nabla\phi\|.$$
Similarly, for the estimate on $I_{13}$, using \eqref{hh45}, we have
$$\begin{aligned}
&\displaystyle I_{13}^1\lesssim\cdots\cdots \lesssim \|\sigma\|_{L^\f{3}{s}}\|\phi^2-1\| \|\Lambda^{-s}\lf(\phi^2-1\rg)\|\\
&\displaystyle\overset{\eqref{h20}, \eqref{hg24}}\lesssim \|\sigma\|^{s-\f{1}{2}}\|\nabla\sigma\|^{\f{3}{2}-s}\|\nabla\lf(\phi^2-1\rg)\|^{1-\theta}\|\Lambda^{-s}\lf(\phi^2-1\rg)\|^{1+\theta}
\\&\displaystyle
\lesssim \|\sigma\|^{s-\f{1}{2}}\|\nabla\sigma\|^{\f{3}{2}-s} \lf(\|\nabla\lf(\phi^2-1\rg)\|+\|\Lambda^{-s}\lf(\phi^2-1\rg)\|\rg) \|\Lambda^{-s}\lf(\phi^2-1\rg)\|\\
&\displaystyle\overset{\eqref{h310}}
\lesssim\|\sigma\|^{s-\f{1}{2}}\|\nabla\sigma\|^{\f{3}{2}-s}\|\nabla\lf(\phi^2-1\rg)\|^\|\Lambda^{-s}\lf(\phi^2-1\rg)\|+M\|\Lambda^{-s}\lf(\phi^2-1\rg)\|^2,
\end{aligned}$$ and
$$\begin{aligned}
&\displaystyle I_{13}^2\lesssim \cdots\cdots\lesssim \|\phi^2-1\|_{L^\f{3}{s}}\|\phi^2-1\| \|\Lambda^{-s}\lf(\phi^2-1\rg)\|
\\
&\displaystyle \overset{\eqref{h20}, \eqref{hg24}}\lesssim \|\phi^2-1\|^{s-\f{1}{2}}\|\nabla(\phi^2-1)\|^{\f{3}{2}-s} \|\nabla\lf(\phi^2-1\rg)\|^{1-\theta}\|\Lambda^{-s}\lf(\phi^2-1\rg)\|^{1+\theta}
\\&\displaystyle
\lesssim \|\phi^2-1\|^{s-\f{1}{2}}\|\nabla(\phi^2-1)\|^{\f{3}{2}-s} \lf(\|\nabla\lf(\phi^2-1\rg)\|+\|\Lambda^{-s}\lf(\phi^2-1\rg)\|\rg) \|\Lambda^{-s}\lf(\phi^2-1\rg)\|\\
&\displaystyle\overset{\eqref{h310}}
\lesssim\|\phi^2-1\|^{s-\f{1}{2}}\|\nabla(\phi^2-1)\|^{\f{5}{2}-s}\|\Lambda^{-s}\lf(\phi^2-1\rg)\|+M\|\Lambda^{-s}\lf(\phi^2-1\rg)\|^2,\ \mathrm{with}  \ \theta=\f{1}{2+s}.
\end{aligned}
$$ Therefore, we obtain from  \eqref{hh45} that
$$
I_{13}+\f{2}{\epsilon\bar{\rho}}\|\Lambda^{-s}\lf(\phi^2-1\rg)\|^2
\lesssim\|(\sigma, \phi^2-1)\|^{s-\f{1}{2}}\|\nabla(\sigma, \phi^2-1)\|^{\f{3}{2}-s}\|\Lambda^{-s}\lf(\phi^2-1\rg)\|.
$$
For $I_{14}$,
we can prove that
$$\begin{aligned}
I_{14}&\lesssim\cdots\cdots
\\
&\lesssim\lf(\|\mathbf{u}\|_{L^\f{3}{s}} +\|\nabla\phi\|_{L^\f{3}{s}} +\|\sigma\|_{L^\f{3}{s}} +\|\nabla\sigma\|_{L^\f{3}{s}} \rg) (\|\nabla\mathbf{u}\|+\|\nabla^2\phi\|_{H^1}) \|\Lambda^{-s}\nabla\phi\|\\
& \begin{aligned} \overset{\eqref{h20}}\lesssim &(\|\mathbf{u}\|^{s-\f{1}{2}}\|\nabla\mathbf{u}\|^{\f{3}{2}-s}+\|\nabla\phi\|^{s-\f{1}{2}}\|\nabla^2\phi\|^{\f{3}{2}-s}+\|\sigma\|^{s-\f{1}{2}}\|\nabla\sigma\|^{\f{3}{2}-s}
\\&
+\|\nabla\sigma\|^{s-\f{1}{2}}\|\nabla^2\sigma\|^{\f{3}{2}-s})(\|\nabla\mathbf{u}\|+\|\nabla^2\phi\|_{H^1}) \|\Lambda^{-s}\nabla\phi\|\end{aligned}\\
&\begin{aligned}
\lesssim&\|(\sigma, \mathbf{u}, \nabla\phi)\|^{s-\f{1}{2}}\|\nabla(\sigma, \mathbf{u}, \nabla\phi)\|^{\f{3}{2}-s}(\|\nabla\mathbf{u}\|+\|\nabla^2\phi\|_{H^1})\|\Lambda^{-s}\nabla\phi\|\\
&+\lf(\|\nabla\sigma\|^2_{H^1}+\|\nabla\mathbf{u}\|^2+\|\nabla^2\phi\|^2_{H^1}\rg)\|\Lambda^{-s}\nabla\phi\|.\end{aligned}
\end{aligned}$$
Last, for $I_{15}$, we get
$$\begin{aligned}
I_{15}&\displaystyle\lesssim\cdots\cdots\\
& \displaystyle\lesssim\lf(\|\mathbf{u}\|_{L^\f{3}{s}} \|\nabla\lf(\phi^2-1\rg)\|+\|\sigma\|_{L^\f{3}{s}} \|\nabla^2\lf(\phi^2-1\rg)\|+\|\nabla\phi\|_{L^\f{3}{s}} \|\nabla\phi\|\rg)  \|\Lambda^{-s}\lf(\phi^2-1\rg)\|
\\
& \displaystyle\begin{aligned} \overset{\eqref{h20}}\lesssim &\lf(\|\mathbf{u}\|^{s-\f{1}{2}}\|\nabla\mathbf{u}\|^{\f{3}{2}-s}\|\nabla\lf(\phi^2-1\rg)\|+\|\sigma\|^{s-\f{1}{2}}\|\nabla\sigma\|^{\f{3}{2}-s}\|\nabla^2\phi\|\rg) \|\Lambda^{-s}\lf(\phi^2-1\rg)\|
\\&\displaystyle\overset{\eqref{hg24}}
+\|\nabla\phi\|^{s-\f{1}{2}}\|\nabla^2\phi\|^{\f{3}{2}-s}\|\nabla^2\phi\|^{1-\theta} \|\Lambda^{-s}\nabla\phi\|^\theta\|\Lambda^{-s}\lf(\phi^2-1\rg)\| \end{aligned}\\
&\displaystyle\lesssim\lf(\|\mathbf{u}\|^{s-\f{1}{2}}\|\nabla\mathbf{u}\|^{\f{3}{2}-s}+\|\sigma\|^{s-\f{1}{2}}\|\nabla\sigma\|^{\f{3}{2}-s}+\|\nabla\phi\|^{s-\f{1}{2}}\|\nabla^2\phi\|^{\f{3}{2}-s}\rg)\times\\
&\displaystyle\times \lf(\|\nabla\lf(\phi^2-1\rg)\|+\|\nabla^2\phi\|\rg)\|\Lambda^{-s}\lf(\phi^2-1\rg)\|+M\lf(\|\Lambda^{-s}\nabla\phi\|^2+\|\Lambda^{-s}\lf(\phi^2-1\rg)\|^2\rg),\end{aligned}
$$ with  $\displaystyle\theta=\f{1}{2+s}$.
Substituting the estimates on $I_{i} (i=9, \cdots, 15)$  into \eqref{hr44} and \eqref{hh44}, respectively, we obtain \eqref{hr42}.  The proof of Lemma \ref{lem41} is completed.\end{proof}

\section{The proof of Theorem \ref{theo11}}
\setcounter{equation}{0}

\hspace{2em}In this section, we shall combine all the estimates that we have derived in the previous
section and the Sobolev interpolation to prove Theorem \ref{theo11}.

\subsection{The a priori estimate and the existence for global solutions}\label{subs1}

\hspace{2em}In order to extend the local solution to global solution continuously,  we first give a prior estimate by using the series Lemmas in section 2.
\begin{proposition}  \label{estimate} \textbf{(a priori estimate).}
There is positive constant $m_1>0$ and $M_1>0$,  then, if  $(\rho,	\mathbf{u}, \phi)\in X_{m_1,M_1}\big([0,T^*]\big)$
\begin{equation}\label{a priori estimate}
\begin{aligned}
&\sup_{t\in[0,T]}\Big\{\|(\rho-\bar{\rho}, \mathbf{u})(t)\|^2_{H^3}+ \|\nabla \phi(t)\|^2_{H^2}+ \|\phi^2(t)-1\|^2\Big\}\\
&\quad+\int_0^{T}\|\nabla\rho\|^2_{H^2} d\tau+ \int_0^{+\infty}\|\nabla\mathbf{u},\nabla\phi\|^2_{H^3} d\tau\\
&\displaystyle\quad\lesssim \|\rho_0-\bar{\rho}\|^2_{H^3}+\|\mathbf{u}_0\|^2_{H^3}+ \|\nabla \phi_0\|^2_{H^2}+\|\phi^2_0-1\|^2.
\end{aligned}
\end{equation}
\end{proposition}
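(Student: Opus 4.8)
The plan is to combine the four differential inequalities established in Lemmas \ref{lem31}, \ref{lem34}, \ref{lem32} and \ref{lem33} into a single Lyapunov-type inequality of the form $\f{d}{dt}\mathcal{E}(t)+c\,\mathcal{D}(t)\leq 0$, where the energy functional $\mathcal{E}(t)$ is equivalent (in the sense $A\backsimeq B$) to $\|(\sigma,\mathbf{u})\|_{H^3}^2+\|\nabla\phi\|_{H^2}^2+\|\phi^2-1\|^2$, and the dissipation $\mathcal{D}(t)$ controls $\|\nabla\sigma\|_{H^2}^2+\|\nabla\mathbf{u}\|_{H^3}^2+\|\nabla\phi\|_{H^3}^2$. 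Integrating this inequality in time then yields \eqref{a priori estimate}. Throughout, the smallness parameter $M$ in the definition \eqref{h310} of $X_{m_1,M_1}$ is chosen small enough that every term carrying a prefactor $M$ on the right-hand side of the lemmas can be absorbed into the dissipation on the left.

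First I would sum the estimate \eqref{hr316} of Lemma \ref{lem32} over $k=0,1,2,3$ and the estimate \eqref{hr329} of Lemma \ref{lem34} over $k=1,2$; together with the basic energy law \eqref{h38} and the coercivity \eqref{hg321} of Lemma \ref{lem31} (which supply the $L^2$ level and the dissipation of $\nabla\mathbf{u}$, $\Delta\phi$, $\nabla\phi$), this controls $\f{d}{dt}$ of $\|(\sigma,\mathbf{u})\|_{H^3}^2+\|\nabla\phi\|_{H^2}^2+\|\phi^2-1\|^2$ and produces the full velocity dissipation $\|\nabla\mathbf{u}\|_{H^3}^2$ and phase dissipation $\|\nabla\phi\|_{H^3}^2$. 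At this stage the right-hand sides still contain the undissipated density terms $\|\nabla^k\sigma\|^2$, so no dissipation for $\sigma$ is yet available; supplying it is the crux of the argument.

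To recover density dissipation I would sum the cross-term estimate \eqref{hr326} of Lemma \ref{lem33} over $k=0,1,2$ and add it, multiplied by a small constant $\eta>0$, to the combination above. Since $\big|\int\nabla^k\mathbf{u}\cdot\nabla^{k+1}\sigma\,d\mathbf{x}\big|\leq \f{1}{2}\big(\|\nabla^k\mathbf{u}\|^2+\|\nabla^{k+1}\sigma\|^2\big)$ and $\|\nabla^{k+1}\sigma\|$ is already part of $\|\sigma\|_{H^3}$, for $\eta$ small the resulting functional $\mathcal{E}(t)$ remains equivalent to the target energy. The term $\eta\sum_{k}\|\nabla^{k+1}\sigma\|^2$ then furnishes the missing dissipation $\|\nabla\sigma\|_{H^2}^2$, while the price paid, namely the terms $\eta\|\nabla^{k+1}\mathbf{u}\|^2$ and $\eta M\big(\|\nabla^{k+2}\mathbf{u}\|^2+\|\nabla^{k+2}\phi\|^2\big)$ on its right-hand side, is dominated by the velocity and phase dissipation already present once $\eta$ and $M$ are taken small enough.

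The main obstacle is precisely this bookkeeping of constants: the density has no intrinsic dissipation in the hyperbolic continuity equation, so one must extract it from the momentum equation through the cross term, and the velocity dissipation it generates sits one order higher ($\|\nabla^{k+2}\mathbf{u}\|$) than what the energy naturally carries. Hence the weights $\eta$ and the smallness of $M$ must be tuned so that these higher-order velocity terms are controlled by $\|\nabla\mathbf{u}\|_{H^3}^2$ without any circular dependence. Once $\f{d}{dt}\mathcal{E}(t)+c\big(\|\nabla\sigma\|_{H^2}^2+\|\nabla\mathbf{u}\|_{H^3}^2+\|\nabla\phi\|_{H^3}^2\big)\leq 0$ is established, integrating from $0$ to $T$ and invoking the equivalence $\mathcal{E}\backsimeq\|(\sigma,\mathbf{u})\|_{H^3}^2+\|\nabla\phi\|_{H^2}^2+\|\phi^2-1\|^2$ gives both the supremum bound and the time-integrated dissipation bound in \eqref{a priori estimate}, which together with Proposition \ref{pro311} permits the continuation of the local solution to a global one.
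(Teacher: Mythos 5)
Your proposal follows essentially the same route as the paper: the authors likewise sum \eqref{hr316} over $k=0,\dots,3$, \eqref{hr329} over $k=1,2$, add the basic energy law together with $\eta$ times the summed cross-term estimate \eqref{hr326}, choose $\eta$ and $M$ small to obtain the Lyapunov inequality \eqref{h54} with an energy equivalent to $\|(\sigma,\mathbf{u})\|_{H^3}^2+\|\nabla\phi\|_{H^2}^2+\|\phi^2-1\|^2$, and then integrate in time. Your identification of the cross term as the source of the density dissipation and of the $\eta$-$M$ bookkeeping as the delicate point matches the paper's argument exactly.
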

\begin{proof}We first close the energy estimates at each $l-$th level  to prove \eqref{h18}.
Let $ 0\leq l\leq 2$. Summing up the estimate \eqref{hr329} of Lemma \ref{lem34} for from $k=l$ to $2$, we obtain
\begin{equation}
	\label{h51}\begin{aligned}
	&\f{d}{dt}\sum_{1\leq l\leq k\leq 2}\|\nabla^{k+1} \phi\|^2+\sum_{1\leq l\leq k\leq 2}\|\nabla^{k+2}\phi\|^2\\&	
	\lesssim M\sum_{1\leq l\leq k\leq 2}\lf(\|\nabla^{k+1}\sigma\|^2+ \|\nabla^{k+1}\phi\|^2+	\|\nabla^{k+2} \mathbf{u}\|^2\rg).
	\end{aligned}
\end{equation}
Also, summing up the estimate \eqref{hr316} of Lemma \ref{lem32} for from $k=l$ to $3$, we obtain
\begin{equation}
	\label{h52}
	\begin{aligned}
		&\f{d}{dt}\sum_{0\leq l\leq k\leq 3}\lf(\|\nabla^k\mathbf{u}\|^2
		+\f{p'(\bar{\rho})}{\bar{\rho}^2}\|\nabla^k\sigma\|^2\rg)+\sum_{0\leq l\leq k\leq 3}\|\nabla^{k+1} \mathbf{u}\|^2\\
		&\lesssim M\sum_{0\leq l\leq k\leq 3}\lf(\|\nabla^{k}
		\sigma\|^2+\|\nabla^{k+1}\phi\|^2\rg).
\end{aligned}\end{equation}
Last, summing up the estimate \eqref{hr326} of Lemma \ref{lem33} for from $k=l$ to $2$, we obtain
\begin{equation}
	\label{h53}
	\begin{aligned}
		&\f{d}{dt}\sum_{0\leq l\leq k\leq 2}\int \nabla^{k} \mathbf{u}\cdot\nabla^{k+1}\sigma d\mathbf{x}+\sum_{0\leq l\leq k\leq 2} \|\nabla^{k+1} \sigma\|^2
		\\
		&\leq  M \sum_{0\leq l\leq k\leq 2}\lf(	\|\nabla^{k+2} \mathbf{u}\|^2+\|\nabla^{k+2}\phi\|^2\rg)+\sum_{0\leq l\leq k\leq 2}\|\nabla^{k+1} \mathbf{u}\|^2.
\end{aligned}\end{equation}
Let $\eta\in (0,1]$ be suitably small. Then, summing \eqref{hr32}, \eqref{h51}, \eqref{h52} and $\eta\cdot \eqref{h53}$, and choosing $M>0$  small enough, we obtain
\begin{equation}
	\label{h54}
	\f{d}{dt}\mathcal{E}_l(t)+\Lambda_{l}(t)\leq 0,
\end{equation}
where
\begin{equation}\label{h55}\begin{aligned}
			\mathcal{E}_l(t)&\overset{def}{=}\sum_{0\leq l\leq k\leq  3}\lf(\|\nabla^k\mathbf{u}\|^2
			+\f{p'(\bar{\rho})}{\bar{\rho}^2}\|\nabla^k\sigma\|^2\rg)+\eta \sum_{0\leq l\leq k\leq 2}\int \nabla^{k} \mathbf{u}\cdot\nabla^{k+1}\sigma d\mathbf{x}
			\\
			&+\sum_{1\leq l\leq k\leq 2}\|\nabla^{k+1} \phi\|^2+\int\Big(\rho
			\mathbf{u}^2+|\rho-\bar\rho|^2+|\nabla
			\phi|^2+(\phi^2-1)^2\Big)d\mathbf{x},\end{aligned}
	\end{equation}
	and
	\begin{equation}\label{hh55}
	\begin{aligned}
\Lambda_{l}(t)&\overset{def}{=}\eta\sum_{1\leq l+1\leq k\leq 3} \|\nabla^{k} \sigma\|^2+\lf(1-M\rg)\sum_{2\leq l+1\leq k\leq 3}\|\nabla^{k+1}\phi\|^2\\
&+\lf(1-\eta\rg)\sum_{0\leq l\leq k\leq 3}\|\nabla^{k+1} \mathbf{u}\|^2+\|\nabla\mathbf{u}\|^2+\|
\mu \|^2\\
&+\|\Delta\phi\|^2-M\|\nabla^2\phi\|^2+\|\nabla\lf(\phi^2-1\rg)\|^2.\end{aligned}
\end{equation}
Notice that since $\eta\in (0, 1]$ and $M>0$ are suitably small, we obtain from \eqref{h55} and \eqref{hh55} that
\begin{equation}\label{h56}\begin{aligned}
		&\mathcal{E}_l^3(t) \backsimeq \|\nabla^l(\sigma, \mathbf{u})(t)\|^2_{H^{2}}+\|\nabla^{l+1} \phi(t)\|^2_{H^{2-l}}+\|\phi^2-1\|^2,\\
		&\begin{aligned}\Lambda_{l}^3(t)\backsimeq& \|\nabla ^{l+1} \sigma(t)\|^2_{H^{2-l}}+ \|\nabla^{l+1}\mathbf{u}(t)\|^2_{H^{3-l}}+ \|\nabla^{l+1} \phi(t)\|^2_{H^{3-l}}+\|\mu \|^2,\end{aligned}
\end{aligned} \end{equation} uniformly for all $t\geq 0$.
Now taking $l=0$ and $n=3$ in \eqref{h54}, and then  using \eqref{h56}, we get
\begin{equation}\label{h57}
	\begin{aligned}
		&\|\sigma(t)\|^2_{H^{3}}+\|\mathbf{u}(t)\|^2_{H^{3}}+ \|\nabla \phi(t)\|^2_{H^{2}}+\|\phi^2(t)-1\|^2\\
&\lesssim \|\rho_0-\bar{\rho}\|^2_{H^{3}}+\|\mathbf{u}_0\|^2_{H^{3}}+ \|\nabla \phi_0\|^2_{H^{2}}+\|\phi^2_0-1\|^2.
	\end{aligned}
\end{equation}
Using \eqref{g17} and \eqref{h57}, by a standard continuity argument, we can close the a priori estimate
\eqref{h310}.  This in turn allows us to take $l=0$ 
 in \eqref{h54}, and then integrate it directly in time to obtain  \eqref{a priori estimate}.
\end{proof}

\vspace{0.2cm} From the a priori estimate  \eqref{estimate} and the local existence of the solution $(\rho,\mathbf{u}, \phi)$  for the Cauchy problem \eqref{h17}-\eqref{hh19}(see Proposition \eqref{pro311}), we can construct a solution for $t\in[0,T^*]$, which satisfies \eqref{h315} in $\mathbb{R}^3\times[0,T^*]$, $T^*$ only depends on the initial data  of the Cauchy problem \eqref{h17}-\eqref{hh19}. From \eqref{h57},  one can start again from $T^*$, by the same way, one can find a solution in $[T^*,2T^*]$, and so on. Thus the existence and uniqueness of the global solution is obtained. Meanwhile, by using maximum principle, $-1\leq\phi\leq1$.

\subsection{Decay rate}\label{subs2}

\hspace{2em} We  first prove \eqref{h124}-\eqref{hg124} for $s\in (0, \f{1}{2}]$. Define $$\mathcal{E}_{-s}:=\f{p'(\bar{\rho})}{\bar{\rho}^2}\|\Lambda^{-s} \sigma\|^2+\|\Lambda^{-s} \mathbf{u}\|^2+\|\Lambda^{-s} \nabla\phi\|^2+\|\Lambda^{-s} (\phi^2-1)\|^2. $$
Then, integrating in time \eqref{hr41} and using  \eqref{h18}, we obtain that
$$\begin{aligned}
\mathcal{E}_{-s}(t)&\lesssim \mathcal{E}_{-s}(0)+ \int_0^t \lf(\|\nabla\sigma\|_{H^2}^2+
\|\nabla(\mathbf{u}, \nabla \phi)\|_{H^1}^2+\|\nabla(\phi^2-1)\|^2\rg)\sqrt{\mathcal{E}_{-s}(\tau)}d\tau\\
&\lesssim 1+ \sup_{0\leq \tau\leq t}\sqrt{\mathcal{E}_{-s}(\tau)}, \ \mathrm{for}\  s\in (0, \f{1}{2}],
\end{aligned}$$ which implies  \eqref{h124}.
For the proof of  \eqref{hg124}, we need the following lemma.

\begin{lemma}\label{lem51}  Under the  the  assumption \eqref{h310}, it holds   that
	\begin{equation}
		\label{hh58}\begin{aligned}
		&\f{1}{2}\f{d}{dt}\|\nabla^{k}\lf(\phi^2-1\rg)\|^2+\f{\epsilon}{8\bar{\rho}^2}\|\nabla^{k+1}\lf(\phi^2-1\rg)\|^2+\f{1}{2\bar{\rho}\epsilon}\|\nabla^{k}\lf(\phi^2-1\rg)\|^2
\\&	\lesssim M\lf(\|\nabla^{k+1}
		\sigma\|^2+
		\|\nabla^{k+2} \phi\|^2+
		\|\nabla^{k+1} \mathbf{u}\|^2\rg),\ \mathrm{for}\  k=1, 2.		\end{aligned}
	\end{equation}
\end{lemma}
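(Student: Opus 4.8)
The plan is to test the $k$-th derivative of the reformulated Allen--Cahn equation \eqref{hr330} against $\nabla^k(\phi^2-1)$ and read off the two coercive quantities appearing on the left of \eqref{hh58}. Writing $\psi:=\phi^2-1$ and splitting all coefficients around the constant state by means of $h_3(\sigma)=\f{\epsilon}{\bar{\rho}^2}-\f{\epsilon}{\rho^2}$ and $\phi^2=1+\psi$, I would first recast \eqref{hr330} as
\begin{equation*}
\begin{aligned}
\psi_t-\f{\epsilon}{\bar{\rho}^2}\Delta\psi+\f{2}{\epsilon\bar{\rho}}\psi
={}&-\mathbf{u}\cdot\nabla\psi-h_3(\sigma)\Delta\psi-\f{2\epsilon}{\rho^2}|\nabla\phi|^2\\
&-2\lf(\f{1}{\epsilon\rho}-\f{1}{\epsilon\bar{\rho}}\rg)\psi-\f{2}{\epsilon\rho}\psi^2 .
\end{aligned}
\end{equation*}
Applying $\nabla^k$, multiplying by $\nabla^k\psi$ and integrating, the time derivative gives $\f12\f{d}{dt}\|\nabla^k\psi\|^2$, the Laplacian gives (after one integration by parts) $\f{\epsilon}{\bar{\rho}^2}\|\nabla^{k+1}\psi\|^2$, and — crucially — the linear reaction term $\f{2}{\epsilon\bar{\rho}}\psi$ furnishes the coercive zero-order term $\f{2}{\epsilon\bar{\rho}}\|\nabla^k\psi\|^2$. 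These are exactly the two good quantities of \eqref{hh58}, with full constants before any absorption.

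It then remains to bound the five nonlinear contributions on the right of the recast equation. Each is treated by the Leibniz rule, H\"older's inequality, the Gagliardo--Nirenberg inequality \eqref{h20} and the composition estimate \eqref{hh20}, together with the a priori bounds $\|\phi\|_{L^\infty}\lesssim1$ from \eqref{hr33} and $\|(\sigma,\mathbf{u})\|_{H^3}+\|\nabla\phi\|_{H^2}\lesssim M$ from \eqref{h310} — precisely the mechanism already used in Lemmas \ref{lem34}, \ref{lem32} and \ref{lem33}. The genuinely top-order output $\|\nabla^{k+2}\phi\|^2$ comes from the diffusion error $h_3(\sigma)\Delta\psi$: its leading Leibniz term $\int h_3(\sigma)\nabla^{k+2}\psi\,\nabla^k\psi$ is controlled, using $\|h_3(\sigma)\|_{L^\infty}\lesssim M$ and $\|\nabla^{k+2}\psi\|\lesssim\|\nabla^{k+2}\phi\|$ (from \eqref{hh20}), by $M\|\nabla^{k+2}\phi\|^2+M\|\nabla^k\psi\|^2$. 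The remaining $\sigma$-factors (from $h_3$ and from the zero-order error) and velocity factors (from the convection term) are handled by Gagliardo--Nirenberg interpolations whose high-order anchors are $\|\nabla^{k+1}\sigma\|$ and $\|\nabla^{k+1}\mathbf{u}\|$, producing exactly the $\|\nabla^{k+1}\sigma\|^2$ and $\|\nabla^{k+1}\mathbf{u}\|^2$ on the right, while all low-order factors carry a gain of $M$.

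The main obstacle is that, because of the far-field condition \eqref{hh19}, $\phi$ is neither small in $L^\infty$ nor integrable, so $\phi$ itself can never be used as a small factor; only $\nabla\phi$ and $\psi=\phi^2-1$ are small or square-integrable. The sharpest instance is the quadratic term $\f{2\epsilon}{\rho^2}|\nabla\phi|^2$, whose $k$-th derivative has top order $\nabla^{k+1}\phi$, a norm that is neither among the coercive left-hand terms nor among the right-hand terms of \eqref{hh58}. I would resolve this with the algebraic identity $\nabla^{k+1}\psi=2\phi\,\nabla^{k+1}\phi+\sum_{1\le j\le k}\binom{k+1}{j}\nabla^{j}\phi\,\nabla^{k+1-j}\phi$: since $|2\phi|\ge 2/\sqrt3$ and the remainder products are bounded by $M\|\nabla^{k+1}\phi\|$ via \eqref{h20} (each such product carries a factor $\|\nabla\phi\|_{L^\infty}\lesssim M$), one obtains $\|\nabla^{k+1}\phi\|\lesssim\|\nabla^{k+1}\psi\|$ for $M$ small. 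Consequently every $M\|\nabla^{k+1}\phi\|^2$ and every $M\|\nabla^k\psi\|^2$ produced above is swallowed by the coercive left-hand terms; choosing $M$ small enough to reduce $\f{\epsilon}{\bar{\rho}^2}$ to $\f{\epsilon}{8\bar{\rho}^2}$ and $\f{2}{\epsilon\bar{\rho}}$ to $\f{1}{2\bar{\rho}\epsilon}$ then yields \eqref{hh58}. I expect the delicate part to be this systematic bookkeeping of intermediate absorptions, rather than any single inequality, since the interplay between $\phi$, $\nabla\phi$ and $\psi$ must be tracked carefully throughout.
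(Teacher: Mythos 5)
Your proposal is correct and follows essentially the same route as the paper's proof: an $L^2$ energy estimate for $\nabla^k(\phi^2-1)$ obtained by testing the $\nabla^k$-differentiated equation \eqref{hr330} against $\nabla^k(\phi^2-1)$, with the commutator and nonlinear terms handled by Leibniz, H\"older, Gagliardo--Nirenberg \eqref{h20} and the composition estimate \eqref{hh20}, and all $M$-small contributions absorbed into the two coercive quantities (the paper keeps the variable coefficients $\epsilon/\rho^2$ and $2/(\epsilon\rho)$ on the left and bounds them below via \eqref{h315}, whereas you freeze them at $\bar\rho$ and move the differences to the right --- an immaterial variation). Your auxiliary estimate $\|\nabla^{k+1}\phi\|\lesssim\|\nabla^{k+1}(\phi^2-1)\|$, deduced from $2\phi\,\nabla^{k+1}\phi=\nabla^{k+1}(\phi^2-1)-(\text{lower-order products})$ together with $\phi^2>1/3$ and the smallness of $\|\nabla\phi\|_{H^2}$, is a welcome refinement: it cleanly disposes of the $\|\nabla^{k+1}\phi\|^2$ contribution coming from the $|\nabla\phi|^2$ term, a contribution which the paper's estimate of $I_{20}$ records in its intermediate step but silently drops from the final bound.
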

\begin{proof} Applying $\nabla^k$ to \eqref{hr330}  and  multiplying it by $\nabla^k (\phi^2-1)$, we have
\begin{equation}\label{hh59}\begin{aligned}
		\f{1}{2}&\f{d}{dt}\|\nabla^{k}\lf(\phi^2-1\rg)\|^2+\int\f{\epsilon}{\rho^2}|\nabla^{k+1}\lf(\phi^2-1\rg)|^2d\mathbf{x}+\int\f{2}{\rho \epsilon}|\nabla^{k}\lf(\phi^2-1\rg)|^2d\mathbf{x}\\	&=\epsilon\underline{\sum_{1\leq l\leq k}
			C_{k}^{l}\int \nabla^{l}\lf(\f{1}{\rho^2}\rg) \nabla^{k-l}\Delta\lf(\phi^2-1\rg) \nabla^{k}\lf(\phi^2-1\rg) d\mathbf{x}}_{I_{16}}\\
		&-\epsilon\underline{\int \nabla\lf(\f{1}{\rho^2}\rg) \nabla^{k+1}\lf(\phi^2-1\rg) \nabla^{k}\lf(\phi^2-1\rg) d\mathbf{x}}_{I_{17}}
			\end{aligned}
\end{equation}

\begin{equation*}\label{hh59}\begin{aligned}
&-\f{2}{\epsilon}\underline{\sum_{1\leq l\leq k}
			C_{k}^{l}\int \nabla^{l}\lf(\f{1}{\rho}\rg) \nabla^{k-l} \lf(\phi^2-1\rg) \nabla^{k}\lf(\phi^2-1\rg) d\mathbf{x}}_{I_{18}}\\
		& -\underline{\int \nabla^k \lf[\mathbf{u}\cdot\nabla\lf(\phi^2-1\rg)\rg]  \nabla^k\lf(\phi^2-1\rg) d\mathbf{x}}_{I_{19}}-2\epsilon\underline{\int \nabla^k \lf(\f{|\nabla\phi|^2}{\rho^2}\rg)  \nabla^k\lf(\phi^2-1\rg) d\mathbf{x}}_{I_{20}}\\
			&-\f{2}{\epsilon}\underline{\sum_{0\leq l\leq k}
			C_{k}^{l}\int \nabla^{l}\lf(\f{\phi^2-1}{\rho}\rg) \nabla^{k-l} \lf(\phi^2-1\rg) \nabla^{k}\lf(\phi^2-1\rg) d\mathbf{x}}_{I_{21}}.
\end{aligned}
\end{equation*}
We estimate  $I_{i} (i=16, \cdots, 21)$. For  $I_{16}$, we have
$$\begin{aligned}
I_{16}&\lesssim \sum_{1\leq l\leq k} \|\nabla^{l}\lf(\f{1}{\rho^2}\rg) \nabla^{k-l}\Delta\lf(\phi^2-1\rg)\|_{L^{\f{6}{5}}}\|\nabla^{k}\lf(\phi^2-1\rg)\|_{L^6}\\&\overset{\eqref{h20}}\lesssim \sum_{1\leq l\leq k} \|\nabla^{l}\lf(\f{1}{\rho^2}\rg) \nabla^{k-l}\Delta\lf(\phi^2-1\rg)\|_{L^{\f{6}{5}}}\|\nabla^{k+1}\lf(\phi^2-1\rg)\|.
\end{aligned}
$$
If $l\leq \lf[\f{k}{2}\rg]$,   we get
$$
\begin{aligned}
&\|\nabla^{l}\lf(\f{1}{\rho^2}\rg) \nabla^{k-l}\Delta\lf(\phi^2-1\rg)\|_{L^{\f{6}{5}}}\overset{\eqref{hh20}}\lesssim \|\nabla^{l}\sigma\|_{L^3}\|\nabla^{k-l+2}\lf(\phi^2-1\rg)\| \\
&\overset{\eqref{h20}}\lesssim \|\nabla^\alpha\sigma\|^{1-\f{l-1}{k+1}}\|\nabla^{k+1}\sigma\|^{\f{l-1}{k+1}}\|\phi^2-1\|^{\f{l-1}{k+1}}\|\nabla^{k+1}\lf(\phi^2-1\rg)\|^{1-\f{l-1}{k+1}}
\\
&\overset{\eqref{h310}}\lesssim  M\lf(\|\nabla^{k+1}\sigma\|+\|\nabla^{k+1}\lf(\phi^2-1\rg)\|\rg),
\end{aligned}$$
where $\alpha$ is defined by
$$\begin{aligned}
\frac{l-1}{3}&=\lf(\frac{\alpha}{3}-\frac{1}{2}\rg)\lf(1-\f{l-1}{k+1}\rg)+\lf(\frac{k+1}{3}-\frac{1}{2}\rg)\f{l-1}{k+1}\\& \rightarrow \alpha=\f{3}{2}\lf(1+\f{l-1}{k+2-l}\rg)\in \lf[\f{3}{2}, 3\rg].
\end{aligned}
$$
If $\lf[\f{k}{2}\rg]+1\leq l\leq k$, then  we get
$$
\begin{aligned}
&\|\nabla^{l}\lf(\f{1}{\rho^2}\rg) \nabla^{k-l}\Delta\lf(\phi^2-1\rg)\|_{L^{\f{6}{5}}}\overset{\eqref{hh20}}\lesssim \|\nabla^{l}\sigma\|\|\nabla^{k-l+2}\lf(\phi^2-1\rg)\|_{L^3} \\
&\overset{\eqref{h20}}\lesssim \|\sigma\|^{1-\f{l}{k+1}}\|\nabla^{k+1}\sigma\|^{\f{l}{k+1}}\|\nabla^\alpha\lf(\phi^2-1\rg)\|^{\f{l}{k+1}}\|\nabla^{k+1}\lf(\phi^2-1\rg)\|^{1-\f{l}{k+1}}
\\
&\overset{\eqref{h310}}\lesssim  M\lf(\|\nabla^{k+1}\sigma\|+\|\nabla^{k+1}\lf(\phi^2-1\rg)\|\rg),
\end{aligned}$$
where $\alpha$ is defined by
\begin{eqnarray*}
&&\displaystyle\frac{k+1-l}{3}=\lf(\frac{\alpha}{3}-\frac{1}{2}\rg)\f{l}{k+1}+\lf(\frac{k+1}{3}-\frac{1}{2}\rg)\lf(1-\f{l}{k+1}\rg) \\
&&\displaystyle\Longrightarrow \alpha=\f{3(k+1)}{2l}\in \lf[\f{3}{2},3\rg].
\end{eqnarray*}
Therefore, we obtain
$$
|I_{16}|\lesssim  M\lf(\|\nabla^{k+1}\sigma\|^2+\|\nabla^{k+1}\lf(\phi^2-1\rg)\|^2\rg).$$
It is easy to check that
$$\begin{aligned}
I_{17}&\lesssim  \|\nabla\lf(\f{1}{\rho^2}\rg)\|_{L^3}\| \nabla^{k+1}\lf(\phi^2-1\rg)\|\|\nabla^{k}\lf(\phi^2-1\rg)\|_{L^6}\\&\overset{\eqref{h20}}\lesssim \|\nabla\sigma\|_{L^3}\|\nabla^{k+1}\lf(\phi^2-1\rg)\|^2\lesssim M\|\nabla^{k+1}\lf(\phi^2-1\rg)\|^2.
\end{aligned}
$$
By the similar arguments,  we have
$$
I_{18}\lesssim \sum_{1\leq l\leq k} \|\nabla^{l}\lf(\f{1}{\rho}\rg) \nabla^{k-l}\lf(\phi^2-1\rg)\|\|\nabla^{k}\lf(\phi^2-1\rg)\|.
$$
If $l\leq \lf[\f{k}{2}\rg]$,   we get
$$
\begin{aligned}
&\|\nabla^{l}\lf(\f{1}{\rho}\rg) \nabla^{k-l}\lf(\phi^2-1\rg)\|\overset{\eqref{hh20}}\lesssim \|\nabla^{l}\sigma\|_{L^3}\|\nabla^{k-l}\lf(\phi^2-1\rg)\|_{L^6} \\
&\overset{\eqref{h20}}\lesssim \|\nabla^\alpha\sigma\|^{1-\f{l}{k}}\|\nabla^{k+1}\sigma\|^{\f{l}{k}}\|\nabla\lf(\phi^2-1\rg)\|^{\f{l}{k}}\|\nabla^{k+1}\lf(\phi^2-1\rg)\|^{1-\f{l}{k}}
\\
&\overset{\eqref{h310}}\lesssim  M\lf(\|\nabla^{k+1}\sigma\|+\|\nabla^{k+1}\lf(\phi^2-1\rg)\|\rg),
\end{aligned}$$
where $\alpha$ is defined by
\begin{eqnarray}
\label{ggg-1}
&&\displaystyle\frac{l-1}{3}=\lf(\frac{\alpha}{3}-\frac{1}{2}\rg)\lf(1-\f{l}{k}\rg)+\lf(\frac{k+1}{3}-\frac{1}{2}\rg)\f{l}{k}\notag\\
&&\displaystyle\Longrightarrow \alpha=\f{1}{2}-\f{l}{2(k-l)}\in \lf[0,\f{1}{2}\rg].
\end{eqnarray}
If $\lf[\f{k}{2}\rg]+1\leq l\leq k$, then  we get
$$
\begin{aligned}
&\|\nabla^{l}\lf(\f{1}{\rho}\rg) \nabla^{k-l}\lf(\phi^2-1\rg)\|\overset{\eqref{hh20}}\lesssim \|\nabla^{l}\sigma\|_{L^{6}}\|\nabla^{k-l}\lf(\phi^2-1\rg)\|_{L^3} \\
&\overset{\eqref{h20}}\lesssim \|\nabla\sigma\|^{1-\f{l}{k}}\|\nabla^{k+1}\sigma\|^{\f{l}{k}}\|\nabla^\alpha\lf(\phi^2-1\rg)\|^{\f{l}{k}}\|\nabla^{k+1}\lf(\phi^2-1\rg)\|^{1-\f{l}{k}}
\\
&\overset{\eqref{h310}}\lesssim  M\lf(\|\nabla^{k+1}\sigma\|+\|\nabla^{k+1}\lf(\phi^2-1\rg)\|\rg),
\end{aligned}$$
where $\alpha$ is defined by
\begin{equation}
\label{ggg-2}
\frac{k-1-l}{3}=\lf(\frac{\alpha}{3}-\frac{1}{2}\rg)\f{l}{k}+\lf(\frac{k+1}{3}-\frac{1}{2}\rg)\lf(1-\f{l}{k}\rg)\Longrightarrow \alpha=1-\f{k}{2l}\in \lf[0, 1\rg].\end{equation}
Therefore, we obtain
$$
|I_{18}|\lesssim  M\lf(\|\nabla^{k+1}\sigma\|^2+\|\nabla^{k}\lf(\phi^2-1\rg)\|^2+\|\nabla^{k+1}\lf(\phi^2-1\rg)\|^2\rg).$$
Also, for  $I_{19}$, we have
$$\begin{aligned}
I_{19}&=\int \mathbf{u}\cdot \nabla^{k+1}\lf(\phi^2-1\rg)\nabla^k\lf(\phi^2-1\rg) d\mathbf{x}\\
&+\sum_{1\leq l\leq k}
C_{k}^{l}\int \nabla^l\mathbf{u}\cdot \nabla^{k+1-l}\lf(\phi^2-1\rg)\nabla^k\lf(\phi^2-1\rg) d\mathbf{x}\\& \lesssim  M\lf(\|\nabla^{k}\lf(\phi^2-1\rg)\|^2+\|\nabla^{k+1}\lf(\phi^2-1\rg)\|^2\rg)\\&
+\sum_{1\leq l\leq k} \|\nabla^{l}\mathbf{u}\cdot \nabla^{k-l+1}\lf(\phi^2-1\rg)\|\|\nabla^{k}\lf(\phi^2-1\rg)\|.
\end{aligned}
$$
If $l\leq \lf[\f{k+1}{2}\rg]$,   we get
$$
\begin{aligned}
&\|\nabla^{l}\mathbf{u}\cdot \nabla^{k+1-l}\lf(\phi^2-1\rg)\|\lesssim \|\nabla^{l}\mathbf{u}\|_{L^3}\|\nabla^{k+1-l}\lf(\phi^2-1\rg)\|_{L^6} \\
&\overset{\eqref{h20}}\lesssim \|\nabla^\alpha\mathbf{u}\|^{1-\f{l-1}{k}}\|\nabla^{k+1}\mathbf{u}\|^{\f{l-1}{k}}\|\nabla\lf(\phi^2-1\rg)\|^{\f{l-1}{k}}\|\nabla^{k+1}\lf(\phi^2-1\rg)\|^{1-\f{l-1}{k}}
\\
&\overset{\eqref{h310}}\lesssim  M\lf(\|\nabla^{k+1}\mathbf{u}\|+\|\nabla^{k+1}\lf(\phi^2-1\rg)\|\rg),
\end{aligned}$$
where $\alpha$ is defined by
\begin{eqnarray*}
&&\displaystyle\frac{l-1}{3}=\lf(\frac{\alpha}{3}-\frac{1}{2}\rg)\lf(1-\f{l-1}{k}\rg)+\lf(\frac{k+1}{3}-\frac{1}{2}\rg)\f{l-1}{k} \\
&&\displaystyle\Longrightarrow \alpha=\f{3}{2}+\f{l-1}{2(k+1-l)}\in \lf[\f{3}{2}, \f{5}{2}\rg].
  \end{eqnarray*}
If $\lf[\f{k+1}{2}\rg]+1\leq l\leq k$, then  we get
$$
\begin{aligned}
&\|\nabla^{l}\mathbf{u} \nabla^{k+1-l}\lf(\phi^2-1\rg)\|\lesssim \|\nabla^{l}\mathbf{u}\|_{L^{6}}\|\nabla^{k+1-l}\lf(\phi^2-1\rg)\|_{L^3} \\
&\overset{\eqref{h20}}\lesssim \|\nabla\mathbf{u}\|^{1-\f{l}{k}}\|\nabla^{k+1}\mathbf{u}\|^{\f{l}{k}}\|\nabla^\alpha\lf(\phi^2-1\rg)\|^{\f{l}{k}}\|\nabla^{k+1}\lf(\phi^2-1\rg)\|^{1-\f{l}{k}}
\\
&\overset{\eqref{h310}}\lesssim  M\lf(\|\nabla^{k+1}\mathbf{u}\|+\|\nabla^{k+1}\lf(\phi^2-1\rg)\|\rg),
\end{aligned}$$
where $\alpha$ is defined by
\begin{eqnarray*}
 &&\displaystyle \frac{k-l}{3}=\lf(\frac{\alpha}{3}-\frac{1}{2}\rg)\f{l}{k}+\lf(\frac{k+1}{3}-\frac{1}{2}\rg)\lf(1-\f{l}{k}\rg) \\
 &&\displaystyle\Longrightarrow \alpha=1+\f{k}{2l}\in \lf[1, 2\rg].
\end{eqnarray*}
Therefore, we obtain
$$
|I_{19}|\lesssim  M\lf(\|\nabla^{k+1}\mathbf{u}\|^2+\|\nabla^{k}\lf(\phi^2-1\rg)\|^2+\|\nabla^{k+1}\lf(\phi^2-1\rg)\|^2\rg).$$
Similarity,  we have
$$
\begin{aligned}
I_{20}&=\int \f{\nabla\phi}{\rho^2}\cdot \nabla^{k+2}\phi\nabla^k\lf(\phi^2-1\rg) d\mathbf{x}\\
&+\sum_{1\leq l\leq k}
C_{k}^{l}\int \nabla^l\lf(\f{\nabla\phi}{\rho^2}\rg)\cdot \nabla^{k+1-l}\phi\nabla^k\lf(\phi^2-1\rg) d\mathbf{x}\\& \lesssim  M\lf(\|\nabla^{k}\lf(\phi^2-1\rg)\|^2+\|\nabla^{k+1}\phi\|^2\rg)\\&
+\sum_{1\leq l\leq k} \|\nabla^{l}\lf(\f{\nabla\phi}{\rho^2}\rg)\cdot \nabla^{k-l+1}\phi\|\|\nabla^{k}\lf(\phi^2-1\rg)\|.
\end{aligned}
$$
If $l\leq \lf[\f{k+1}{2}\rg]$,   we get
$$
\begin{aligned}
&\|\nabla^{l}\lf(\f{\nabla\phi}{\rho^2}\rg)\cdot \nabla^{k+1-l}\phi\|\overset{\eqref{hh20}}\lesssim\lf(\|\nabla^{l+1}\phi\|_{L^3}+\|\nabla^{l}\sigma\|_{L^3}\rg) \|\nabla^{k+1-l}\phi\|_{L^6} \\
&\overset{\eqref{h20}}\lesssim \lf( \|\nabla^{\alpha+1}\phi\|^{1-\f{l}{k+1}}\|\nabla^{k+2}\phi\|^{\f{l}{k+1}}+\|\nabla^{\alpha}\sigma\|^{1-\f{l}{k+1}}\|\nabla^{k+1}\sigma\|^{\f{l}{k+1}}\rg)\|\nabla\phi\|^{\f{l}{k+1}}\|\nabla^{k+2}\phi\|^{1-\f{l}{k+1}}
\\
&\overset{\eqref{h310}}\lesssim  M\lf(\|\nabla^{k+1}\sigma\|+\|\nabla^{k+2}\phi\|\rg),
\end{aligned}$$
where $\alpha$ is defined by
\begin{eqnarray*}
&&\displaystyle  \frac{l-1}{3}=\lf(\frac{\alpha}{3}-\frac{1}{2}\rg)\lf(1-\f{l}{k+1}\rg)+\lf(\frac{k+1}{3}-\frac{1}{2}\rg)\f{l}{k+1} \\
&&\displaystyle \Longrightarrow \alpha=\f{k+1}{2(k+1-l)}\in \lf[0, 1\rg].
\end{eqnarray*}
If $\lf[\f{k+1}{2}\rg]+1\leq l\leq k$, then  we get
$$
\begin{aligned}
&\|\nabla^{l}\lf(\f{\nabla\phi}{\rho^2}\rg)\cdot \nabla^{k+1-l}\phi\|\overset{\eqref{hh20}}\lesssim\lf(\|\nabla^{l+1}\phi\|_{L^6}+\|\nabla^{l}\sigma\|_{L^6}\rg) \|\nabla^{k+1-l}\phi\|_{L^3} \\
&\overset{\eqref{h20}}\lesssim\lf( \|\nabla^2\phi\|^{1-\f{l}{k}}\|\nabla^{k+2}\phi\|^{\f{l}{k}}+ \|\nabla\sigma\|^{1-\f{l}{k}}\|\nabla^{k+1}\sigma|^{\f{l}{k}}\rg)\|\nabla^\alpha\phi\|^{\f{l}{k}}\|\nabla^{k+2}\phi\|^{1-\f{l}{k}}
\\
&\overset{\eqref{h310}}\lesssim  M\lf(\|\nabla^{k+1}\sigma\|+\|\nabla^{k+2}\phi\|\rg),
\end{aligned}$$
where $\alpha$ is defined by
$$
\frac{k-l}{3}=\lf(\frac{\alpha}{3}-\frac{1}{2}\rg)\f{l}{k}+\lf(\frac{k+2}{3}-\frac{1}{2}\rg)\lf(1-\f{l}{k}\rg)\rightarrow \alpha=2-\f{k}{2l}\in \lf[1, 2\rg].$$
Therefore, we obtain
$$
|I_{20}|\lesssim  M\lf(\|\nabla^{k+1}\sigma\|^2+\|\nabla^{k}\lf(\phi^2-1\rg)\|^2+\|\nabla^{k+2}\phi\|^2\rg).$$
Last,  we have
$$
\begin{aligned}
I_{21} \lesssim  M\|\nabla^{k}\lf(\phi^2-1\rg)\|^2
+\sum_{1\leq l\leq k} \|\nabla^{l}\lf(\f{\phi^2-1}{\rho}\rg) \nabla^{k-l} \lf(\phi^2-1\rg)\|\|\nabla^{k}\lf(\phi^2-1\rg)\|.
\end{aligned}
$$
If $l\leq \lf[\f{k}{2}\rg]$,   we get
$$
\begin{aligned}
&\|\nabla^{l}\lf(\f{\phi^2-1}{\rho}\rg) \nabla^{k-l}\lf(\phi^2-1\rg)\|\\
&\overset{\eqref{hh20}}\lesssim\lf(\|\nabla^{l}\lf(\phi^2-1\rg)\|_{L^3}+\|\nabla^{l}\sigma\|_{L^3}\rg) \|\nabla^{k-l}\lf(\phi^2-1\rg)\|_{L^6} \\
&\overset{\eqref{h20}}\lesssim\lf( \|\nabla^\alpha\lf(\phi^2-1\rg)\|^{1-\f{l}{k}}\|\nabla^{k+1}\lf(\phi^2-1\rg)\|^{\f{l}{k}}+ \|\nabla^\alpha\sigma\|^{1-\f{l}{k}}\|\nabla^{k+1}\sigma\|^{\f{l}{k}}\rg)\times\\
&\qquad\times\|\nabla\lf(\phi^2-1\rg)\|^{\f{l}{k}}\|\nabla^{k+1}\lf(\phi^2-1\rg)\|^{1-\f{l}{k}}
\\
&\overset{\eqref{h310}}\lesssim  M\lf(\|\nabla^{k+1}\sigma\|+\|\nabla^{k+1}\lf(\phi^2-1\rg)\|\rg),
\end{aligned}$$
where $\alpha$ is defined by \eqref{ggg-1}.
If $\lf[\f{k}{2}\rg]+1\leq l\leq k$, then  we get
$$
\begin{aligned}
&\displaystyle\|\nabla^{l}\lf(\f{\phi^2-1}{\rho}\rg) \nabla^{k-l}\lf(\phi^2-1\rg)\|\\
&\displaystyle\overset{\eqref{hh20}}\lesssim\lf(\|\nabla^{l}\lf(\phi^2-1\rg)\|_{L^{6}}+\|\nabla^{l}\sigma\|_{L^{6}}\rg) \|\nabla^{k-l}\lf(\phi^2-1\rg)\|_{L^3} \\
&\displaystyle\overset{\eqref{h20}}\lesssim\lf( \|\nabla\lf(\phi^2-1\rg)\|^{1-\f{l}{k}}\|\nabla^{k+1}\lf(\phi^2-1\rg)\|^{\f{l}{k}}+ \|\nabla\sigma\|^{1-\f{l}{k}}\|\nabla^{k+1}\sigma\|^{\f{l}{k}}\rg)\times\\
&\displaystyle\times\|\nabla^\alpha\lf(\phi^2-1\rg)\|^{\f{l}{k}}\|\nabla^{k+1}\lf(\phi^2-1\rg)\|^{1-\f{l}{k}}
\\
&\overset{\eqref{h310}}\lesssim  M\lf(\|\nabla^{k+1}\sigma\|+\|\nabla^{k+1}\lf(\phi^2-1\rg)\|\rg),
\end{aligned}$$
where $\alpha$ is defined by \eqref{ggg-2}.
Therefore, we obtain
$$
|I_{21}|\lesssim  M\lf(\|\nabla^{k+1}\sigma\|^2+\|\nabla^{k}\lf(\phi^2-1\rg)\|^2+\|\nabla^{k+2}\phi\|^2\rg).$$
Substituting the estimates on $I_{i} (i=16, \cdots, 21)$  into \eqref{hh59} and  using \eqref{h315},  we obtain \eqref{hh58}.  The proof of Lemma \ref{lem51} is completed.\end{proof}

\vspace{0.2cm}We will continue the  proof of \eqref{hg124} for $s\in (0, \f{1}{2}]$. Summing up the estimate \eqref{hh58} of Lemma \ref{lem51} for from $k=l$ to $2$, we obtain
\begin{equation}
\label{hgh513}\begin{aligned}
&\displaystyle\f{1}{2}\f{d}{dt}\|\nabla^{l} \lf(\phi^2-1\rg)\|_{H^{2-l}}^2+\f{\epsilon}{8\bar{\rho}^2}\|\nabla^{l+1} \lf(\phi^2-1\rg)\|_{H^{2-l}}^2\\
&	\displaystyle\lesssim M\lf(\|\nabla ^{l+1} \sigma\|^2_{H^{2-l}}+ \|\nabla^{l+1}\mathbf{u}\|^2_{H^{3-l}}+ \|\nabla^{l+2} \phi\|^2_{H^{2-l}}\rg)
\end{aligned}
\end{equation}for $l=1,2$.
Therefore, after  summing   \eqref{h54} and  \eqref{hgh513}, we have
\begin{equation}
\label{hg54}
\f{d}{dt}\mathcal{F}_l(t)+G_{l}(t)\leq 0,
\end{equation} where
\begin{equation}\label{hg55}\begin{aligned}&\displaystyle
\mathcal{F}_l(t):=\mathcal{E}_l(t)+\f{1}{2}\|\nabla^{l} \lf(\phi^2-1\rg)\|_{H^{2-l}}^2
\\
&\displaystyle\begin{aligned}
G_{l}(t):&=\Lambda_{l}(t)+\f{\epsilon}{8\bar{\rho}^2}\|\nabla^{l+1} \lf(\phi^2-1\rg)\|_{H^{2-l}}^2\\
&\displaystyle-C_0M\lf(\|\nabla ^{l+1} \sigma\|^2_{H^{2-l}}+ \|\nabla^{l+1}\mathbf{u}\|^2_{H^{3-l}}+ \|\nabla^{l+2} \phi\|^2_{H^{2-l}}\rg).\end{aligned}\end{aligned}
\end{equation}
Noticing that  $M>0$ is  small, and using \eqref{h55} and \eqref{hh55}, we obtain from \eqref{hg55} that
\begin{equation}\label{hg56}\begin{aligned}
&\displaystyle\mathcal{F}_l(t) \backsimeq \|\nabla^l(\sigma, \mathbf{u})(t)\|^2_{H^{3-l}}+\|\nabla^{l} (\nabla\phi, \phi^2-1)(t)\|^2_{H^{2-l}},\\
&\displaystyle  G_{l}(t)\backsimeq \|\nabla ^{l+1}(\sigma, \nabla\phi, \phi^2-1) (t)\|^2_{H^{2-l}}+ \|\nabla^{l+1}\mathbf{u}(t)\|^2_{H^{3-l}}
\end{aligned} \end{equation} uniformly for all $t\geq 0$.
If $l=0,1,2$, we may use \eqref{hg24} to have
\begin{equation}\label{h512}
\|\nabla^{l+1}f\|\gtrsim\|\Lambda^{-s}f\|^{-\f{1}{l+s}}\|\nabla^{l}f\|^{1+\f{1}{l+s}}.\end{equation}
By \eqref{h512} and \eqref{h124}, we get
$$\|\nabla^{l+1}(\sigma, \mathbf{u}, \nabla\phi, \phi^2-1)\|\gtrsim \|\nabla^{l}(\sigma, \mathbf{u}, \nabla\phi, \phi^2-1)\|^{1+\f{1}{l+s}}, $$
which implies
\begin{equation}\label{h513}
	\begin{aligned}
		&\|\nabla ^{l+1} (\sigma,  \nabla\phi, \phi^2-1)(t)\|^2_{H^{N-l-1}}+ \|\nabla^{l+1}\mathbf{u}(t)\|^2_{H^{3-l}}\\&
		\gtrsim \lf(\|\nabla ^{l} (\sigma, \nabla\phi, \phi^2-1)(t)\|^2_{H^{N-l-1}}+ \|\nabla^{l}\mathbf{u}(t)\|^2_{H^{3-l}}\rg)^{1+\f{1}{l+s}}.
	\end{aligned}
\end{equation} Also, using $\|\sigma(t)\|_{H^3}\lesssim 1$ due to \eqref{h18}, we have
\begin{equation}\label{hh513}
\|\nabla^3\sigma\|\gtrsim\|\nabla^3\sigma\|^{1+\f{1}{l+s}}.\end{equation}
By \eqref{h513} and \eqref{hh513}, we  obtain from \eqref{h56} that
\begin{equation}\label{hg513}
G_{l}(t)\gtrsim\lf(\mathcal{F}_l(t)\rg)^{1+\f{1}{l+s}}.\end{equation}
Using  \eqref{hg513},  we deduce from \eqref{hg54} with  $m=3$ that
$$\f{d}{dt}\mathcal{F}_l(t)+C_0\lf(\mathcal{F}_l(t)\rg)^{1+\f{1}{l+s}}\leq 0. $$
Solving this inequality directly gives  (see \eqref{h119})
$$\mathcal{F}_l(t)\lesssim (1+t)^{-(l+s)}, $$
which implies  \eqref{hg124} for $s\in \lf[0, \f{1}{2}\rg]$,  that is,
$$
\|\nabla^l(\sigma, \mathbf{u})(t)\|^2_{H^{3-l}}+\|\nabla^{l} (\nabla\phi, \phi^2-1)(t)\|^2_{H^{2-l}}\lesssim (1+t)^{-(l+s)},
$$due to $\eqref{hg56}_1$.

\vspace{0.2cm} Next,  we prove \eqref{h124}-\eqref{hg124} for  $s\in \lf(\f{1}{2}, \f{3}{2}\rg)$. Notice that the arguments for the case $s\in \lf[0, \f{1}{2}\rg]$ can not
be applied to this case. However, observing that we have $(\sigma_0, \mathbf{u}_0, \nabla\phi_0)\in \dot{H}^{-\f{1}{2}}(\mathbb{R}^3)$ since $\dot{H}^{-s}(\mathbb{R}^3)\cap L^2(\mathbb{R}^3)\subset \dot{H}^{-s'}(\mathbb{R}^3)$ for any $s'\in[0,s]$,  we then deduce from what we have proved for \eqref{hg124} with $s=\f{1}{2}$ that the
following decay result holds:
\begin{equation}
	\label{h514} \|\nabla^l(\sigma, \mathbf{u})(t)\|^2_{H^{3-l}}+\|\nabla^{l} (\nabla\phi, \phi^2-1)(t)\|^2_{H^{2-l}}\lesssim (1+t)^{-(l+\f{1}{2})},
\end{equation}  for $l=0,1,2$.
Hence, by \eqref{h514}, we deduce from \eqref{hr42}  and \eqref{h18} that for $s\in (\f{1}{2}, \f{3}{2})$,
$$\begin{aligned}
&\mathcal{E}_{-s}(t)\lesssim \mathcal{E}_{-s}(0)+\\
& \ +\int_0^t \|(\sigma, \mathbf{u}, \phi^2-1, \nabla\phi)\|^{s-\f{1}{2}}\lf(
\|\nabla(\sigma, \mathbf{u},  \nabla\phi)\|_{H^1}+\|\nabla(\phi^2-1)\|\rg)^{\f{5}{2}-s}\sqrt{\mathcal{E}_{-s}(\tau)}d\tau\\
&\lesssim 1+\int_0^t (1+\tau)^{-\alpha}\sqrt{\mathcal{E}_{-s}(\tau)}d\tau
\lesssim 1+ \sup_{0\leq \tau\leq t}\sqrt{\mathcal{E}_{-s}(\tau)},
\end{aligned}$$where $\alpha$ is defined by
$$\alpha=\f{1}{4}\lf(s-\f{1}{2}\rg)+\f{1}{2}\lf(1+\f{1}{2}\rg)\lf(\f{5}{2}-s\rg)>1,\quad \text{since}\,\,\, s<\f{3}{2}.$$
This implies  \eqref{h124} for  $s\in (\f{1}{2}, \f{3}{2})$, that is
\begin{equation}
	\label{h515}
	\Big\|(\rho-\bar{\rho}, \mathbf{u}, \nabla\phi, \phi^2-1)(t)\Big\|_{\dot{H}^{-s}}\lesssim 1, \quad \text{for} \,\,s\in \lf(\f{1}{2}, \f{3}{2}\rg).
\end{equation}
Now that we have proved \eqref{h515}, we may repeat the arguments leading to \eqref{hg124} for  $s\in [0, \f{1}{2}]$ to prove that they hold also for  $s\in (\f{1}{2}, \f{3}{2})$.
The proof of \eqref{h124}-\eqref{hg124} for  $s\in [0, \f{3}{2})$ is completed.

\section*{Conflict of Interests}
The authors declare that there is no conflict of interest regarding the publication of this paper.

\end{document}